\def\cF{\mathcal{F}}
\def\cP{\mathcal{P}}
\def\cU{\mathcal{U}}
\def\cR{\mathcal{R}}
\def\hr{\hat{r}}
\def\hq{\hat{q}}
\def\Sb{S_{\bullet}}
\def\Sw{S_{\circ}}
\def\Eb{E_{\bullet}}
\def\Ew{E_{\circ}}
\def\tT{\tilde{T}}
\def\cM{\mathcal{M}}
\def\cQ{\mathcal{Q}}
\newtheorem{theo}{Theorem}
\newtheorem{prop}{Proposition}
\newtheorem{lem}{Lemma}
\theoremstyle{definition}
\def\cM{\mathcal{M}}
\newcommand*\mywidebar[1]{%
   \hbox{%
     \vbox{%
       \hrule height 0.5pt 
       \kern0.5ex
       \hbox{%
         \kern-0.1em
         \ensuremath{#1}%
         \kern-0.1em
       }%
     }%
   }%
}
\title[Maps of unfixed genus and blossoming trees]{Maps of unfixed genus and blossoming trees}
\author{\'Eric Fusy}
\address{CNRS/LIX, \'Ecole Polytechnique, 91120 Palaiseau, France.}
\email{fusy@lix.polytechnique.fr}
\author{Emmanuel Guitter}
\address{Universit\'e Paris-Saclay, CNRS, CEA, Institut de physique th\'eorique, 91191, Gif-sur-Yvette, France.}
\email{emmanuel.guitter@ipht.fr}
\begin{document}
\maketitle

\begin{abstract} 
We introduce bijections between families of rooted maps with unfixed genus and families of so-called blossoming trees endowed with an arbitrary forward matching of their leaves.  We first focus on Eulerian maps with controlled vertex degrees. The mapping from blossoming trees to maps is a generalization to unfixed genus of Schaeffer's closing construction for planar Eulerian maps. 
The inverse mapping relies on the existence of canonical orientations which allow to equip the maps with canonical spanning trees, as proved by Bernardi. 
Our bijection gives in particular (here in the Eulerian case) a combinatorial explanation to the striking similarity between the (infinite) recursive system of equations which determines the partition function of maps with unfixed genus (as obtained via matrix models and orthogonal polynomials) and that determining the partition function of planar maps. All the functions in the recursive system get a combinatorial interpretation as generating functions for maps endowed with particular multiple markings of their edges. This allows us in particular to give a combinatorial proof of some differential identities satisfied by these functions. We also consider face-colored Eulerian maps
with unfixed genus and derive some striking identities between their generating functions and those of properly weighted marked maps. The same 
methodology is then applied to deal with $m$-regular bipartite maps with unfixed genus, leading to similar results. The case of cubic maps is also briefly discussed. 
\end{abstract}

\section{Introduction}
\label{sec:introduction}

\subsection{Aim of the paper}

The enumeration of maps, i.e.\ cellular embeddings of graphs into surfaces, has been a constant subject of investigation since the 
seminal papers of Tutte in the 60's \cite{Tutte62a,Tutte62b,Tutte63}, with numerous combinatorial and probabilistic results coming from various counting techniques
developed over the years \cite{PDF06,Bo08,Mi14,bona15,ey16}. An instructive exercise consists in finding connections between the different enumeration approaches 
as it may lead to a better understanding of the underlying common combinatorial objects. In this spirit, it was observed a long time ago \cite{BDG02} 
that two among the various map enumeration techniques, even though unrelated a priori, present some striking and yet unexplained similarities. 

The first approach is that of random matrix integrals, which gives access to generating functions for \emph{maps with unfixed genus}.
More precisely, performing integrals over Hermitian matrices of size $N\times N$ allows to control the genus $h$ of the maps by assigning 
them a weight $N^{2-2h}$ \cite{PDF06,DGZ95}, or alternatively to control their number $F$ of faces by assigning them a weight $N^F$ \cite{LZ04}. 
The second and totally different approach relies on bijections between maps and the so-called \emph{blossoming trees}~\cite{Sc97,Sc98,BDG02,BS02,PS06,AlPo15},
which is particularly adapted to the enumeration of \emph{planar maps}, for which the associated blossoming trees are genuine plane trees
with some decorations.

Plane trees are recursive objects by essence and their generating functions may therefore in general be computed recursively.
In the case of blossoming trees associated with standard families of rooted planar maps (for instance maps with fixed vertex degrees, bipartite maps, constellations~...), 
their generating function $R_1$ may be obtained as the first term in a family of functions $(R_i)_{i\geq 1}$ entirely determined by an infinite recursive 
system of non-linear equations. For instance, in the case rooted 4-valent planar maps (with a weight $g$ per vertex) it reads (with the convention $R_0=0$)
\begin{equation*}
R_i=1+g\ \!R_i(R_{i-1}+R_i+R_{i+1}),\ \ i\geq 1.
\end{equation*}
For many families of maps, the associated recursive system turns out to be integrable and very explicit expressions~\cite{BDG03,BG12} can be obtained for 
the functions $R_i$ (this holds especially for blossoming trees in bijection with maps of bounded vertex degrees). {}From  bijections between trees and 
maps~\cite{BDG03}, the $R_i$'s also have a direct interpretation as map generating functions\footnote{Another bijection, between planar maps and so-called labelled mobiles~\cite{BDG04}, leads to a very similar interpretation; however we will not use it in our present study.}: $R_i$ enumerates two-leg planar maps with the two legs at distance less than~$i$. 
In particular, the first term $R_1$ in the family is precisely the generating function for rooted planar maps.

A very similar integrable structure emerges in the matrix model formalism when implemented via the so-called orthogonal polynomial technique. 
There a recursive system of non-linear equations is encountered for an infinite set $(r_i)_{i\geq 1}$ of formal power series which come up as ratios of 
norms for the successive orthogonal polynomials. In particular, apart from the first term $r_1$ which enumerates rooted maps with unfixed genus,
these formal series have no direct interpretation as map generating functions and  their index $i$ has no clear combinatorial meaning. 

Remarkably, for all the standard families of maps studied so far, it may be checked that the recursive system for the $r_i$ is identical to that for the $R_i$, up to
a simple elementary (yet crucial) modification, which is moreover the same for all map families. This ``universal'' modification allows 
in particular to interpret the $r_i$'s as counting series for 
the \emph{same} blossoming trees as those counted by the $R_i$'s, with a simple slight difference in their weighting. Leaves of the trees may be classified 
by their \emph{height} $h$ (also sometimes called ``depth"): leaves of height $h$ receive a weight $i+h-1$ in $r_i$ instead of the trivial weight $1$ in $R_i$,
leading to a first contribution of the trivial tree (made of a single leaf at height $1$) to $r_i$ equal to $i$ instead of $1$ for $R_i$. For instance, in the case of rooted 4-valent maps (with a weight $g$ per vertex) the system reads (with again the convention $r_0=0$)
\begin{equation*}
r_i=i+g\ \!r_i(r_{i-1}+r_i+r_{i+1}),\ \ i\geq 1.
\end{equation*}
Even though this striking resemblance between $r_i$ and $R_i$ was quoted many years ago, it found no combinatorial explanation so far in terms
of maps and this observation therefore raises a number of questions: what is, if any, the combinatorial interpretation of the new weight $i+h-1$ for the blossoming trees enumerated by $r_i$? How does it alter
the counting of the blossoming trees, which are planar objects, so as to create generating functions for maps with unfixed genus? Can we settle a 
direct bijection between (possibly decorated) blossoming trees and maps with unfixed genus? 

\medskip
The purpose of this paper is to answer some of these questions. 
More precisely, we focus here on two main families of maps: that of \emph{Eulerian maps}, which are maps where all vertices have even degrees, and that of
\emph{$m$-regular bipartite maps},
where all vertices have degree $m$ and are bicolored with no two adjacent vertices of the same color.
We show that the extra weight~$i+h-1$ per leaf of height $h$ when passing from $R_i$ to $r_i$ gets a natural explanation if we
equip the blossoming trees with arbitrary forward matchings of their leaves (instead of planar forward matchings in the planar case). 
We then establish a bijection between blossoming trees
endowed with an arbitrary forward matching and (rooted) maps with unfixed genus (Theorems \ref{thm:eul} and \ref{thm:bip}). The passage from
blossoming trees to maps may be viewed as a simple generalization of the closing constructions of \cite{Sc97,BoSc00} in the planar case. The inverse mapping
from maps to blossoming trees is more involved and relies crucially on the existence of canonical orientations on the maps, as proved by Bernardi~\cite{Be08}, which allows to equip these maps with a canonical
spanning tree.  
The above bijection between blossoming trees and maps allows us to explain combinatorially why $r_1$, 
which has a clear blossoming tree combinatorial interpretation, is the generating function for (rooted) maps with unfixed genus, hence to give a combinatorial 
interpretation to the matrix model result. 

The combinatorial interpretation of $r_i$ for $i>1$ in terms of maps is more subtle and involves the notion of \emph{marked maps}, where some of the edges are
marked and oriented, with some restrictions on the markings. We show that these maps are indeed in bijection with blossoming trees equipped with specific marked 
pairings, enumerated by $r_i$. 

Another outcome of our comparison between the matrix integral and blossoming tree approaches is a better understanding of
so called \emph{face-colored maps}, i.e. maps whose faces carry colors among a set of $N$ colors (or equivalently
receive a weight $N^F$ if they have $F$ faces). {}From the matrix model analysis, the generating function of face-colored maps has indeed a simple 
expression in terms of the $r_i$'s for $i\leq N$. {}From our new interpretation of $r_i$, the generating functions for face-colored maps are shown to coincide with the generating functions for appropriately weighted marked maps. This allows us, by some appropriate counting of marked maps, to recover the celebrated Harer-Zagier formula~\cite{HaZa86} for face-colored rooted maps with a single vertex, as well as a special case of a formula by Goulden and Slofstra~\cite{GoSl10} for face-colored rooted maps with two vertices.

As opposed to the $R_i$'s, the $r_i$'s do not seem to have a simple closed expression. On the other hand, they satisfy remarkable and simple \emph{differential identities}
with no analog for the $R_i$'s. We show that these identities may receive a direct combinatorial explanation in terms of marked maps. 

\subsection{Plan of the paper}
The paper is organized as follows: Sections~\ref{sec:recursion} to \ref{sec:moreres} deal with Eulerian maps. We recall in Section~\ref{sec:recursion}
results from the matrix model enumeration approach: in Section~\ref{sec:intrep}, we discuss the representation of the generating function $r_1$ for
(rooted) Eulerian maps with unfixed genus in terms of a real integral (the $N=1$ version of $N\times N$ matrix integrals) and derive in \ref{sec:orthpol}
a recursive system of non-linear equations (Equation~\ref{eq:recurri}) for a family $(r_i)_{i\geq 1}$ of formal power series of which $r_1$ is the first term. Returning to $N\times N$ Hermitian
matrices, we then recall in Section~\ref{sec:facecol} how to obtain two alternative expressions for the generating function of face-colored Eulerian maps in terms of the $r_i$'s
only. The comparison between these expressions yields a set of differential identities for the $r_i$'s (Equation~\ref{drivalue}) which are proved in Appendix~\ref{sec:dri}
by algebraic manipulations on the recursive system for $r_i$. 

Section~\ref{sec:blossom} deals with blossoming trees, which are defined in Section~\ref{sec:blossomgen} as particular plane trees with two kinds of leaves, 
called opening and closing. We introduce more specifically \emph{Eulerian trees} as a particular subset of blossoming trees and derive a recursive system for their generating function in Section~\ref{sec:euleriantree}.  This allows us to interpret $r_1$
as the generating function for Eulerian trees with a weight $h$ per closing leaf of height $h$.  

Section~\ref{sec:bijtreemap} makes the connection between Eulerian trees and maps. We first discuss in Section~\ref{section:alpha} orientations of map edges with prescribed outdegrees of the vertices. We
recall in particular a bijection by Bernardi between so-called \emph{minimal orientations} and spanning trees on any given map. This property is
used in Section~\ref{sec:bij_eulerian} to design a bijection
between rooted Eulerian maps and Eulerian trees endowed with a forward matching (Theorem~\ref{thm:eul}), themselves in bijection with what we call \emph{enriched Eulerian trees}, which are trees whose closing leaves carry so-called matching indices and which are clearly enumerated by $r_1$. 
Section~\ref{sec:planar} presents a purely planar version of the bijection, where crossing-vertices are added to the blossoming trees,
leading to a canonical planar representation of maps with arbitrary genus. The number of crossing-vertices in the canonical planar
representation may then be controlled by changing the weight $h$ for leaves at height $h$ by its $q$-analog $[h]_q$.

Section~\ref{sec:moreres} explores an extension of our bijection by introducing \emph{marked Eulerian maps} (which are maps with particular marked oriented edges) in 
Section~\ref{sec:marked_maps}, in bijection with Eulerian trees endowed with a marked matching.  We then show in 
Section~\ref{sec:interpr_ri} how to interpret the $r_i$'s as generating functions for such marked Eulerian maps where the marked edges receive some multiplicities,
in bijection with particular enriched Eulerian trees (so called \emph{$i$-enriched}). {}From this new interpretation of $r_i$, we may obtain yet another, now purely combinatorial 
proof of the differential identities for the $r_i$'s (Equation~\ref{drivalue}), as detailed in Appendix~\ref{sec:driproof}.
Face-colored maps are discussed in Section~\ref{sec:face-colored}, where a striking identification 
of their generating function with that of some marked maps (Equation~\ref{eq:bijtT}) is derived. We also show there how to recover the Harer-Zagier formula for face-colored rooted maps with a single vertex.

We repeat all the above analysis in Section~\ref{sec:mregular}, now for the family of $m$-regular bipartite maps with unfixed genus.
We briefly recall in Section~\ref{sec:intmregular} results for their enumeration via matrix models, leading 
 again to a recursive system for a (new) family of functions $(r_i)_{i\geq 1}$ (of which $r_1$ is the desired rooted $m$-regular bipartite map generating function),
now coupled to a new family $(q_i)_{i\geq 1}$ (Equation~\ref{eq:sysrq}). A detailed proof of the recursive system via formal integrals is presented in Appendix \ref{sec:mregularint}. The $r_i$'s again satisfy remarkable differential identities (Equation~ \ref{eq:drimbis}), proved in Appendix~\ref{sec:drim} by algebraic manipulations on the recursive system.
We then describe the relevant blossoming trees in Section~\ref{sec:blossommregular} and show that rooted $m$-regular bipartite maps are
in bijection with particular bipartite blossoming trees, called \emph{$m$-bipartite trees}, endowed with a forward matching, 
or equivalently \emph{enriched $m$-bipartite trees}
(Theorem \ref{thm:bip}), which are counted by $r_1=1+q_1$. In Section~\ref{sec:morem}, we extend this bijection to a one-to-one correspondence between marked $m$-regular bipartite maps and marked $m$-bipartite trees, leading
to an interpretation of $q_i$ as the generating function for particular marked $m$-regular bipartite maps with multiplicities. {}From this interpretation, 
we can give in Appendix~\ref{sec:proofdqim} a combinatorial proof of the differential identities satisfied by $r_i$ (Equation~\ref{eq:drimbis}). 
We end our study with a discussion of face-colored $m$-regular bipartite maps, where we obtain again a remarkable identification 
of their generating function with that of properly weighted marked maps (Equation~\ref{eq:bijtTm}), and
reproduce a formula by Goulden and Slofstra for bipartite face-colored rooted maps with two vertices of degree $m$.

Section \ref{sec:conclusion} presents a discussion on a number of side results. We briefly propose in Section~\ref{sec:3regular} a possible extension of our
bijective construction to the case of maps with arbitrary (not necessarily even) vertex degrees, with an emphasis on the case of $3$-regular maps. 
We also discuss in Section~\ref{Contfrac} a strategy to obtain non-linear differential equations for the generating functions of maps of unfixed genus and bounded vertex degrees, and 
show the existence of simple continued fraction expressions for $r_1$ in particular cases of maps with small vertex degrees. 

\section{Expressing the generating function of Eulerian maps as the first term $r_1(t)$ in a recursive system}
\label{sec:recursion}
Recall that an Eulerian map is a map where all vertices have even degrees. In particular, this guarantees the existence of an 
Eulerian tour on the map. In this section, we recall some results on the enumeration of Eulerian maps, as obtained from the matrix integral approach. In particular, we show via some integral representation that the counting series $r_1(t)$ for rooted Eulerian maps with unfixed genus, 
enumerated with a weight $t$ per edge, is the first term of a family $(r_i(t))_{i\geq 1}$ of formal series in $t$ related by an infinite set of recursive equations
(Equation \ref{eq:recurri}).

\subsection{Integral representation of Eulerian map generating functions}
\label{sec:intrep}
The generating function for Eulerian maps enumerated with a weight $g_k$ per vertex of degree $2k$ ($k\geq 1$) and a weight $t$
per edge is given formally by
\begin{equation}
h_0(t)=\frac{1}{\sqrt{2\pi}}\int_{-\infty}^{+\infty}dx\, {\rm e}^{-\frac{x^2}{2}}\, {\rm e}^{V(t,x)}\ ,
\qquad 
V(t,x)=\sum_{k\geq 1} t^k\, g_k\, \frac{x^{2k}}{2k}\ .
\label{eq:h0}
\end{equation}
In \eqref{eq:h0}, the integrand ${\rm e}^{V(t,x)}$ is understood as a formal power series in the variable $t$, whose coefficients are polynomials
in the variable $x$ and, throughout the paper, for a formal power series 
\begin{equation*}
F(t,x):=\sum_{E\geq 0} t^E\, P_E(x)
\end{equation*}
with $P_E$ a sequence of polynomials, we denote
\begin{equation*}
\frac{1}{\sqrt{2\pi}} \int_{-\infty}^{+\infty}dx\, {\rm e}^{-\frac{x^2}{2}} F(t,x):=
\sum_{E\geq 0} t^E\, \frac{1}{\sqrt{2\pi}} \int_{-\infty}^{+\infty}dx\, {\rm e}^{-\frac{x^2}{2}} P_E(x)
\end{equation*}
which is also a formal power series in $t$. 

Now it is a classical result that, as a power series, we may write $h_0(t)=1+\sum_{E\geq 1}t^E Z_E$
where $Z_E$ denotes the generating function for possibly disconnected Eulerian maps with a total of $E$ edges, 
and enumerated with suitable symmetry factors. This is a consequence of the identity 
$\frac{1}{\sqrt{2\pi}} \int_{-\infty}^{+\infty}dx\, {\rm e}^{-\frac{x^2}{2}} x^{2s}=(2s-1)!!$ which is the number of pairings on a set
of $2s$ elements (here the set consists of the half edges that are to be paired to construct the map). 
To avoid symmetry factors, we may instead consider the generating functions 
$W_E$ for \emph{rooted\footnote{i.e. with a marked corner, or equivalently
a marked oriented edge.} connected Eulerian maps} with $E$ edges, enumerated with a weight $g_k$ per vertex of degree $2k$ ($k\geq 1$).
The corresponding power series $r_1(t)$ reads
\begin{equation}
r_1(t):=1+\sum_{E\geq 1}t^E W_E=1+2t\frac{d}{dt}{\rm Log}\,  h_0(t)
\label{eq:tdth0}
\end{equation}  
with a conventional first term $1$. {}From now on, all maps will be connected unless otherwise stated.
\subsection{Orthogonal polynomials and recursion relations}
\label{sec:orthpol}
As we shall now recall, the counting series $r_1(t)$ in \eqref{eq:tdth0} is the first term of an infinite family $(r_i(t))_{i\geq 1}$ of series
which are entirely determined by a recursive set of equations. This property may be established by introducing a family 
$p_i:=p_i(t,x)$ (with $i\in \mathbb{N}$) of orthogonal polynomials as follows. Defining, for two formal power series $F(t,x)$ and $G(t,x)$ in the variable $t$ whose 
coefficients are polynomials in $x$, the scalar product\footnote{More precisely, it is a symmetric bilinear form that returns
a power series in $t$. In practice, we shall only need the further property that $\langle F | F \rangle\neq 0$ if $F$ is non-zero.}
\begin{equation*}
\langle F | G \rangle:=\frac{1}{\sqrt{2\pi}}\int_{-\infty}^{+\infty}dx\, {\rm e}^{-\frac{x^2}{2}}\, {\rm e}^{V(t,x)}\, F(t,x)\, G(t,x)\ ,
\end{equation*}
the orthogonal polynomials $p_i(t,x)$ are defined by the conditions (which determine them entirely)
\begin{equation*}
\langle p_i | p_j \rangle =h_i(t) \, \delta_{i,j}\ ,  \qquad p_i(t,x)=x^i+\sum_{k<i} a_{i,k}(t) x^k
\end{equation*}
(note that $h_0(t)$ matches its definition given by \eqref{eq:h0} since $p_0(t,x)=1$). Now clearly, since $V(t,x)$ is an even function
of $x$,  the polynomials $(-1)^ip_i(t,-x)$ satisfy the desired conditions and, by unicity, $p_i$ is even in $x$ for even $i$ and odd for odd $i$.
In particular, we deduce that $p_1(t,x)=x$.
We may then write
\begin{equation*}
x\, p_i(t,x)=p_{i+1}(t,x)+r_i(t)\, p_{i-1}(t,x)+\sum_{k\geq 2} \alpha_{i+1-2k}(t) p_{i+1-2k}(t,x)\ , \quad i\geq 1
\end{equation*}  
where the sum runs over non-negative values of $i+1-2k$ with $k\geq 2$ (we will see below that the coefficient $r_1(t)$ in
this equation is precisely the counting function defined in \eqref{eq:tdth0}). For $i=0$, we have the simplified relation
$x\, p_0(t,x)=p_1(t,x)$.
{}From the identity $\langle x\, p_i | p_h\rangle= \langle p_i |x p_h\rangle =0 $ for $i> h+1$ (since $xp_h$ is a linear combination or $p_m$'s with $m\leq h+1$), i.e for $h <i-1$, we deduce that
all the $\alpha_{i+1-2k}$ are $0$, so that we may eventually write
\begin{equation}
x\, p_i(t,x)=p_{i+1}(t,x)+r_i(t)\, p_{i-1}(t,x)\ .
\label{eq:xpi}
\end{equation}  
{}From the identity  $\langle x\, p_i | p_{i-1}\rangle= \langle p_i |x p_{i-1}\rangle$, we then deduce that 
$h_{i-1}(t) r_i(t)= h_i(t)$, hence
\begin{equation*}
r_i(t)=\frac{h_i(t)}{h_{i-1}(t)}\ , \qquad i\geq 1\ .
\end{equation*} 
As for the power series \eqref{eq:tdth0}, we have
\begingroup
\allowdisplaybreaks
\begin{align*}
2t\frac{d}{dt}{\rm Log}\,  h_0(t)&= \frac{1}{h_0(t)}\frac{1}{\sqrt{2\pi}}\int_{-\infty}^{+\infty}dx\, {\rm e}^{-\frac{x^2}{2}}\, 2t\frac{\partial}{\partial t}{\rm e}^{V(t,x)}\\
&= \frac{1}{h_0(t)}\frac{1}{\sqrt{2\pi}}\int_{-\infty}^{+\infty}dx\, {\rm e}^{-\frac{x^2}{2}}\, x\frac{\partial}{\partial x}{\rm e}^{V(t,x)}\\
&= -\frac{1}{h_0(t)}\frac{1}{\sqrt{2\pi}}\int_{-\infty}^{+\infty}dx\, \frac{d}{dx}\left(x\, {\rm e}^{-\frac{x^2}{2}}\right)\, {\rm e}^{V(t,x)}\\
&= -\frac{1}{h_0(t)}\frac{1}{\sqrt{2\pi}}\int_{-\infty}^{+\infty}dx\, {\rm e}^{-\frac{x^2}{2}}\, {\rm e}^{V(t,x)}+\frac{1}{h_0(t)}\frac{1}{\sqrt{2\pi}}\int_{-\infty}^{+\infty}dx\, x^2\, {\rm e}^{-\frac{x^2}{2}}\, {\rm e}^{V(t,x)}\\
&=-1+\frac{h_1(t)}{h_0(t)}
\end{align*}
\endgroup
where we used $x=p_1(t,x)$. This leads to the desired relation \eqref{eq:tdth0} which identifies the coefficient $r_1(t)$ in \eqref{eq:xpi}
as the generating function for rooted Eulerian maps, counted  by their number of edges.

The formal power series $r_i(t)$ satisfy an infinite system of recursion relations which determines them all order by order in $t$ and which may be obtained as follows:
we start with the identity
\begin{equation*}
\frac{\partial}{\partial x}w(t,x)+\Big(x-\sum_{k\geq 1} t^k g_{k}x^{2k-1}\Big)\, w(t,x)=0\ , \quad w(t,x):= {\rm e}^{-\frac{x^2}{2}+V(t,x)} 
\end{equation*}
which allows to write
\begingroup
\allowdisplaybreaks
\begin{align*}
\!\!\!\!\!\!\!\!\!\!\!\!\!\!\! h_{i}(t)&=\langle p_i |x\, p_{i-1}\rangle\\
&=\frac{1}{\sqrt{2\pi}}\int_{-\infty}^{+\infty}dx\, w(t,x)\, p_i(t,x)\, x\, p_{i-1}(t,x)\\
&=\frac{1}{\sqrt{2\pi}}\int_{-\infty}^{+\infty}dx\, \Bigg(-\frac{\partial}{\partial x}w(t,x)+\Big(\sum_{k\geq 1} t^k g_{k}x^{2k-1}\Big)\, w(t,x) \Bigg) p_i(t,x)\, p_{i-1}(t,x)\\
&=\frac{1}{\sqrt{2\pi}}\int_{-\infty}^{+\infty}dx\, w(t,x)\, \Bigg(\frac{\partial}{\partial x}p_i(t,x)\, p_{i-1}(t,x)+p_i(t,x)\, \frac{\partial}{\partial x}p_{i-1}(t,x)\\
& \qquad  \qquad \qquad \qquad \qquad \qquad \qquad \qquad \qquad+\Big(\sum_{k\geq 1} t^k g_{k}x^{2k-1}\Big)\, p_i(t,x)\, p_{i-1}(t,x)\Bigg)\\
&=\langle \frac{\partial}{\partial x}p_{i}| p_{i-1}\rangle+\underbrace{\langle p_i | \frac{\partial}{\partial x}p_{i-1}\rangle}_{=0}+
\sum_{k\geq 1} t^k g_{k} \langle p_{i-1} | x^{2k-1} p_i\rangle\\
&=i\, h_{i-1}(t)+\sum_{k\geq 1} t^k g_{k} \langle p_{i-1} | x^{2k-1} p_i\rangle\\
\end{align*}
\endgroup
where the last identity follows from the relation $\frac{\partial}{\partial x}p_i(t,x)=i\, p_{i-1}(t,x)+\sum_{k<i} k\, a_{i,k}(t) x^{k-1}- i \sum_{k<i-1} a_{i-1,k}(t) x^k$.
We end up with the relation
\begin{equation*}
r_i(t)=i+\sum_{k\geq 1} t^k g_{k} \frac{1}{h_{i-1}(t)}\langle p_{i-1} | x^{2k-1} p_i\rangle\ .
\end{equation*}
Using repeatedly the relation $x\, p_h(t,x)=p_{h+1}(t,x)+r_h(t)\, p_{h-1}(t,x)$ for $h\geq 1$ and $x\, p_0(t,x)=p_1(t,x)$, and following the variation of the index $h$, the quantity 
$\frac{1}{h_{i-1}(t)}\langle p_{i-1} | x^{2k-1} p_i\rangle$ may be interpreted as the weighted sum over the set $\mathcal{P}_k^{(i)}$ of Dyck paths
$\wp$ of length $2k-1$ (whose height is
the running index $h$ when repeating the relation) from height $i$ to height $i-1$
where each path is enumerated with a weight $r_h(t)$ for each descending step $h\to h-1$. We deduce the recursion relations
\begin{equation}
r_i(t)=i+\sum_{k\geq 1} t^k g_{k} \sum_{\wp \in \mathcal{P}_k^{(i)}} 
\prod_{\hbox{\tiny{descending steps}}\atop h\to h-1\ \hbox{\tiny{of}}\ \wp} r_h(t)\ , \quad i\geq 1\ .
\label{eq:recurri}
\end{equation}
To summarize, the generating function $r_1(t)=1+\sum_{E\geq 1}t^E W_E$ for rooted connected Eulerian maps is obtained as the first term $i=1$
in the above recursive system defining the $r_i$'s and may be obtained order by order in $t$ from this system.

As a simple example, let us consider the case of $4$-regular maps, i.e. take\footnote{For $4$-regular maps, the number of vertices is half the number of edges
so we may decide to take $g_2=1$.} $g_k=\delta_{k,2}$. The above recursive system
then reduces to 
 \begin{equation*}
r_i(t)=i+t^2\, r_i(t)\big(r_{i+1}(t)+r_i(t)+r_{i-1}(t)\big)\ , \qquad i\geq 1
\end{equation*}
with the convention $r_0(t)=0$. At first orders in $t$, this yields
\begin{equation*}
r_i(t)=i+3i^2\, t^2+6i(3i^2+1)t^4+27i^2(5i^2+6)t^6+18i(63i^4+174i^2+35)t^8+\ldots
\end{equation*}
and in particular, from the $i=1$ series, 
\begin{equation*}
W_2= 3\ , \quad 
W_4= 24 \ , \quad 
W_6= 297 \ , \quad 
W_8= 4896\ . \quad 
\end{equation*}
\subsection{Generating functions of face-colored maps}
\label{sec:facecol}
The above results correspond to the $N=1$ version of a more general framework involving integrals over $N\times N$ Hermitian matrices. The effect of replacing
the real integrals above by matrix integrals is then to give weight $N$ to each face of the map 
(this weight comes from the summation over matrix element indices). If we denote by $Z_{E,F}$ the generating function for (not necessarily connected) Eulerian maps with $E$ edges and $F$ faces, 
enumerated with a weight $g_k$ per vertex of degree $2k$ (and appropriate symmetry factors), we now get from the Wick formula the integral 
expression\footnote{In the matrix integral approach, it is customary
to also give a weight $N$ to the $V$ vertices of the map and $N^{-1}$ to its $E$ edges so that the map gets a total weight $N^{V-E+F}=N^{2-2h}$ if
it has genus $h$, see~\cite{PDF06,DGZ95}. This is done by modifying the term $ \exp\left(-{\rm Tr}\left(H^2/2+V(t,H)\right)\right)$ in the matrix integral into 
$ \exp\left(-N{\rm Tr}\left(H^2/2+V(t,H)\right)\right)$ (and adapting the normalization prefactor).
We will not discuss this alternative weighting here.} 
\cite[Chap.3]{LZ04}:
\begin{equation*}
H_0(t,N):=\sum_{E,F\geq 1}t^E N^F\,  Z_{E,F}=\frac{2^{N(N-1)/2}}{(2\pi)^{N^2/2}}\int dH\, {\rm e}^{-{\rm Tr}\left(\frac{H^2}{2}+V(t,H)\right)}\ ,
\end{equation*}
where the integral is over $N\times N$ Hermitian matrices with $dH$ the Lebesgue measure $dH=\prod\limits_{i} dH_{i,i} \prod\limits_{i<j}
d{\rm Re}(H_{i,j})d{\rm Im}(H_{i,j})$ and $V$ as in \eqref{eq:h0}.
The matrix integral may then be computed by use of the orthogonal polynomials
$p_i(t,x)$ of the previous section,
with the result \cite[Sect.3.5]{LZ04}: 
\begin{equation}
H_0(t,N)=N!\prod_{i=1}^{N}h_{i-1}(t)=N! \prod_{i=1}^{N}\big(r_i(t)\big)^{N-i}\times \big(h_0(t)\big)^N
\label{eq:Zval}
\end{equation}
with $h_i(t)$ and $r_i(t)$ as in Section~\ref{sec:orthpol}.
As before, we may consider instead the associated generating functions $W_{E,F}$ for rooted connected Eulerian maps. We then have
\begin{equation}
\begin{split}
T(t,N)&:=\sum_{E,F\geq 1}t^E N^F\,  W_{E,F}\\
&=2t\frac{d}{dt}{\rm Log}\,  H_0(t,N) \\
&=N\times 2t\frac{d}{dt}{\rm Log}\,  h_0(t)+\sum_{i=1}^N(N-i)\, 2t\frac{d}{dt}{\rm Log}\,  r_i(t)\\
&=N \big(r_1(t)-1\big)+\sum_{i=1}^N(N-i)\, 2t\frac{d}{dt}{\rm Log}\,  r_i(t)
\end{split}
\label{eq:Tval}
\end{equation}
where we used \eqref{eq:tdth0} in the last line.
Since $N$ is necessarily an integer in the above formula, the quantity $T(t,N)$ may be interpreted as the counting series of \emph{face-colored} rooted
Eulerian maps, i.e. maps whose faces are colored with color set $\{1,2,\ldots,N\}$.

Another simpler expression for $T(t,N)$ can be obtained by identifying rooted Eulerian maps with Eulerian maps with a marked vertex of degree $2$
and its incident half-edges distinguished (sometimes called two-leg maps, the identification simply amounts to put a vertex of degree $2$ in the middle of the root edge). This yields 
\begin{equation*}
T(t,N)=\frac{1}{H_0(t,N)}\frac{2^{N(N-1)/2}}{(2\pi)^{N^2/2}} \int dH\, {\rm Tr}(H^2) \, {\rm e}^{-{\rm Tr}\left(\frac{H^2}{2}+V(t,H)\right)}-N^2\ ,
\end{equation*}
where ${\rm Tr}(H^2)$ accounts for the marked vertex of degree $2$ and the subtracted term removes the contribution $N^2$ of the trivial map
made of a single vertex of degree $2$ with an incident loop.  
In the orthogonal polynomial 
formalism, this yields the alternative expression 
 \begin{equation}
 \begin{split}
T(t,N)&= \sum_{i=1}^N\frac{\langle p_{i-1} | x^2\, p_{i-1}\rangle}{h_{i-1}(t)}-N^2\\
&=\sum_{i=1}^N \big(r_{i}(t)+r_{i-1}(t)\big)-N^2\\
&=S(t,N)+S(t,N-1)\ , \qquad S(t,N):=\sum_{i=1}^N \big(r_i(t)-i\big) \ .
\label{eq:Tvalbis}
\end{split}
\end{equation} 
In particular, the identification\footnote{More precisely, we consider for $N\geq 1$ the quantity $(T(t,N+1)-T(t,N))-(T(t,N)-T(t,N-1))=T(t,N+1)-2T(t,N)+T(t,N-1)$, with the convention $T(t,0)=0$. By~\eqref{eq:Tval} it equals  $2t\frac{d}{dt}{\rm Log}\, r_N(t)$, and by~\eqref{eq:Tvalbis} it equals $r_{N+1}(t)-r_{N-1}(t)-2$.} of the two expressions \eqref{eq:Tval} and \eqref{eq:Tvalbis} for $T(t,N)$ for some arbitrary $N$  
implies the following
remarkable identity satisfied by the functions $r_i(t)$:
\begin{equation}
2t\frac{d}{dt}{\rm Log}\, r_i(t)=r_{i+1}(t)-r_{i-1}(t)-2\ , \quad i\geq 1
\label{drivalue}
\end{equation}
(with $r_0(t)=0$ as before). 
This identity is proved in Appendix \ref{sec:dri} by verification from the recursive system \eqref{eq:recurri} itself
(without recourse to face-colored maps). We also present another, purely combinatorial proof of \eqref{drivalue} in Appendix \ref{sec:driproof} 
in terms of so-called marked maps and blossoming trees.
Note finally that for $N=1$, the identity \eqref{eq:Tvalbis} yields directly $T(t,1)=r_1(t)-1$, which provides yet another proof that $r_1(t)$ is  the generating function for
(uncolored) rooted Eulerian maps. 

\section{Interpreting $r_1(t)$ as a counting series for some blossoming trees}
\label{sec:blossom}
In this section, we show that, as solutions of the recursive system \eqref{eq:recurri}, the $r_i$'s have a natural interpretation as 
counting series for properly weighted ``Eulerian trees'', defined below as a particular family of blossoming trees. This holds in particular for $r_1$ itself, 
which corresponds to the counting of so-called ``balanced'' Eulerian trees with appropriate leaf weights.
\subsection{Blossoming trees: generalities}
\label{sec:blossomgen}
All trees considered here are plane trees. The degree of a vertex is its number of neighbours. The leaves are thus the vertices of degree $1$, the other vertices are called \emph{nodes}. 
A \emph{rooted tree} is a tree with a marked leaf. It is called \emph{nodeless} if it has just one edge (connecting two leaves),
otherwise the node adjacent to the root leaf is called the \emph{root node}.  A \emph{blossoming tree} $T$ is a rooted tree with two kinds of leaves: \emph{opening leaves} and \emph{closing leaves}, such that there are as many opening as closing leaves, and 
 the root leaf is opening. The \emph{leaf-path} $w(T)$ of $T$ is the path, starting at height $1$, obtained from a clockwise walk around the tree (with the outer face on the left) starting and ending at (but not including) the root leaf, where an up-step (resp. down-step) is drawn when passing along an opening (resp. closing) leaf. If $T$ has $2s$ leaves, then $w(T)$ has length $2s-1$ and ends at height $0$. A closing leaf is said to have \emph{height} $h$ 
if the corresponding down-step in $w(T)$ descends from $h$ to $h-1$. The tree $T$ is called \emph{balanced} if the height of every leaf is positive\footnote{Otherwise
stated, in a clockwise walk around the tree starting at (and including) the root leaf, the number of encountered opening leaves is always larger or equal to that of
closing leaves.},
so that $w(T)$ is a Dyck path\footnote{Here and throughout the paper, a Dyck path is a walk on $\mathbb{N}$ with elementary steps in $\{+1,-1\}$ and with prescribed 
starting and ending points.} of length $2s-1$ from height $1$ to height $0$. 
More generally, for $i\geq 1$, we let $w^{(i)}(T)$ be the vertical shift of $w(T)$ that starts at height~$i$.
The tree $T$ is called \emph{$i$-balanced} if $w^{(i)}(T)$ is in $\cP_s^{(i)}$, the set of Dyck paths of length $2s-1$ from height $i$ to height $i-1$. A closing leaf is said to have \emph{$i$-height} $h$
if the corresponding down-step in $w^{(i)}(T)$ descends from height $h$ to $h-1$ (with $h\geq 1$ if $T$ is $i$-balanced). Note that the $i$-height of a closing leaf 
is nothing but $(i-1)$ plus its height, and that a $1$-balanced tree is nothing but a balanced tree.

An \emph{Eulerian tree} is a blossoming tree such that each node has even degree, and every node $v$ of degree $2k$
has $k-1$ adjacent opening leaves (not counting the root leaf if $v$ is the root node). It is easy to check that an Eulerian tree satisfies the required condition that the number of opening and closing leaves are equal (indeed, if $n_k$ denotes the number of nodes of degree $2k$, the number of edges is $1+\sum_k (2k-1)n_k$ hence the number of vertices is $2+\sum_k (2k-1)n_k$; since the tree has $\sum_k n_k$ nodes it has a total of $2+2\sum_k (k-1)n_k$ leaves, and clearly by definition the number of opening leaves is $1+\sum_k (k-1)n_k$).

\subsection{Counting series of Eulerian trees}\label{sec:euleriantree}
\label{sec:blossomcount}
For $i\geq 1$, we let $\hr_i(t)$ be the counting series (in the variable $t$ and weight-parameters $(g_k)_{k\geq 1}$ and $(z_h)_{h\geq 1}$) of $i$-balanced Eulerian trees, where each node of degree $2k$
is weighted by $t^kg_k$, and each closing leaf of $i$-height $h$ is weighted by $z_h$ (the parameter $t$ is conjugate to the total half-degree of the nodes).  We claim that the series
$\hr_1,\hr_2,\cdots$ satisfy the recursion relations
\begin{equation}
\hr_i(t)=z_i+\sum_{k\geq 1} t^k g_{k} \sum_{\wp \in \mathcal{P}_k^{(i)}} 
\prod_{\hbox{\tiny{descending steps}}\atop h\to h-1\ \hbox{\tiny{of}}\ \wp} \hr_h(t)\ , \quad i\geq 1\ .
\label{eq:recurhri}
\end{equation}
Clearly the nodeless Eulerian tree is represented by the term $z_i$. 
For $T$ an $i$-balanced Eulerian tree with a root node $v$ of degree $2k$, let $e_0,e_1,\ldots,e_{2k-1}$ be the 
sequence of incident edges in clockwise order around $v$, with $e_0$ the one leading to the root leaf. 
Then the \emph{root path} of $T$ is the path $\wp$ of length $2k-1$, starting at height $1$ and ending at height $0$,   
 such that for $j\in[1..2k-1]$ the $j$th step of $\wp$
 is an up-step if $e_j$ leads to an opening leaf, and is a down-step otherwise. For $i\geq 1$ we let $\wp^{(i)}$ be the vertical shift of $\wp$ that starts at height $i$ (and ends at height $i-1$).  
Let $j_1<\cdots<j_k$ be the indices
of the down-steps of $\wp$, and let $T_{1},\ldots,T_{k}$ be the subtrees attached at each of the edges $e_{j_1},\ldots,e_{j_k}$. Then clearly $w^{(i)}(T)$ is obtained from $\wp^{(i)}$ by replacing, for each $r\in[1..k]$, the down-step $d_r$ at position $j_r$ by the path $w^{(h_r)}(T_{r})$, with $h_r$ the starting height of $d_r$, see Figure~\ref{fig:hri} for an illustration. In this substitution, note that $T$ is $i$-balanced iff $\wp^{(i)}\in \cP_k^{(i)}$ and
$T_r$ is $h_r$-balanced for each $r$ (note that the case where $T_r$ reduces to the nodeless  Eulerian tree, such as $T_2$ in Figure~\ref{fig:hri}, corresponds to the situation where $e_{j_r}$ leads to a closing leaf of $i$-height $h_r$). This readily yields~\eqref{eq:recurhri}. 

The series $r_i(t)$, as defined in~\eqref{eq:recurri}, are a specialization of $\hr_i(t)$, obtained by setting $z_h=h$. Thus $r_i(t)$ can be interpreted as the counting series of $i$-balanced Eulerian trees, where each closing leaf of $i$-height $h$ receives a weight $h$, or equivalently is decorated by 
an integer index $\iota\in [0..h-1]$.  

\begin{figure}
\begin{center}
\includegraphics[width=14cm]{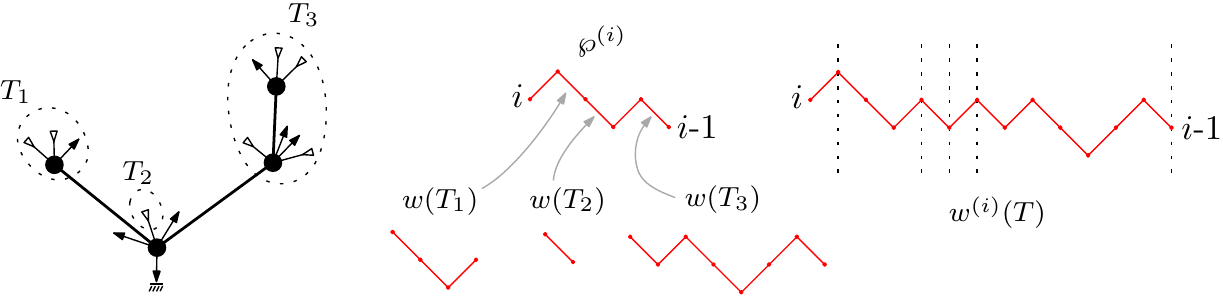}
\end{center}
\caption{An Eulerian tree $T$ (left side, opening leaves are drawn as outgoing black arrows, closing leaves as ingoing white arrows). The shifted leaf-path $w^{(i)}(T)$ is obtained from the shifted root path $\wp^{(i)}$ of $T$ where the descending steps are replaced by the (properly shifted) leaf-paths of the subtrees attached to the root node.}
\label{fig:hri}
\end{figure}

\section{Bijection between some blossoming trees and rooted Eulerian maps}
\label{sec:bijtreemap}
This section presents a bijection between rooted Eulerian maps and so-called ``enriched'' Eulerian trees or, equivalently, Eulerian trees
endowed with a so-called ``forward matching'' of their leaves (Theorem \ref{thm:eul}). This gives a combinatorial explanation of why the solution $r_1(t)$ of the system 
\eqref{eq:recurri}, which clearly enumerates the 
above decorated Eulerian trees, is also the generating function for rooted Eulerian maps.
\subsection{Minimal orientations with prescribed outdegrees}\label{section:alpha}
For a rooted map $M$ (map with a marked corner) of arbitrary genus, the \emph{root vertex} $v_0$ is the one incident to the root corner, and the \emph{root half-edge} is the half-edge just after the root corner in clockwise order around $v_0$. The \emph{root edge} is the edge containing the root half-edge. Let $V$ and $E$ be the vertex-set and edge-set of $M$. For $\alpha:V\to\mathbb{N}$, an \emph{$\alpha$-orientation} of $M$ is an orientation of the edges of $M$ such that every vertex $v$ has outdegree $\alpha(v)$. The function~$\alpha$ is called \emph{feasible} if $M$ admits an $\alpha$-orientation. For $v'$ a vertex of $M$, an orientation of $M$ is called \emph{$v'$-accessible} if for every vertex $v$ there exists an oriented path from $v$ to $v'$. It is known that, for a given feasible~$\alpha$, either all $\alpha$-orientations are $v'$-accessible or none. In the first case, the  feasible function~$\alpha$ is called \emph{$v'$-accessible}.
 For $S\subseteq V$ let $\alpha(S):=\sum_{v\in S}\alpha(v)$ and let $E_S$ be the set of edges with both ends in $S$. It is known (see e.g.~\cite[Sect.2.1]{Fe03} and~\cite[Lem.3]{BeFu12}) that a function $\alpha$ is feasible and $v'$-accessible iff 
$\alpha(V)=|E|$ and for every $S\subseteq V\backslash v'$ one has $\alpha(S)>|E_S|$ (heuristically, it means that there exists at least one oriented edge
that allows to go away from $S$). A feasible function $\alpha$ is called \emph{root-accessible} if it is $v_0$-accessible.

\begin{figure}
\begin{center}
\includegraphics[width=14cm]{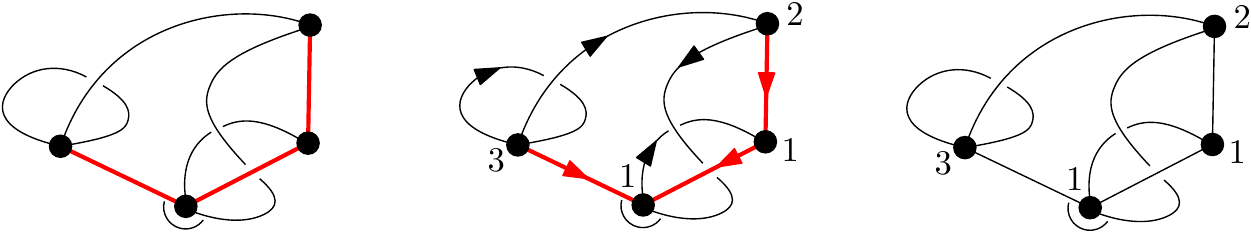}
\end{center}
\caption{Bernardi's bijection $\Gamma_M$ (for $M$ a rooted map): from a spanning tree $T$ of $M$ (left drawing, tree-edges are shown red) to a root-accessible feasible function $\alpha$ for $M$ (right drawing). The middle drawing shows the orientation $\Phi_M(T)$ and the vertex-outdegrees.}
\label{fig:bij_bernardi}
\end{figure}  

In \cite{Be08}, Bernardi gives a nice bijection $\Gamma_M$ between the spanning trees of $M$ and the feasible root-accessible functions $\alpha$ for $M$. 
For $T$ a spanning tree of $M$, the edges of $T$ are called \emph{internal} and the edges of $M\backslash T$ are called
\emph{external}. The half-edges of external edges are ordered according to a clockwise walk around $T$ starting at the root corner. Thus every external edge has a first half-edge and a second half-edge (the first one appearing before the second according to the above ordering). Let $\Phi_M(T)$ be the orientation of $M$ where internal edges are directed toward the root along $T$, and every external edge has its first half-edge outgoing and second half-edge ingoing: clearly $\Phi_M(T)$
is root-accessible (following the oriented internal edges from any vertex leads to the root). The mapping $\Gamma_M$ associates to~$T$ the outdegree sequence of $\Phi_M(T)$, see Figure~\ref{fig:bij_bernardi}. The fact that $\Gamma_M$ 
is bijective is equivalent to the following statement.

\begin{theo}[Bernardi (item 5 in Theorem 41 of~\cite{Be08})]
\label{thm:bernardi}
Let $M$ be a rooted map with vertex-set~$V$. For every feasible root-accessible function $\alpha:V\to\mathbb{N}$ there is a unique spanning tree $T$ of $M$ such that $\Phi_M(T)$ is an $\alpha$-orientation. That orientation is called 
 the \emph{minimal} $\alpha$-orientation of $M$.  
\end{theo}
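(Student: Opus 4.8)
The plan is to establish the bijectivity of $\Gamma_M$ by exhibiting a two-sided inverse, and the cleanest route is to show that for each feasible root-accessible $\alpha$ there exists a \emph{unique} spanning tree $T$ with $\Phi_M(T)$ being an $\alpha$-orientation. First I would clarify the logical structure: the criterion recalled just before the statement says that a feasible $\alpha$ is root-accessible iff $\alpha(V)=|E|$ and $\alpha(S)>|E_S|$ for every $S\subseteq V\setminus v_0$, and moreover that when $\alpha$ is feasible the property of being $v_0$-accessible is uniform across all $\alpha$-orientations. So the target $\alpha$-orientation, if it comes from some $\Phi_M(T)$, is automatically root-accessible; the content to be proved is existence and uniqueness of the \emph{tree} $T$.

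The core of the argument is to characterize, among all $\alpha$-orientations $\cO$ of $M$, those that arise as $\Phi_M(T)$ for a spanning tree $T$. My approach would be to pin down which edges play the role of internal (tree) edges: given an $\alpha$-orientation, I would look for a canonical spanning tree realizing it. A natural candidate is to define $T$ via the minimality condition the theorem itself names. The standard way to make ``minimal'' precise is through the lattice structure on the set of $\alpha$-orientations, where one reverses the orientation of clockwise-oriented cycles (this is the Propp--Felsner circulation/flip structure on $\alpha$-orientations). Concretely, I would introduce the flip operation that reverses a directed cycle bounding a face-like region in clockwise order, show that the set of $\alpha$-orientations forms a distributive lattice under the induced order, and identify the unique minimal element. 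The key claim then is: an $\alpha$-orientation is minimal in this sense if and only if it equals $\Phi_M(T)$ for a spanning tree $T$, and that $T$ is forced. To prove one direction I would start from minimality and build $T$ by taking, for each vertex $v\neq v_0$, the appropriate outgoing tree-edge dictated by root-accessibility, verifying acyclicity and connectedness so that $T$ is indeed a spanning tree and that the external edges acquire the first-half-edge-outgoing labeling matching $\Phi_M(T)$. For the converse, I would show directly that $\Phi_M(T)$ admits no clockwise-directed reversible cycle, i.e. it is already minimal, so that minimality singles out at most one preimage.

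The main obstacle I expect is establishing uniqueness rigorously, i.e. that two distinct spanning trees $T\neq T'$ cannot yield the \emph{same} $\alpha$-orientation and, dually, that the minimal $\alpha$-orientation in each lattice cannot fail to be of the form $\Phi_M(T)$. The delicate point is that $\Phi_M$ depends on the clockwise walk order of the external half-edges around $T$, so comparing $\Phi_M(T)$ and $\Phi_M(T')$ requires tracking how the first/second half-edge assignment shifts when $T$ changes; a symmetric-difference argument ($T\triangle T'$ contains a cycle or a cocycle) combined with the accessibility inequalities $\alpha(S)>|E_S|$ should force a contradiction. Since this is precisely item 5 of Bernardi's Theorem 41 in \cite{Be08}, for the present paper I would either cite that result directly or, if a self-contained argument is wanted, carry out the lattice-minimality identification sketched above; the heavy combinatorial lifting on the flip lattice is what makes the statement nontrivial, and I would lean on \cite{Be08} and the accessibility characterization of \cite[Sect.2.1]{Fe03},\cite[Lem.3]{BeFu12} rather than reproving it from scratch.
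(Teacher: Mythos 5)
The paper does not prove this statement at all: it is imported verbatim from Bernardi's work (item 5 of Theorem 41 in \cite{Be08}), and the surrounding text only describes how to \emph{compute} the minimal $\alpha$-orientation and the tree $T$ (via a joint traversal and cycle-reversal procedure). So the part of your proposal that says ``cite that result directly'' is exactly what the paper does, and is the right call here.

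However, the self-contained argument you sketch as the alternative has a genuine gap for the setting of the theorem, which is maps of \emph{arbitrary genus}. The characterization of the minimal $\alpha$-orientation as the unique one without counterclockwise (or clockwise, depending on convention) cycles, together with the Propp--Felsner distributive-lattice structure on $\alpha$-orientations under cycle flips, is a \emph{planar} phenomenon; the paper itself flags this by citing \cite{PS06,Be07} only for the planar case and then saying that for arbitrary genus the construction in \cite{Be08} is different. On a higher-genus surface the set of $\alpha$-orientations does not carry a distributive lattice with a unique minimal element in this way (homologically nontrivial directed cycles obstruct it), so ``minimal in the flip order'' does not single out a unique orientation, and the equivalence ``lattice-minimal $\iff$ of the form $\Phi_M(T)$'' is not available. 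A second, related gap: even granting an orientation, recovering $T$ from it is not a local choice of ``the appropriate outgoing tree-edge at each vertex'' --- the internal/external status of an edge in $\Phi_M(T)$ depends on the clockwise-walk order around $T$, which itself depends on $T$; Bernardi resolves this circularity by computing $T$ and $O_{\rm min}$ jointly through the traversal, which is precisely the content you would have to reconstruct. In short: keep the citation, drop (or heavily rework) the lattice argument if a self-contained proof is ever needed in positive genus.
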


Let us comment on how the minimal $\alpha$-orientation is computed, since it is a key ingredient in our bijections from maps to blossoming trees. 
For $M$ a rooted planar map, the minimal $\alpha$-orientation $O_{\rm min}$ is the unique $\alpha$-orientation of $M$ with no counterclockwise cycle~\cite{PS06,Be07}, and the spanning tree $T$ is computed from a certain traversal procedure applied to $O_{\rm min}$. For $M$ a rooted map of  arbitrary genus, as explained in~\cite{Be08}, the minimal $\alpha$-orientation $O_{\rm min}$ and the corresponding spanning tree $T$ of $M$  are computed jointly starting from a given $\alpha$-orientation $O$, using an adapted traversal procedure and cycle-reversal operations. Precisely, for $h$ an half-edge, ${\rm opp}(h)$ denotes the opposite half-edge on the same edge and $\sigma(h)$ denotes the next half-edge after $h$ in clockwise order around the incident vertex. The traversal of $M=(V,E)$ consists of $2|E|$ steps, where at each step $k\in[0..2|E|-1]$ a new half-edge $h_k$ is considered, starting with $h_0$ the root half-edge. The operations when considering $h_k$ are as follows, where a half-edge is called \emph{visited} if it is in $\{h_0,\ldots,h_{k-1}\}$ and an edge is called \emph{visited} if at least one of its two half-edges is visited.
\begin{itemize}
\item 
If $h_k$ is outgoing and ${\rm opp}(h_k)$ is unvisited, we move to $h_{k+1}:=\sigma(h_k)$.
\item
If $h_k$ is outgoing and ${\rm opp}(h_k)$ is visited, we move to $h_{k+1}:=\sigma({\rm opp}(h_k))$.
\item 
If $h_k$ is ingoing and ${\rm opp}(h_k)$ is visited, we move to $h_{k+1}:=\sigma(h_k)$. 
\item
If $h_k$ is ingoing and ${\rm opp}(h_k)$ is unvisited, then there are two cases, with $e$ the edge containing $h_k$: if there is a directed cycle $C$ of unvisited edges passing by $e$ then we reverse the orientations of the edges on $C$ and move to $h_{k+1}:=\sigma(h_k)$,  otherwise we move to $h_{k+1}:=\sigma({\rm opp}(h_k))$ and declare the edge $e$ as an internal edge.
\end{itemize}
 The procedure outputs $T$ as the set of internal edges, and $O_{\rm min}$ as the  orientation obtained after the last step (the order $h_0,\ldots,h_{2|E|-1}$ of the half-edges corresponds to a clockwise walk around $T$ starting at the root corner). 
Clearly the complexity of each step is of order at most $|E|$ so that the overall time complexity is of order at most $|E|^2$. 

\medskip
We will need the following lemma in the next section (for $k=1$), and also later on.

\begin{figure}
\begin{center}
\includegraphics[width=4cm]{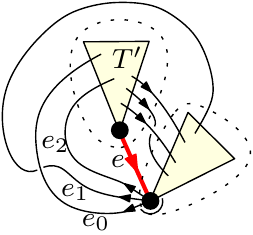}
\end{center}
\caption{The situation in the proof of Lemma~\ref{lem:outroot}.}
\label{fig:min_lemma}
\end{figure}

\begin{lem}\label{lem:outroot}
Let $M$ be a rooted map and $\alpha$ a feasible root-accessible function for $M$. Let $v_0$ be the root vertex, $d$ its degree,  and $h_0,\ldots,h_{d-1}$ the incident half-edges in clockwise order around $v_0$, with $h_0$ the root half-edge. 
For $k\in[1..d]$, assume there is an $\alpha$-orientation $O$ of $M$ such that $h_0,\ldots,h_{k-1}$ are outgoing. Then $h_0,\ldots,h_{k-1}$ are also outgoing in the minimal $\alpha$-orientation $O_{\rm min}$ of $M$.  
\end{lem}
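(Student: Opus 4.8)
We want to show that if $h_0,\ldots,h_{k-1}$ (the first $k$ half-edges in clockwise order around the root vertex $v_0$) can be simultaneously outgoing in *some* $\alpha$-orientation $O$, then they are outgoing in the *minimal* $\alpha$-orientation $O_{\min}$.

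Let me recall what "minimal" means. By Bernardi's theorem (Theorem 41, item 5), $O_{\min} = \Phi_M(T)$ for the unique spanning tree $T$. For planar maps, $O_{\min}$ is the unique $\alpha$-orientation with no counterclockwise cycle. More generally, the set of $\alpha$-orientations of $M$ forms a distributive lattice under cycle reversals, and $O_{\min}$ is the minimal element.

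**Key structural fact: the lattice of $\alpha$-orientations.**

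Any two $\alpha$-orientations differ by reversing a set of directed cycles. The minimal orientation $O_{\min}$ is characterized as the one reachable from any orientation by repeatedly reversing "clockwise" cycles (in the genus case, via the traversal procedure with cycle-reversal as described).

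Actually, the cleanest approach: $O_{\min}$ is minimal in the sense that it has no *clockwise* directed cycle passing through visited edges appropriately — but let me think about what invariant is preserved.

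**The approach I would take.**

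The plan is to use the cycle-reversal structure. Two $\alpha$-orientations $O$ and $O_{\min}$ differ by reversing directed cycles. Starting from $O$, we reach $O_{\min}$ by a sequence of cycle reversals (reversing clockwise cycles to move "down" in the lattice, reaching the minimum).

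The key observation: **an outgoing half-edge at the root vertex cannot be flipped to ingoing by these downward reversals.** Specifically, consider the first $k$ half-edges $h_0,\ldots,h_{k-1}$ around $v_0$. A directed cycle $C$ that, when reversed, would flip $h_j$ (for $j < k$) from outgoing to ingoing must *use* the edge of $h_j$ in the direction *into* $v_0$.

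So first, I would establish: **reversing a clockwise cycle (the operation that decreases toward $O_{\min}$) never turns an outgoing half-edge among $h_0,\ldots,h_{k-1}$ into an ingoing one.**

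Here is the geometric heart. Walk clockwise around $v_0$ starting at the root corner. The half-edges $h_0, \ldots, h_{k-1}$ are the first $k$. Suppose a directed cycle $C$ passes through $v_0$ using two of its half-edges, one ingoing and one outgoing. Reversing $C$ flips both. For this reversal to turn some outgoing $h_j$ ($j<k$) into ingoing, the cycle must arrive at $v_0$ along $h_j$'s edge (making it ingoing) and leave along some other half-edge. I want to argue that such a cycle is *counterclockwise* (i.e., an "upward" reversal), so it is never used in descending to $O_{\min}$.

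**The main obstacle and how I'd handle it.**

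The difficulty is making precise the "clockwise/counterclockwise at the root" dichotomy in the genus setting, where global orientation of cycles is not well-defined, only the local rotation at each vertex. The right tool is the traversal procedure itself. I would argue directly via the traversal described just before the lemma.

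Concretely: run the traversal computing $O_{\min}$ starting from the orientation $O$ (in which $h_0,\ldots,h_{k-1}$ are outgoing). The traversal starts at $h_0$ and processes half-edges in the order of a clockwise walk around the emerging tree. I would show **by induction on the traversal step** that the half-edges $h_0,\ldots,h_{k-1}$ remain outgoing throughout. The cycle-reversal operations only occur when the current half-edge $h_\ell$ is *ingoing* with unvisited opposite; since each $h_j$ ($j<k$) starts outgoing, for it to become ingoing some cycle reversal must pass through its edge in the wrong direction. I would check that the cycle $C$ reversed at a given step consists of *unvisited* edges and is attached to the current half-edge; because $h_0$ is the very first half-edge visited and $h_1,\ldots,h_{k-1}$ follow immediately in the clockwise order around $v_0$ (and are outgoing, hence trigger the first or third bullet, moving forward without reversal), these edges get visited early and in a controlled way. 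The crucial point is that once the edge of $h_j$ is visited as outgoing, no later cycle reversal can flip it, because reversible cycles consist entirely of *unvisited* edges.

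So the skeleton of the proof is:

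1. Recall $O_{\min}$ is computed by the traversal starting from any $\alpha$-orientation $O$; start from the given $O$ with $h_0,\ldots,h_{k-1}$ outgoing.
2. Observe the traversal begins at $h_0$ and the rotation $\sigma$ around $v_0$ visits $h_0,h_1,\ldots$ in order.
3. Show that when each $h_j$ ($j<k$) is first considered, it is outgoing (first/third bullet applies), so it is moved past without reversal and its edge becomes visited while outgoing.
4. Use the invariant that cycle reversals only involve unvisited edges to conclude these edges, once visited as outgoing from $v_0$, stay outgoing.
5. Conclude $h_0,\ldots,h_{k-1}$ are outgoing in $O_{\min}$.

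The genuinely delicate step is (3)–(4): I must verify that the traversal really does process $h_0,\ldots,h_{k-1}$ consecutively (or at least visits each while outgoing *before* any reversal could touch its edge), and that no reversal performed on behalf of a *later* half-edge can sneak back and flip one of these early edges. The figure (Figure~\ref{fig:min_lemma}) suggests the intended picture is exactly this local configuration at $v_0$, so I expect the argument to hinge on a careful reading of how $\sigma$ cycles through the root half-edges together with the "unvisited edges only" property of reversible cycles.
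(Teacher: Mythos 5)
Your strategy is sound and it is genuinely different from the paper's. You track the traversal algorithm dynamically: started from the given orientation $O$, the first $k$ steps consider exactly $h_0,\ldots,h_{k-1}$ in order, because each is outgoing and its opposite half-edge cannot already be visited (that would force its edge to be a loop at $v_0$ with both half-edges outgoing in $O$, which is impossible), so the first bullet applies each time and the walk advances by $\sigma$ around $v_0$ --- note it is the \emph{first} bullet, not ``first or third'' as you wrote, the third being for ingoing half-edges. No reversal occurs during these $k$ steps, and afterwards these $k$ edges are visited, hence immune to all later cycle reversals, which by construction involve only unvisited edges; this closes your ``delicate'' steps (3)--(4). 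The one thing you lean on is that the traversal, started from an \emph{arbitrary} $\alpha$-orientation $O$, outputs $O_{\rm min}$, i.e.\ that its output is independent of the starting orientation; this is a nontrivial fact of Bernardi that the paper quotes but does not prove. The paper's own proof avoids it entirely: arguing by contradiction from the static description $O_{\rm min}=\Phi_M(T)$, it takes $j<k$ minimal with $h_j$ ingoing in $O_{\rm min}$, observes that $h_0,\ldots,h_{j-1}$ then lie on external edges $e_0,\ldots,e_{j-1}$ while $h_j$ lies on an internal edge $e$, and computes the \emph{demand} (number of ingoing cut edges, an invariant of all $\alpha$-orientations) of the component $S'$ of $T\backslash e$ on the far side of $e$: every cut edge of $S'$ other than the $e_m$'s leaves $S'$, so the demand is exactly the number of $e_m$'s entering $S'$, which forces $e$ to leave $S'$ in \emph{every} $\alpha$-orientation where $e_0,\ldots,e_{j-1}$ are outgoing at $v_0$, contradicting the hypothesis on $O$. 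Your route is shorter if one takes the algorithm's correctness as a black box; the paper's route is self-contained given only Theorem~\ref{thm:bernardi} and the definition of $\Phi_M$.
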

\begin{proof}
Assume the statement does not hold, and let $j<k$ be such that (in the orientation $O_{\rm min}$) $h_0,\ldots,h_{j-1}$ are outgoing and $h_j$ is ingoing at $v_0$. Then necessarily $h_0,\ldots,h_{j-1}$ are parts of external edges denoted $e_0,\ldots,e_{j-1}$, while $h_j$ is the ingoing part of an internal edge~$e$ (it cannot be the 
ingoing part of an external edge since the other half-edge would be $h_m$ for some $m<j$ and this would make it impossible to have $h_m$ and $h_j$ both
outgoing in $O$) . 
Let $T'$ be the component of $T\backslash e$ that contains the origin of $e$, and let $S'$ be the set of vertices that are in $T'$.    
The \emph{cut} for a subset $S\subset V$ is the set of edges between $S$ and $V\backslash S$, and the \emph{demand} of $S$
is the number of edges of the cut that go from $V\backslash S$ to $S$. The demand of $S$ is the same
in every $\alpha$-orientation as it is equal to $\alpha(V\backslash S)-|E_{V\backslash S}|$. 
Then, as illustrated in Figure~\ref{fig:min_lemma}, the edges in the cut for $S'$ and different from $e_0,\ldots,e_{j-1}$ all go from $S'$ to $V\backslash S'$. 
Hence, if we let $F$ be the set of edges among $e_0,\ldots,e_{j-1}$ that are in the cut for $S'$, then the demand of $S'$ is $|F|$. Hence in every $\alpha$-orientation such that $e_0,\ldots,e_{j-1}$ are outgoing at $v_0$ (such as $O$ itself), the $|F|$ edges that contribute to the demand of $S'$ are those of $F$, and thus $e$ has to be ingoing at $v_0$ for the orientation $O$, yielding a contradiction.   
\end{proof}

\begin{figure}
\begin{center}
\includegraphics[width=14cm]{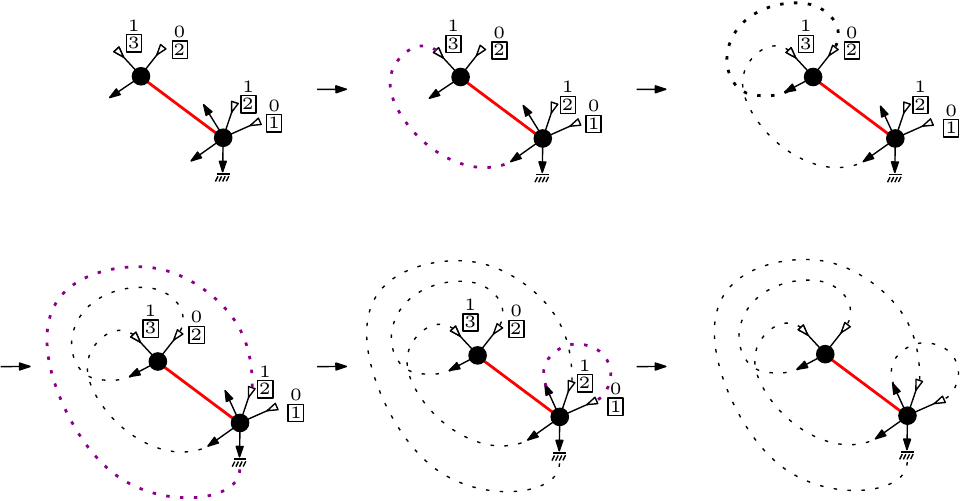}
\end{center}
\caption{{}From a balanced blossoming tree (here Eulerian) endowed with a matching assignment (for each closing leaf, its height is shown framed, above it is the matching-index) to the same blossoming tree endowed with a forward matching. 
}
\label{fig:bij_matching}
\end{figure}

\subsection{Application to Eulerian maps}\label{sec:bij_eulerian}
For $T$ a balanced blossoming tree, a \emph{matching-assignment} of $T$ is the assignment to each closing leaf $c$ of height $h$ of an integer $\iota(c)\in[0..h-1]$, which is called the \emph{matching-index} of $c$. An \emph{enriched blossoming tree} is a  balanced blossoming tree endowed with a matching-assignment.     
As we have seen in Section~\ref{sec:euleriantree}, $r_1(t)$ is the counting series of enriched Eulerian trees.   
 As an application of the previous section we are going to prove that such trees are in bijection with rooted Eulerian maps. 
Recall that the leaves of a blossoming tree $T$  are ordered according to a clockwise walk around the tree, starting with the root leaf. A \emph{forward matching} of $T$ is a matching of opening leaves with closing leaves, such that for each pair the opening leaf appears before the closing leaf. Note that then the tree $T$ is necessarily balanced.  As a first step, we argue that for $T$ a balanced blossoming tree, the matching assignments of $T$ may be identified\footnote{This is an adaptation to our setting of a well known construction to encode a matching by a decorated Dyck path, see~\cite{Fl80,MeVi94} and references therein.} with the forward matchings of $T$. 
Indeed, from a matching-assignment of $T$, we may construct a forward matching of $T$ step by step by treating the closing leaves
in their order of appearance in a clockwise walk starting from the root, see Figure~\ref{fig:bij_matching}. Every time we visit a closing leaf $c$, the height $h$ of $c$ corresponds to the number of opening leaves $o_1,\ldots,o_h$ that appear before $c$ and are not yet matched; if $c$ has matching-index $\iota$ then it is matched with $o_{h-\iota}$.

\begin{figure}
\begin{center}
\includegraphics[width=14.5cm]{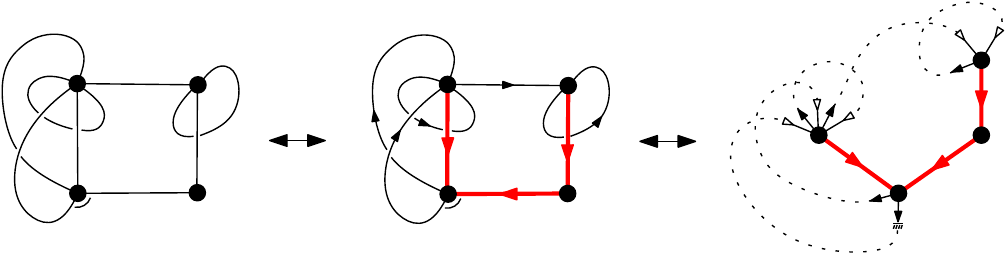}
\end{center}
\caption{Left: a rooted Eulerian map $M$. Middle: the minimal Eulerian orientation of $M$ (with the associated spanning tree in red). Right: cutting each external edge in its middle and taking as the root leaf the one at the end of the root half-edge, one obtains an Eulerian tree endowed with a forward matching.}
\label{fig:bij_eulerian}
\end{figure}

\bigskip
Now we show that Eulerian trees endowed with a forward matching are in bijection with rooted Eulerian maps, a consequence of the results of the previous section. For $M$ a rooted Eulerian map with vertex set $V$, we let
 $\alpha$ be the function that assigns to every vertex its half degree. For this function $\alpha$, an $\alpha$-orientation is called \emph{Eulerian}. Clearly the function $\alpha$ is feasible and
 root-accessible: this can be checked by the general existence criterion, or from the existence of an Eulerian tour. Note also that no edge is rigid (i.e., with the same direction in all Eulerian orientations), again due to the existence of an Eulerian tour, which can be reversed. In particular, the root half-edge can be
 chosen as outgoing. Let $O$ be the minimal Eulerian orientation of $M$, with $T$ the associated spanning tree (i.e., such that $\Phi_M(T)=O$).  By Lemma~\ref{lem:outroot}, the root half-edge is going out of the root vertex. We may then cut each external edge into two half-edges, thereby creating two leaves: the one at the end of the outgoing (resp. ingoing) half-edge is considered as an opening (resp. closing) leaf. The opening leaf at the end of the root half-edge is taken as the root leaf, see  Figure~\ref{fig:bij_eulerian}. The resulting tree $T'$ is easily checked to be an Eulerian tree, and it is endowed with a forward matching (as provided
 by the external edges). Conversely, for $T'$ an Eulerian tree endowed with a forward matching, we orient the edges of $T'$ toward the root, except for the edges leading to an opening leaf $o$, which we orient toward $o$. Then the matched leaves can be merged, each matched pair giving an external edge. The resulting map is clearly a rooted Eulerian map endowed with an Eulerian orientation $O$, and moreover if we let $T$ be the subtree of $T'$ induced by the nodes (i.e., excluding the leaves and their incident edges), then 
 $O=\Phi_M(T)$, hence $O$ is the minimal Eulerian orientation of $M$.    
 
\bigskip
To summarize, we obtain:
\begin{theo}
\label{thm:eul}
The following families are in bijection:
\begin{itemize}
\item{enriched Eulerian trees,}
\item{Eulerian trees endowed with a forward matching,}
\item{rooted Eulerian maps.}
\end{itemize}
The number of nodes of degree $2k$ in the first two families is preserved and corresponds to the number of vertices of degree $2k$ in the third family.
In particular the total half-degree $E$ of the nodes in the first two families, and of the vertices in the third family is preserved (this is also the number
of edges in the third family). The three families are
enumerated by $r_1(t)$ where $t$ is conjugate to $E$ and each node (resp. vertex) of degree $2k$ is weighted by $g_k$. 
\end{theo}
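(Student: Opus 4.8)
The plan is to read the theorem as a chain of two bijections, \emph{enriched Eulerian trees} $\leftrightarrow$ \emph{Eulerian trees endowed with a forward matching} $\leftrightarrow$ \emph{rooted Eulerian maps}, and then to track the statistics separately. The first correspondence is essentially the matching-assignment $\leftrightarrow$ forward-matching identification already sketched before the theorem, which I would promote to a genuine bijection by exhibiting its inverse explicitly: given a forward matching of a balanced tree, process the closing leaves in clockwise order; when reaching a closing leaf $c$ of height $h$, forwardness guarantees that its partner lies among the $h$ currently unmatched opening leaves $o_1,\ldots,o_h$ preceding $c$, and recording the rank of that partner from the top yields a well-defined matching-index $\iota(c)\in[0..h-1]$. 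One then checks that this map and the construction $\iota\mapsto$ (match $c$ with $o_{h-\iota}$) are mutually inverse. Since neither construction alters the underlying plane tree, both restrict to a bijection on Eulerian trees.

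For the second and main bijection, the forward (map-to-tree) direction takes a rooted Eulerian map $M$, sets $\alpha(v)$ to be the half-degree of $v$ (feasible and root-accessible by the existence of an Eulerian tour), takes the minimal Eulerian orientation $O=\Phi_M(T)$ granted by Theorem~\ref{thm:bernardi}, invokes Lemma~\ref{lem:outroot} with $k=1$ to force the root half-edge to be outgoing in $O$, and then cuts each external edge to create an opening leaf (outgoing end) and a closing leaf (ingoing end). I would verify three points: (i) each node keeps even degree, since cutting external edges does not change vertex degrees; (ii) a node of degree $2k$ carries exactly $k-1$ opening leaves away from the root, because its outdegree in $O$ is $k$ and, in $\Phi_M(T)$, each non-root node has exactly one outgoing internal half-edge (toward its parent in $T$) while the root node has none but contributes its root leaf, so the defining node condition of an Eulerian tree holds; (iii) the matching induced by the external edges is forward, because in $\Phi_M(T)$ the first (outgoing) half-edge of each external edge precedes its second (ingoing) half-edge in the clockwise walk around $T$, which is precisely the leaf order of the resulting tree~$T'$.

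The reverse (tree-to-map) direction orients all tree edges toward the root except the leaf-edges at opening leaves (oriented away from the root), then merges each matched pair into an external edge, producing a map $M$ with an Eulerian orientation $O$. The crux, which I expect to be the main obstacle, is to verify that $O=\Phi_M(T)$ where $T$ is the subtree induced by the nodes: one must check that internal edges indeed point to the root and that forwardness of the matching makes the first half-edge of each external edge the outgoing one, so that $O$ falls exactly in the image of the $\Phi_M$ construction. Granting this, Theorem~\ref{thm:bernardi} immediately gives that $O$ is the \emph{minimal} Eulerian orientation. The two directions are then mutually inverse: starting from $M$, the minimal orientation and its spanning tree are canonical, so cutting and re-gluing returns $M$; starting from $T'$, the orientation we build is already minimal, hence re-applying the map-to-tree map recovers the same spanning tree and the same leaf structure, giving back $T'$.

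Finally, for the statistics, nodes correspond to vertices with their degrees preserved, so the number of degree-$2k$ nodes in the first two families equals the number of degree-$2k$ vertices in the third; and the total half-degree $\sum_k k\,n_k$ equals $\tfrac12\sum_k 2k\,n_k$, which is the number of edges $E$ by the handshake identity (each external edge arises from one matched opening–closing pair, each internal edge from a node–node edge of $T'$). For the enumeration, recall from Section~\ref{sec:euleriantree} that $r_1(t)=\hr_1(t)$ specialized at $z_h=h$ counts balanced Eulerian trees where each closing leaf of height $h$ is weighted by $h$, equivalently decorated by an index $\iota\in[0..h-1]$, that is, enriched Eulerian trees, with weight $g_k$ per degree-$2k$ node and $t$ conjugate to $E$. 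The chain of bijections transports this enumeration to all three families, yielding the claimed common generating function $r_1(t)$.
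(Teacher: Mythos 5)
Your proposal is correct and follows essentially the same route as the paper: the same two-step chain (matching-assignment $\leftrightarrow$ forward matching, then cutting/closing via the minimal Eulerian orientation), resting on the same key ingredients, namely Theorem~\ref{thm:bernardi} and Lemma~\ref{lem:outroot}. Your checks (i)--(iii) and the verification that the rebuilt orientation equals $\Phi_M(T)$ simply make explicit what the paper leaves as ``easily checked''.
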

\vspace{.2cm}

\noindent{\bf Remark.} In the case where all matching-indices are $0$ (and only in that case) the associated rooted Eulerian map is planar, and our construction coincides with Schaeffer's bijection~\cite{Sc97} (in its reformulation relying on Eulerian orientations, as given in~\cite[Sect.3.1.2]{Fu07} and~\cite[Sect.3.1]{AlPo15}) between balanced Eulerian trees and rooted planar Eulerian maps. In the next section we will explain that, even in the unfixed genus case, one can see our construction as an application of Schaeffer's bijection. 

\subsection{A planar version of the bijection}\label{sec:planar}

\begin{figure}
\begin{center}
\includegraphics[width=14cm]{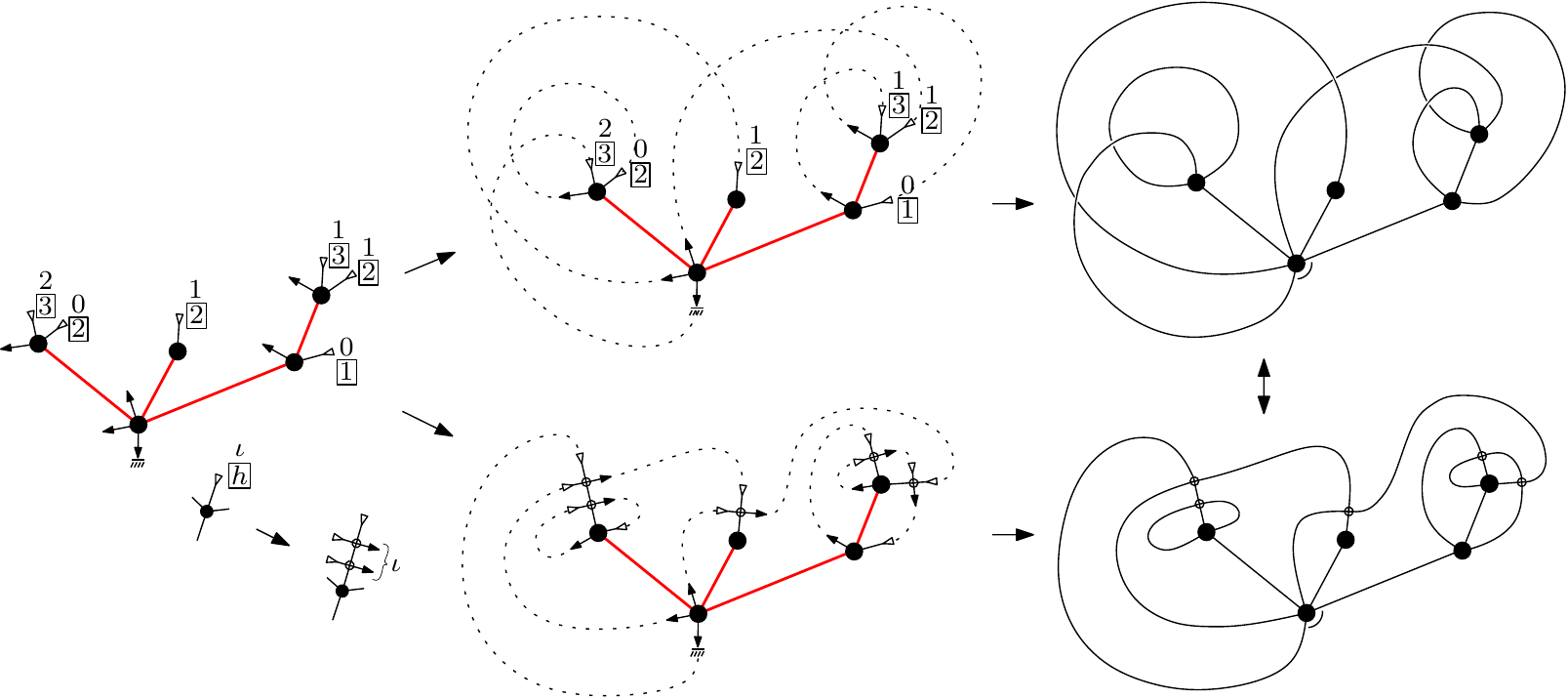}
\end{center}
\caption{{}From an enriched Eulerian tree to a rooted Eulerian map: the top-row shows the mapping as described in Section~\ref{sec:bij_eulerian}, the bottom-row shows the planar version.}
\label{fig:planar}
\end{figure}

As shown in Figure~\ref{fig:planar}, the bijection from enriched Eulerian trees to rooted Eulerian maps can alternatively be performed as follows. For $T$ an enriched Eulerian tree and $c$ a closing leaf of matching-index $\iota$, we call \emph{leaf-extension} the operation of replacing $c$ by a branch of length $\iota+1$ ending
 with a closing leaf, so that each of the $\iota$ internal vertices on the branch, which are called \emph{crossing-vertices},  carries a closing leaf on the left side and an opening leaf on the right side (seeing the branch as extended upward).
Let $T'$ be the balanced Eulerian tree obtained from $T$ after the leaf-extension of every closing leaf.  We may then perform Schaeffer's bijection to $T'$ (i.e., the bijection of Section~\ref{sec:bij_eulerian} where all closing leaves of $T'$ are considered to have matching-index $0$). What we obtain is a planar rooted Eulerian map $M'$ which exactly corresponds to $M$, upon seeing crossing-vertices as locations where two edges cross in the planar representation of $M$ (that $M'$ indeed yields $M$ in this reduction can be checked step-by-step when treating closing leaves in clockwise order around $T$, see Figure~\ref{fig:equiv}). 
This gives in particular a canonical planar representation of rooted Eulerian maps of any genus.

Note that if we let $[i]_q:=1+\cdots+q^{i-1}=\frac{1-q^i}{1-q}$ be the $q$-analog of $i$,  
 and let $r_i(t,q)$ be the series specified by the recursion relations\footnote{That is, $r_i(t,q)$ is the specialization of the series $\hr_i(t)$ given in 
\eqref{eq:recurhri} by setting $z_h=[h]_q=\sum\limits_{\iota=0}^{h-1}q^\iota$.}
\[
r_i(t,q)=[i]_q+\sum_{k\geq 1} t^k g_{k} \sum_{\wp \in \mathcal{P}_k^{(i)}} 
\prod_{\hbox{\tiny{descending steps}}\atop h\to h-1\ \hbox{\tiny{of}}\ \wp} r_h(t,q)\ , \quad i\geq 1\ ,
\] 
 then by the planar reformulation of the bijection, $r_1(q,t)$ is the counting series of rooted Eulerian maps with weight $q$ per crossing-vertex (i.e., the power of $q$ is the `crossing number' of the canonical planar representation of the map). This gives a unified formula covering both the planar case (by setting $q=0$)
and the unfixed genus case (by setting $q=1$). 

\vspace{.2cm}

\noindent{\bf Remark.} Our construction can thus be considered as an extension of Schaeffer's closure bijection~\cite{Sc97} to arbitrary rooted Eulerian maps, with control on a crossing-number parameter. This parameter does not seem to have a simple relation to the genus (except that it is zero iff the genus is zero). A different extension, with control on the genus, has been recently given in~\cite{Le19}: for any fixed genus $h$ it encodes a rooted Eulerian map of genus $h$ as a certain unicellular map of the same genus, endowed with a planar forward matching. On the other hand, the bijection for Eulerian planar maps based on labelled mobiles~\cite{BDG04} also extends to any fixed genus~\cite{CMS09,Ch09}, and we do not know if it could be given an alternative extension to unfixed genus explaining that rooted Eulerian maps are counted by $r_1(t)$.

\begin{figure}
\begin{center}
\includegraphics[width=12cm]{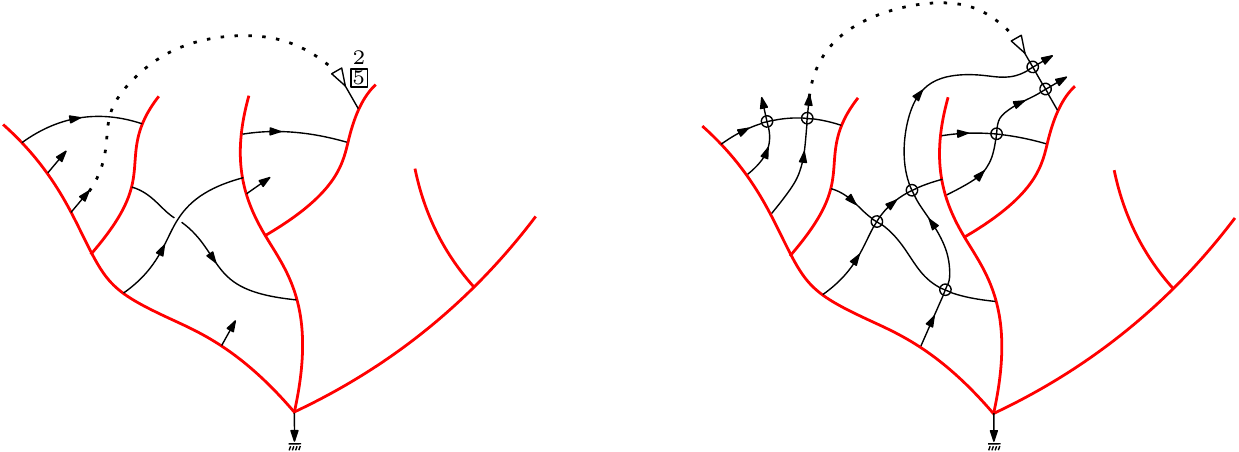}
\end{center}
\caption{The situation when matching a closing leaf (left-side in the first formulation, right-side in the planar reformulation). On the right, black edges
without arrows correspond to leaf-extensions.}
\label{fig:equiv}
\end{figure}

\section{More combinatorial results for Eulerian maps}
\label{sec:moreres}
We now extend the bijection of Theorem \ref{thm:eul} to Eulerian maps with marked edges, in correspondence with Eulerian trees endowed with a marked matching. This will allow us to give a combinatorial interpretation to the $r_i$'s for $i>1$ as counting series for particular ``admissible marked Eulerian maps with multiplicities". {}From 
\eqref{eq:Tvalbis}, we will then, after some manipulations, identify the (properly defined) counting series for marked maps without multiplicities with particular generating functions
for rooted face-colored Eulerian maps (Equation \ref{eq:bijtT}).

\subsection{Marked maps, marked matchings in blossoming trees}\label{sec:marked_maps}
We first extend to the so-called \emph{marked} setting the bijection between rooted Eulerian maps and Eulerian trees endowed with a forward matching. A \emph{marked map} $M$ is a rooted map where some edges are marked and oriented. An orientation of $M$ is called \emph{compatible} if it agrees with the fixed orientation of the marked edges. A compatible  orientation of $M$ is called \emph{root-accessible} if for each vertex $v$ there exists an oriented path from $v$ to the root vertex that avoids the marked edges. Let $V$ be the vertex-set of $M$, and $v_0$ the root vertex. A function $\alpha:V\to\mathbb{N}$ is called \emph{feasible} if there exists a compatible $\alpha$-orientation of $M$. It is called \emph{root-accessible} if there exists an $\alpha$-orientation that is compatible and root-accessible. In that case let $M'$ be the (unmarked) map obtained from $M$ by deleting the marked edges (the root corner of $M'$ is taken as the unique corner whose angular area contains the angular area of the root corner of $M$). Let $\alpha'$ be the function from $V$ to $\mathbb{N}$ such that for every $v\in V$, $\alpha'(v)$ is equal to $\alpha(v)$ minus the number of marked edges having $v$ as origin. Clearly the compatible $\alpha$-orientations of $M$ are in 1-to-1 correspondence with the $\alpha'$-orientations of $M'$. In addition the fact that $\alpha$ is feasible and root-accessible for $M$ ensures that $\alpha'$ is feasible and root-accessible for $M'$; this also ensures that every compatible $\alpha$-orientation of $M$ is root-accessible. The compatible $\alpha$-orientation $O$ associated to the minimal $\alpha'$-orientation $O'$ of $M'$ is called the \emph{canonical $\alpha$-orientation} of $M$. The spanning tree of $M'$ (and of $M$) such that $\Phi_{M'}(T)=O'$ is called the \emph{canonical spanning tree} of $M$ (for the function $\alpha$).

A marked Eulerian map is called \emph{admissible} if it admits an Eulerian orientation that is compatible, root-accessible, and where the root half-edge (which is possibly on a marked edge) is outgoing. 
For $a\geq 0$ we let $\cM_a$ be the set of admissible marked Eulerian maps having $a$ marked edges (note that $\cM_0$ is just the set of rooted Eulerian maps). 

\medskip
On the other hand, for $T$ a blossoming tree, a \emph{marked matching} of $T$ is a matching of the opening leaves with the closing leaves where some of the pairs are marked, and such that for each unmarked pair the opening leaf appears before the closing leaf in a clockwise walk around $T$ starting at the root.
A \emph{marked blossoming tree} is a blossoming tree endowed with a marked matching, For $a\geq 0$ we let $\cU_a$ be the set of marked Eulerian trees having $a$ marked pairs (note that $\cU_0$ is just the set of Eulerian trees endowed
with a forward matching). 

\begin{figure}
\begin{center}
\includegraphics[width=14cm]{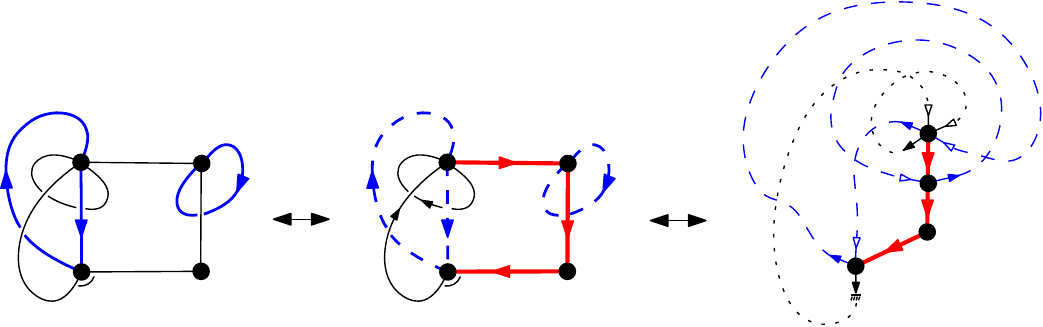}
\end{center}
\caption{Left: an admissible marked Eulerian map $M$ (with $3$ marked edges, shown blue). 
Middle: the canonical Eulerian orientation of $M$ (the edges of the canonical spanning tree are shown red).  Right: cutting each edge in its middle and taking as the root leaf the one at the end of the root half-edge, one obtains an Eulerian tree endowed with a marked matching (with $3$ marked matched pairs).}
\label{fig:bij_eulerian_marked}
\end{figure}

The bijection described in Section~\ref{sec:bij_eulerian} can be easily generalized as a bijection between $\cM_a$ and $\cU_a$, for any $a\geq 0$. Let $M\in\cM_a$, endowed with its canonical Eulerian orientation, and let $T$ be the canonical spanning  tree of $M$. 
Note that by Lemma~\ref{lem:outroot} the root half-edge of $M$ has to be outgoing (there is an easy case distinction whether the root edge of $M$ is marked or not). We can then cut all external edges (edges not in $T$, note that this includes all marked edges) at their middles, the end of the outgoing (resp. ingoing) half-edge being considered as an opening (resp. closing) leaf. 
The root leaf is taken as the opening leaf resulting from cutting the root edge of $M$ (since it goes out of $v_0$, the root leaf is adjacent to $v_0$).  What we obtain is clearly a marked Eulerian tree $T'$  having $a$ marked pairs (corresponding to the $a$ marked edges of $M$), see Figure~\ref{fig:bij_eulerian_marked}. 

Conversely, for $T'\in\cU_a$, very similarly as for $a=0$, we orient all edges of $T'$ toward the root, except for the edges incident to an opening leaf, which we orient toward the leaf. We then merge each matched pair of leaves into an (oriented) edge, which we consider as a marked edge if the pair is marked. We obtain an admissible 
 marked Eulerian map $M\in\cM_a$ endowed with its canonical Eulerian orientation, and such that the subtree $T$ of $T'$ induced by the nodes is the canonical spanning tree of $M$.   
       
\medskip
To summarize, we obtain:
\begin{prop}
The following families are in bijection for any $a\geq 0$:
\begin{itemize}
\item{marked Eulerian trees whose number of marked pairs is {\large$a$},}
\item{admissible marked Eulerian maps whose number of marked edges is {\large$a$}.}
\end{itemize}
The number of nodes of degree $2k$ in the first family corresponds to the number of vertices of degree $2k$ in the second family. 
\end{prop}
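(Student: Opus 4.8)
The statement to establish is the bijection of the final Proposition: for every $a\geq 0$, marked Eulerian trees with $a$ marked pairs are in bijection with admissible marked Eulerian maps with $a$ marked edges, degree-preserving on nodes/vertices. The plan is to lift the $a=0$ bijection of Section~\ref{sec:bij_eulerian} to the marked setting, reusing the machinery that was set up just before the statement (the notion of a \emph{canonical} $\alpha$-orientation and \emph{canonical spanning tree} attached to a feasible root-accessible $\alpha$ for a marked map). The key observation making everything go through is the correspondence described earlier: deleting the marked edges of $M$ reduces the compatible $\alpha$-orientations of $M$ to the $\alpha'$-orientations of the unmarked map $M'$, so that Bernardi's Theorem~\ref{thm:bernardi} can be applied to $M'$ to single out a canonical object.

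\textbf{The forward map ($M\mapsto T'$).} First I would take $M\in\cM_a$ and equip it with its canonical Eulerian orientation (the compatible $\alpha$-orientation associated to the minimal $\alpha'$-orientation $O'$ of $M'$), together with the canonical spanning tree $T$. The crucial point to verify is that the root half-edge is outgoing: this follows from Lemma~\ref{lem:outroot} applied to $M'$ with $k=1$, combined with the admissibility hypothesis which guarantees a compatible Eulerian orientation with outgoing root half-edge; one handles separately the case where the root edge is itself marked (so it is deleted in $M'$) and the case where it is internal or external in $M'$. Once this is secured, I would cut every external edge of $M$ (which, as noted, includes all $a$ marked edges) at its midpoint, declaring the end of the outgoing half-edge an opening leaf and the end of the ingoing half-edge a closing leaf, and take the opening leaf coming from the root edge as the root leaf. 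That the resulting $T'$ is an Eulerian tree is checked exactly as in the $a=0$ case (each node keeps even degree and the local opening-leaf count is governed by the half-degree function $\alpha$); the $a$ cuts of marked edges furnish the $a$ marked pairs of the matching.

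\textbf{The inverse map ($T'\mapsto M$).} For $T'\in\cU_a$ I would orient all edges of $T'$ toward the root, reorient toward the leaf those edges incident to an opening leaf, then merge each matched pair of leaves into one oriented edge, marking it precisely when the pair was marked. The resulting $M$ is Eulerian by construction, and I must argue it lands in $\cM_a$: the produced orientation is compatible (marked edges inherit their fixed direction from the merge), it is Eulerian, its root half-edge is outgoing, and it is root-accessible because every vertex reaches the root along tree-oriented edges. The heart of the verification is that the two constructions are mutually inverse, and for this I would show that the subtree $T$ of $T'$ induced by the nodes is exactly the canonical spanning tree of $M$, i.e.\ $\Phi_{M'}(T)=O'$ is the \emph{minimal} $\alpha'$-orientation of $M'$. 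This reduces the marked statement to the already-established $a=0$ bijection applied to $M'$: once the marked edges are deleted, the forward-matching structure on the unmarked pairs is precisely the one handled in Theorem~\ref{thm:eul}, and the canonical spanning tree coincides with the minimal one.

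\textbf{Main obstacle.} The routine parts (degree bookkeeping, Euler-tour feasibility, the leaf-cutting/merging being inverse on the underlying plane-tree data) carry over verbatim from Section~\ref{sec:bij_eulerian}. The genuinely delicate step is confirming that passing to $M'$ interacts correctly with \emph{minimality}: one must check that the canonical orientation of $M$—defined through the minimal $\alpha'$-orientation of the \emph{reduced} map—is the orientation reconstructed from $T'$, so that no spurious cycle-reversal among the marked edges spoils the correspondence. Concretely this is where the definitions of \emph{compatible} and \emph{root-accessible} for marked maps, and the 1-to-1 correspondence between compatible $\alpha$-orientations of $M$ and $\alpha'$-orientations of $M'$, must be used carefully; given that these were proved just before the statement, the argument is short, but it is the conceptual crux that makes the marked generalization more than a cosmetic relabeling of the $a=0$ proof.
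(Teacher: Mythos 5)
Your proposal is correct and follows essentially the same route as the paper: equip $M$ with its canonical Eulerian orientation and canonical spanning tree, use Lemma~\ref{lem:outroot} (with the case distinction on whether the root edge is marked) to ensure the root half-edge is outgoing, cut the external edges (which include all marked edges) to get the marked tree, and invert by orienting toward the root and merging matched pairs, checking that the node-induced subtree is the canonical spanning tree. The "main obstacle" you single out — that the canonical orientation defined via the minimal $\alpha'$-orientation of the reduced map $M'$ agrees with the orientation reconstructed from $T'$ — is exactly the point the paper relies on (via the 1-to-1 correspondence between compatible $\alpha$-orientations of $M$ and $\alpha'$-orientations of $M'$), so there is no divergence of method.
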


\subsection{Interpretation of $r_i(t)$ for general $i$}\label{sec:interpr_ri}
A \emph{marked map with multiplicities} 
is a marked map $M$ where each marked edge $e$ carries a multiplicity $\mu(e)\in\mathbb{N}^*$. 
For $i\geq 1$ we let $\cR_i$ be the family of admissible marked Eulerian maps with multiplicities, such that the total multiplicity, i.e. the sum of the 
multiplicities of all the marked edges, is (strictly) less than~$i$. We show here that $r_i(t)-i$ is the counting series of $\cR_i$. 

An \emph{$i$-enriched} blossoming tree is an $i$-balanced blossoming tree where each closing leaf $c$ of $i$-height $h$ 
carries an index $\iota(c)\in[0..h-1]$. 
As we have seen in Section~\ref{sec:euleriantree}, $r_i(t)-i$ is the counting series of $i$-enriched Eulerian trees with at least one node.   
 For such a tree $T$, using an operation quite similar to that in Section~\ref{sec:planar}, we may extend the root-leaf of $T$ into a branch $B$ of length $i$, such that $B$ ends with an opening leaf (the new root-leaf), and each of the $i-1$ internal vertices on the branch carries an opening leaf on the left side and a closing leaf on the right side (seeing the branch as extended downward, see Figure~\ref{fig:ri_map}). We let $T'$ be the Eulerian tree thus obtained. The $i-1$ internal vertices of $B$, and opening and closing leaves on each side, are called \emph{artificial}. Note that for each closing leaf of $T$, its $i$-height in $T$ becomes its height in $T'$, so that $T'$ is balanced. 

\begin{figure}
\begin{center}
\includegraphics[width=14cm]{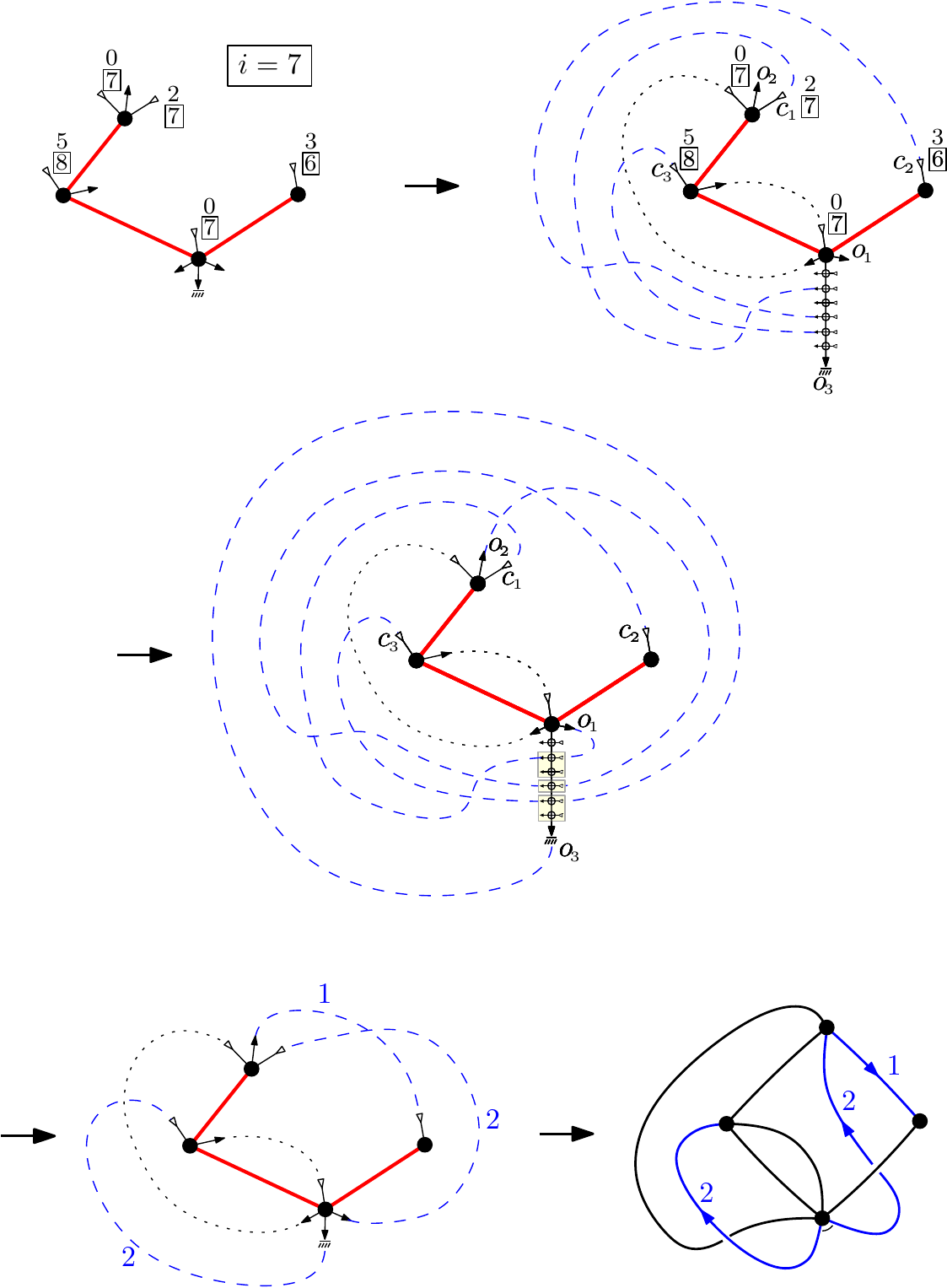}
\end{center}
\caption{{}From an $i$-enriched Eulerian tree ($i=7$ here) to a map in $\cR_i$.}
\label{fig:ri_map}
\end{figure}

Similarly as in Section~\ref{sec:bij_eulerian} we may then use the indices $\iota$ (one by each closing leaf) to construct a partial matching of the opening leaves with the closing leaves of $T'$ such that in each matched pair the opening leaf appears before the closing leaf in a clockwise walk around $T'$, and the closing leaves that are matched are only (and exactly) the non-artificial ones, see 2nd drawing in Figure~\ref{fig:ri_map}. Let $r\in[0..i-1]$ be the number of artificial opening leaves that are matched, and let $0<i_1<\cdots<i_r<i$ be their positions along $B$ (from top to bottom). For $j\in[1..r]$ let $c_j$ be the closing leaf matched with the  artificial opening leaf in position $i_j$.  
Note that $r$ is also the number of non-artificial opening leaves that are unmatched (indeed the total number of opening leaves that are unmatched is $i-1$). Let  $o_r,\ldots,o_1$ be these opening leaves, in their order of appearance around $T'$ (in particular $o_r$ is the root leaf iff it is unmatched). For $j\in[1..r]$ we can then match $o_j$ with 
the artificial closing leaf at position $i_j$ (see 3rd drawing). We may then erase all the artificial vertices, creating de facto a direct matching between $o_j$ and $c_j$,
which we mark and to which we give multiplicity $i_{j+1}-i_{j}$ (with the convention $i_{r+1}=i$, note that the sum of the $r$ multiplicities is $i-i_1\leq i-1$).  

We thus obtain a marked Eulerian tree in $\cU_r$ with multiplicities on the marked matched pairs that add up to less than $i$ (see 4th drawing). The corresponding admissible marked Eulerian map (having $r$ marked edges) with multiplicities is thus in $\cR_i$ (see 5th drawing). All steps of the construction can be inverted, so that we obtain:

\begin{prop}
\label{prop:ri}
The following families are in bijection for any $i\geq 1$:
\begin{itemize}
\item{$i$-enriched Eulerian trees,}
\item{admissible marked Eulerian maps with multiplicities adding up to less than $i$ ($\cR_i$).}
\end{itemize}
The number of nodes of degree $2k$ in the first family corresponds to the number of vertices of degree $2k$ in the second family.
In particular the total half-degree $E$ of the nodes in the first family and of the vertices in the second family is preserved (this is also the number
of edges in the second family). The two families are
enumerated by $r_i(t)$ where $t$ is conjugate to $E$ and each node (resp. vertex) of degree $2k$ is weighted by $g_k$. 
\end{prop}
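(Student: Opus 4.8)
The plan is to treat the explicit construction described immediately above the statement as the candidate map $\Psi$ from $i$-enriched Eulerian trees to $\cR_i$, and to establish that it is a bijection by checking well-definedness, invertibility, and preservation of the vertex-degree statistics; the enumeration will then follow at once from the tree-side count of Section~\ref{sec:euleriantree}. The only external input I will use is the already-proven correspondence between $\cU_a$ and $\cM_a$ from Section~\ref{sec:marked_maps}, which converts the final marked Eulerian tree into an admissible marked Eulerian map; everything else is bookkeeping on blossoming trees.

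For well-definedness I would first verify that the tree $T'$ produced by the leaf-extension of the root is a balanced Eulerian tree: each of the $i-1$ artificial vertices has degree $4$, and since one opening and one closing leaf are inserted per artificial vertex, the $i$-height in $T$ of each non-artificial closing leaf becomes its ordinary height in $T'$, so $w(T')$ is a genuine Dyck path. The heart of the matter is the bookkeeping of the matching, which is the step I expect to be the main obstacle. Writing $s$ for the number of (non-artificial) closing leaves of $T$, the tree $T'$ has $s+i-1$ opening and $s+i-1$ closing leaves, of which $i-1$ of each are artificial; the partial matching pairs exactly the $s$ non-artificial closing leaves, hence uses $s$ opening leaves and leaves exactly $i-1$ opening leaves unmatched. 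Letting $r$ be the number of those $s$ matched opening leaves that are artificial, a direct count then gives that exactly $r$ of the $s$ non-artificial opening leaves are unmatched, which is precisely what makes the completion step well-posed: the $r$ unmatched non-artificial opening leaves $o_r,\dots,o_1$ can be paired with the $r$ artificial closing leaves sitting at the matched positions $i_1<\dots<i_r$. Contracting the artificial branch then turns each scaffolding pair into a single marked edge $(o_j,c_j)$ of multiplicity $i_{j+1}-i_j$ (with $i_{r+1}=i$), and I would check that these marked pairs are exactly the non-forward ones while all surviving unmarked pairs remain forward, so that the underlying tree $T$ carries a genuine marked matching with $r$ marked pairs. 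The total multiplicity telescopes to $i-i_1\le i-1<i$, and applying the $\cU_r\leftrightarrow\cM_r$ correspondence produces the desired element of $\cR_i$.

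For invertibility I would reverse each step. Starting from a map in $\cR_i$, the bijection of Section~\ref{sec:marked_maps} returns a marked Eulerian tree with $r$ marked pairs carrying multiplicities $\mu_1,\dots,\mu_r$ summing to less than $i$; reading the marked pairs in their canonical clockwise order, the branch positions are recovered unambiguously as the partial sums $i_j=i-\sum_{\ell\ge j}\mu_\ell$, and here the bound ``total multiplicity $<i$'' is exactly what guarantees $i_1\ge1$, so that all positions lie in $[1..i-1]$. Reinserting the length-$i$ branch with its artificial vertices at these positions, undoing the completion, and reading off for each non-artificial closing leaf the position of its partner recovers the indices $\iota(c)$ via the height-to-index dictionary of Section~\ref{sec:bij_eulerian}; every step being manifestly reversible, $\Psi$ is a bijection. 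Finally, the artificial vertices all have half-degree $2$ but are contracted away, so they never appear in the output map: its vertices are exactly the nodes of $T$, with degrees preserved, giving the claimed bijection between nodes of degree $2k$ and vertices of degree $2k$ and, in particular, preserving the total half-degree $E$ and the weight $t$. Since Section~\ref{sec:euleriantree} already identifies $r_i(t)-i$ as the generating series of $i$-enriched Eulerian trees with at least one node, transporting this count through $\Psi$ shows that $\cR_i$ is enumerated by $r_i(t)-i$; restoring the nodeless tree, which supplies the additive constant $i$, the two families of the statement are enumerated by $r_i(t)$ with the same weight $g_k$ per node, resp. vertex, of degree $2k$.
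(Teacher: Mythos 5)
Your proposal is correct and follows essentially the same route as the paper: the same leaf-extension of the root into a length-$i$ branch with artificial vertices, the same partial matching, completion and contraction into marked pairs of multiplicities $i_{j+1}-i_j$, and the same appeal to the $\cU_a\leftrightarrow\cM_a$ correspondence, with your leaf count and the explicit inverse formula $i_j=i-\sum_{\ell\ge j}\mu_\ell$ merely spelling out details the paper leaves implicit. One small imprecision: a marked matching only requires the \emph{unmarked} pairs to be forward, so you should not expect the marked pairs $(o_j,c_j)$ to be exactly the non-forward ones (they may happen to be forward), but this does not affect the argument.
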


The interpretation of $r_i(t)$ in terms of marked maps makes it possible to give a combinatorial proof of the identity \eqref{drivalue}, as detailed in 
Appendix \ref{sec:driproof}.

\begin{figure}
\begin{center}
\includegraphics[width=14cm]{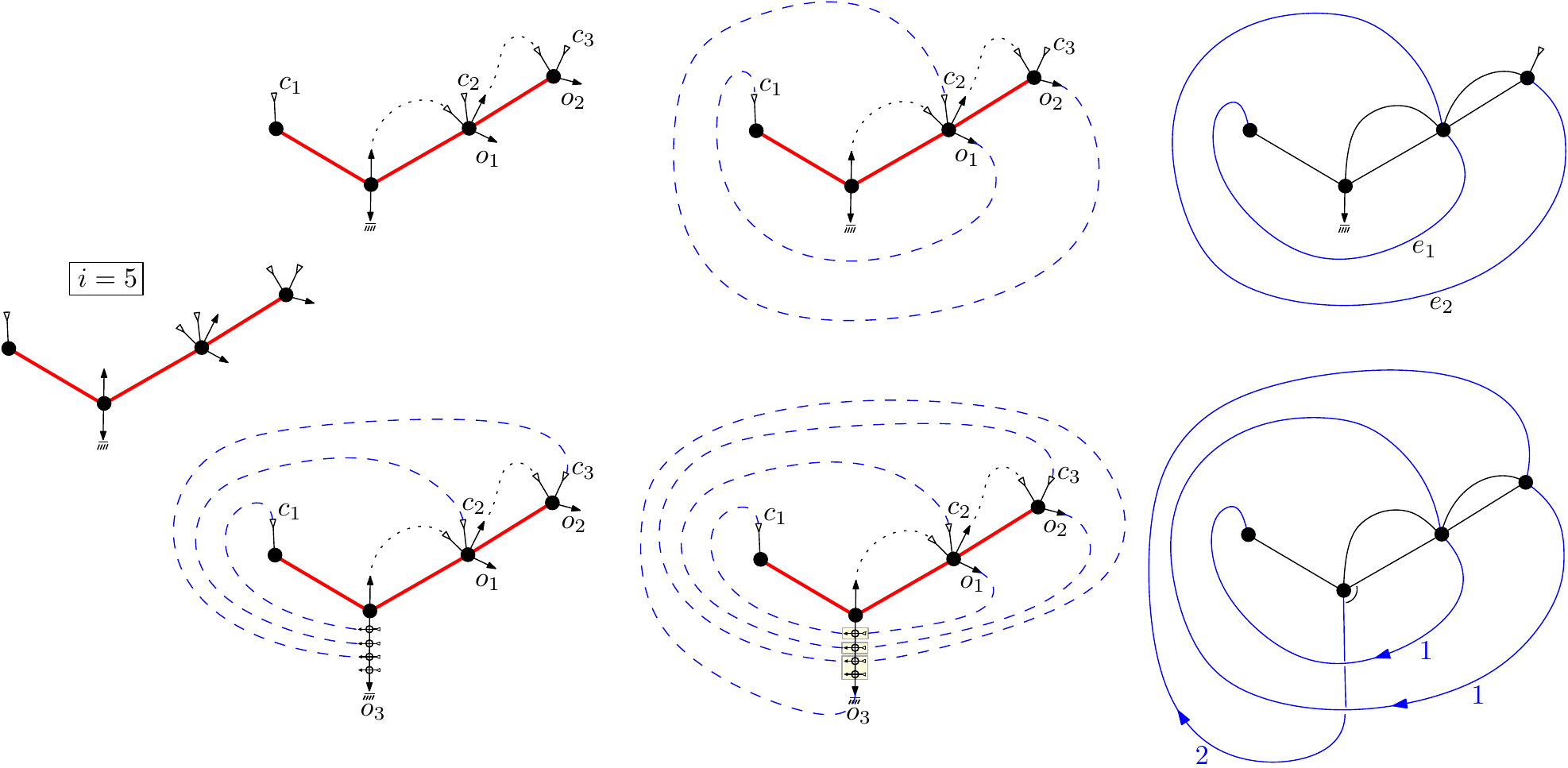}
\end{center}
\caption{Left: a $5$-balanced Eulerian tree (it is $i$-balanced for any $i\geq 3$). The top-row shows the closure bijection of~\cite{BDG03} to obtain a planar Eulerian map with two legs at distance $d\leq 4$ ($d=2$ here). The bottom-row shows that the same map (upon connecting the two legs to form the root edge) is obtained from our construction where all matching-indices are set to $0$.}
\label{fig:ri_map_planar}
\end{figure}

\medskip
\noindent{\bf Remark.} The above bijection extends, for any given $i\geq 1$, that of \cite{BDG03} between $i$-balanced Eulerian trees and two-leg planar Eulerian maps whose
two legs are at distance $d\leq i-1$ from each other (itself an extension of Schaeffer's bijection~\cite{Sc97}, which  corresponds to $i=1$). Here the distance $d$ is the minimal number of edges which need to be crossed to connect the two legs.
The two-leg map is obtained from the $i$-balanced tree $T$ as follows. We perform a clockwise walk around $T$ starting at the root. For each encountered closing leaf, we match
it to the first available opening leaf before it, if any. We obtain a partial forward matching leaving a number $2d+1$ of unmatched non-root leaves,
with $d\leq i-1$. More precisely, 
in clockwise order around $T$, the first unmatched leaves are $d+1$ closing leaves $c_1,\dots,c_{d+1}$, followed by $d$ unmatched non-root opening leaves
$o_d,\dots,o_1$. The construction is completed by matching $o_j$ to $c_j$ for $j=1,\dots,d$, leading to a planar map with two legs (one leading to $c_{d+1}$, 
the other to the root leaf) at distance $d$ from each other (see the top-row in Figure~\ref{fig:ri_map_planar}). Moreover via the bijection the map is naturally endowed with a canonical set of $d$ edges $e_1,\ldots,e_d$ separating the two legs, where each edge $e_j$ results from matching $o_j$ to $c_j$.  
Noting that $i$-balanced Eulerian trees are clearly identified with $i$-enriched Eulerian trees
where all the matching indices are $0$, the construction of \cite{BDG03} may be viewed, upon connecting the two legs to form the root edge of the map, 
as a specialization (see the bottom-row in Figure~\ref{fig:ri_map_planar}) of that of Proposition~\ref{prop:ri}. In our construction the non-root marked
edges are all of multiplicity~$1$ and they precisely correspond to the above mentioned edges $e_1,\ldots,e_d$ of the two-leg map. In addition, the root edge is marked with multiplicity $i-1-d$ if $d<i-1$ and unmarked for $d=i-1$ (in particular, the total multiplicity takes the maximal allowed value $i-1$).

\subsection{Face-colored maps}\label{sec:face-colored}
{}From \eqref{eq:Tvalbis} and Proposition~\ref{prop:ri}, we may interpret $T(t,N)$ as a particular counting series for marked Eulerian maps with multiplicities. 
As we will now show, this interpretation is more enlightening if we consider \emph{$N$-fully-colored} maps, i.e. face-colored maps with color set $\{1,2,\dots,N\}$ such that
for every $j\in[1..N]$ there is at least one face of color $j$. Let $\tT(t,N)$ be the counting series of $N$-fully-colored rooted Eulerian maps. Obviously we have
\begin{equation}
T(t,N)=\sum_{a=1}^N {N\choose a} \tT(t,a).
\label{eq:TTtilde}
\end{equation}
On the other hand, for $a\geq 1$ we let $u_a(t)$ be the counting series of $\cM_{a-1}$ (which is also the counting series of $\cU_{a-1}$ by the bijection of the previous section), and let $v_a(t)$ (resp. $w_a(t)$) 
be the series gathering the maps in $\cM_{a-1}$ where the root edge is unmarked (resp. marked).  Note that $v_{a}(t)=w_{a+1}(t)$ (by switching the status marked/unmarked of the root edge). In the previous section we have seen that $r_i(t)-i$ is the counting series of marked Eulerian trees with an unfixed number $a-1$ of marked
matched pairs carrying multiplicities adding up to less than~$i$. 
As we have seen, these multiplicities can be encoded by integers $0<i_1<\cdots<i_{a-1}<i$, so that 
\[
r_i(t)-i=\sum_{a=1}^{i}\binom{i-1}{a-1}u_a(t).
\] 
Hence if we let $U(x):=\sum_{a\geq 1} u_a(t)x^a$ then we have (using $\sum\limits_{i=a}^N\binom{i-1}{a-1}=\binom{N}{a}$)
\[
S(t,N):=\sum_{i=1}^N(r_i(t)-i)=\sum_{a=1}^N\binom{N}{a}u_a(t)=[x^N](1+x)^NU(x).
\]
We let $V(x):=\sum_{a\geq 1} v_a(t)x^a$, and note that $U(x)=(1+x)V(x)$ (since $v_a(t)=w_{a+1}(t)$ implies $U(x)-V(x)=xV(x)$).
We have from \eqref{eq:Tvalbis}
\begin{align*}
T(t,N)&=S(t,N)+S(t,N-1)\\
&=[x^N](1+x)^NU(x)+[x^{N-1}](1+x)^{N-1}U(x)\\
&=[x^N](1+x)^NU(x)+[x^{N}]x(1+x)^{N}V(x)\\
&=[x^N](1+x)^N(U(x)+xV(x))\\
&=\sum_{a=1}^N\binom{N}{a}\tilde{u}_a(t), \ \ \mathrm{with}\ \tilde{u}_a(t):=[x^a](U(x)+xV(x)).
\end{align*}
Note that $\tilde{u}_a(t)=u_a(t)+v_{a-1}(t)=u_a(t)+w_a(t)=v_a(t)+2w_a(t)$, i.e., $\tilde{u}_a(t)$ is the counting series of $\cM_{a-1}$ where a map is counted once if its root edge is unmarked and twice if it is marked (it is also the counting series of $\cU_{a-1}$ where a marked tree is counted once if the root leaf is not in a marked pair and twice otherwise). By comparing with \eqref{eq:TTtilde}, we obtain the remarkable identity
\begin{equation}\label{eq:bijtT}
\tT(t,N)=\tilde{u}_N(t),\ \ N\geq 1.
\end{equation}
In the case where there is a single vertex of degree $2n$, the constraint of being admissible is easily dealt with: a map in $\cM_{a-1}$ is completely encoded by the underlying one-vertex rooted map, by the choice of $a-1$ marked edges among the $n$ edges, and by a binary choice for each marked edge $e$ (if $e$ is not the root edge it gives the direction of $e$, if $e$ is the root edge its direction is fixed but we have to count the object twice, as mentioned above). Hence we have $[g_{n}t^{n}]\tilde{u}_a(t)=(2n-1)!!\binom{n}{a-1}2^{a-1}$, where the factor $(2n-1)!!$ gives the number of one-vertex rooted maps with $n$ edges. We thus recover the Harer-Zagier summation formula~\cite{HaZa86}
\begin{equation*}\label{eq:harer_zagier}
[g_{n}t^{n}]T(t,N)=(2n-1)!!\sum_{a=1}^N\binom{N}{a}\binom{n}{a-1}2^{a-1}.
\end{equation*}
It would be interesting to find a bijective proof of~\eqref{eq:bijtT} for $N\geq 2$. In the one-vertex case, bijective proofs of the Harer-Zagier summation formula (relying on the encoding of $N$-fully-colored one-vertex rooted maps) have been given in~\cite{GoNi05,Be12,ch13}. For more than one vertex, we note that it is not possible to find a  bijection for~\eqref{eq:bijtT} where the underlying graph is always preserved. Indeed, already for $N=2$ and two vertices of degree $4$, letting $\nu\in\{2,4\}$ be the number of edges connecting the two vertices, we find $[g_2^2t^4]\tT(t,2)=156$ with contribution $24$ when $\nu=4$ and contribution $132$ when $\nu=2$, whereas $[g_2^2t^4]\tilde{u}_2(t)=156$ with contribution $48$ when $\nu=4$ and contribution $108$ when $\nu=2$.   

\section{An analogous blossoming tree approach for $m$-regular bipartite maps with unfixed genus}
\label{sec:mregular}
The aim of this section is to transpose the above combinatorial correspondences between Eulerian maps and Eulerian trees to the
family of $m$-regular bipartite maps.
Recall that $m$-regular bipartite maps are maps where all vertices have degree $m$ and are colored in black or white so that no
two adjacent vertices have the same color. Such a map is called \emph{rooted} if it has a marked corner, called the \emph{root corner}, at a white vertex.
As before, the \emph{root vertex} is the one incident to the root corner, the \emph{root half-edge} is the half-edge following the root corner
in clockwise order around the root vertex, and the \emph{root edge} is the edge containing the root half-edge.
{}From now on, unless otherwise stated, we will consider $m\geq 3$. 
 
\subsection{Counting formulas from matrix integrals}
\label{sec:intmregular}
In this section, we are interested in the generating function $r_1(g)$ for rooted $m$-regular bipartite maps enumerated with a weight $g$ 
per black vertex (here again, all the maps that we consider are connected). As in Section~\ref{sec:recursion}, we may recourse to the 
appropriate integral representation of $m$-regular bipartite map generating 
functions to show that  $r_1(g)$ may be obtained as the first term of a family of functions $(r_i(g))_{i\geq 1}$ which are determined order by order in $g$
via a recursive system. The precise derivation of this statement is presented in Appendix \ref{sec:mregularint}, in the spirit of the analysis of
Section~\ref{sec:orthpol}, by use of bi-orthogonal polynomials (in the bipartite setting, the integrals as such are divergent but we can mimic them by formal 
operators acting on power series).
Let us summarize here the outcome of this derivation:
we define
\begin{equation*}
r_1(g):=1+\sum_{V_\bullet\geq 1}g^{V_\bullet} W_{V_\bullet}
\end{equation*} 
where  $W_{V_\bullet}$ is the generating function for rooted $m$-regular bipartite maps with a total of $V_\bullet$ black vertices. 
The function $r_1(g)$ is the first term of the family $(r_i(g))_{i\geq 1}$ determined by the recursive system
\begin{equation}
r_i(g)=i+ \sum_{a=0}^{m-2} q_{i-a}(g)\ , \qquad 
q_i(g)=g\, \prod_{a=0}^{m-2} r_{i+a}(g)\ , \quad i\geq 1
\label{eq:sysrq}
\end{equation}
with the convention that $q_{i}(g)=0$ for $i\leq 0$. In particular, for $i=1$, we have the simple relation
\begin{equation*}
r_1(g)=1+q_{1}(g)\ .
\end{equation*}

For $m=3$ the above recursive system reduces to 
 \begin{equation*}
r_i(g)=i+g\, r_i(g)\big(r_{i+1}(g)+r_{i-1}(g)\big)\ , \qquad i\geq 1
\end{equation*}
with the convention $r_0(g)=0$. At first orders in $g$, this yields
\begin{equation*}
r_i(g)=i+2i^2\, g+4i(2i^2+1)g^2+8i^2(5i^2+9)g^3+16i(14i^4+58i^2+15)g^4+\ldots
\end{equation*}
and in particular, from the $i=1$ series, 
\begin{equation*}
W_1= 2\ , \quad 
W_2= 12 \ , \quad 
W_3= 112 \ , \quad 
W_4= 1392\ . \quad 
\end{equation*}

Similarly to Section~\ref{sec:facecol}, for $m\geq 3$ we have the remarkable identity
\begin{equation}
m\, g \frac{d}{dg}{\rm Log}\, r_i(g)= q_{i+1}(g)-q_{i-m+1}(g)\ .
\label{eq:drimbis}
\end{equation}
This relation is proved by verification from the recursive system \eqref{eq:sysrq} in Appendix~\ref{sec:drim}, and bijectively in Appendix \ref{sec:proofdqim}. 

We may again extend $r_1(g)$ by considering the more general generating function $T(g,N)$ for faced-colored rooted $m$-regular bipartite maps, where the faces are colored 
with color set $\{1,2,\ldots,N\}$. {}From a matrix integral analysis analogous to that of Section~\ref{sec:facecol} (see~\cite[Sect.4.1]{DGZ95}), it is given by\footnote{As opposed to the $N=1$ case, some steps of the proof require genuine converging integrals rather than formal operators. This can be obtained by first allowing monochromatic edges,
with weights $c>1$, and then performing the limit $c\to 0$ via some analytic continuation.} 
\begin{equation*}
T(g,N)=N \big(r_1(g)-1\big)+\sum_{i=1}^N(N-i)\, m\, g\frac{d}{dg}{\rm Log}\,  r_i(g)
\end{equation*}
with the above counting series $r_i(g)$. {}From \eqref{eq:drimbis}, this simplifies into\footnote{This alternative expression for $T(g,N)$ may also be obtained directly
by inserting in the integrand of the matrix integral a factor that accounts for the root vertex, viewed as a marked white vertex with a natural ordering of its $m$ incident 
half-edges endowed by the root corner.} 
\begin{equation}
T(g,N)=\sum_{j=0}^{m-1} S(g,N-j)\ , \quad S(g,N):=\sum_{i=1}^N q_{i}(g)
\label{eq:TgN}
\end{equation}
with $q_i(g)$ as in \eqref{eq:sysrq}. 

\subsection{Bijection with blossoming trees}
\label{sec:blossommregular}
A tree is called \emph{bipartite} if its nodes are partitioned into white nodes and black nodes so that there is no edge connecting two white nodes or two black nodes. 
We define an \emph{$m$-bipartite tree} as a bipartite blossoming tree where all nodes have degree $m$, the root node (if the tree is not the nodeless one) is white, 
 all opening (resp. closing) leaves are adjacent to white (resp. black) nodes, and every white node has exactly one child that is a black node, the other $m-2$ children being opening leaves. It is easy to check that such a tree satisfies the blossoming tree property that there are as many opening leaves as closing leaves (indeed, if $n$ denotes the number of black nodes, then $n$ is also the number of white nodes since the node-to-parent mapping is a 1-to-1 correspondence from black nodes to white nodes; then the number of opening leaves is clearly $n(m-2)+1$ and the node-to-parent mapping applied this time to white nodes ensures that the number of closing leaves is $n(m-1)-(n-1)=n(m-2)+1$). 

For $i\geq 1$, we let $\hr_i(g)$ be the counting series of $i$-balanced $m$-bipartite trees with weight $g$ per black node and weight $z_h$ per closing leaf whose $i$-height is $h$. We let $\hq_i(g)$ be the series gathering the terms in $\hr_i(g)$ corresponding to trees that are not nodeless and such that the black-node child of the root node is its rightmost child. 
Note that for $a\in[0..m-2]$ the series gathering the terms of $\hr_i(g)$ where the black-node child of the root node
is the $(m-1-a)$th child (ordering children from left to right) is equal to $\hq_{i-a}(g)$, hence 
\[
\hr_i(g)=z_i+\sum_{a=0}^{m-2}\hq_{i-a}(g),\ \ \mathrm{with}\ \hq_j=0\ \mathrm{for}\ j<0.
\]
Next, a tree counted by $\hq_i(g)$ is decomposed at the black-node child of the root node into $m-1$ subtrees counted respectively, from left to right, by $\hr_{i+m-2}(g),\ldots,\hr_i(g)$. Hence 
\[
\hq_i(g)=g\, \prod_{a=0}^{m-2}\hr_{i+a}(g). 
\]

Similarly as for Eulerian trees, if we let $r_i(g)$ and $q_i(g)$ be the specializations of $\hr_i(g)$ and $\hq_i(g)$
 where $z_h$ has been set to $h$, then 
$r_1(g)$ (which is also $1+q_1(g)$) is the counting series of enriched $m$-bipartite trees, hence is also the counting series of $m$-bipartite trees endowed with a forward matching. 

\begin{lem}\label{lem:ori_mbip}
Let $m\geq 2$ and let $M$ be a bipartite map such that every vertex-degree is a multiple of $m$. 
Let $\alpha$ be the function from the vertex-set $V$ of $M$ to $\mathbb{N}$ such that for every black (resp. white) vertex $v$ of degree $md$ we have $\alpha(v)=d$ (resp. $\alpha(v)=(m-1)d$). Then $\alpha$ is feasible and for every vertex $v_0$ of $M$ it is $v_0$-accessible (i.e., every $\alpha$-orientation of $M$ is strongly connected).   
\end{lem}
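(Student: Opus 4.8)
The plan is to verify directly the feasibility-and-accessibility criterion recalled just before Theorem~\ref{thm:bernardi}: a function $\alpha$ is feasible and $v_0$-accessible if and only if $\alpha(V)=|E|$ and $\alpha(S)>|E_S|$ for every nonempty $S\subseteq V\backslash v_0$. Since the statement asks for $v_0$-accessibility for \emph{every} choice of $v_0$ (which is precisely the assertion that every $\alpha$-orientation is strongly connected), I will in fact establish the sharper fact that $\alpha(S)>|E_S|$ holds for \emph{every} nonempty proper subset $S\subsetneq V$; specializing to those $S$ that avoid a given $v_0$ then yields the claim uniformly in $v_0$.

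First I would check the global balance $\alpha(V)=|E|$. Writing a black (resp.\ white) vertex of degree $md$ as carrying $\alpha$-value $d$ (resp.\ $(m-1)d$), and using that in a bipartite map each edge has exactly one black and one white endpoint, the total black degree and the total white degree both equal $|E|$. Hence $\sum_{v\ \mathrm{black}}d_v=|E|/m$ and $\sum_{v\ \mathrm{white}}d_v=|E|/m$, so that $\alpha(V)=|E|/m+(m-1)|E|/m=|E|$, as required.

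The heart of the argument is the strict inequality. Fix a nonempty proper $S\subsetneq V$, split it as $S_\bullet\sqcup S_\circ$ into its black and white vertices, and classify the edges incident to $S$: let $e_{SS}=|E_S|$ be the edges internal to $S$, let $e_\bullet$ be the edges from $S_\bullet$ to white vertices outside $S$, and let $e_\circ$ be the edges from $S_\circ$ to black vertices outside $S$ (there are no loops, by bipartiteness). Counting half-edges at the two colour classes gives $\deg(S_\bullet)=e_{SS}+e_\bullet$ and $\deg(S_\circ)=e_{SS}+e_\circ$, where $\deg$ denotes total degree. Since $m\,\alpha(S)=\deg(S_\bullet)+(m-1)\deg(S_\circ)$, substituting yields the clean identity
\begin{equation*}
\alpha(S)-|E_S|=\frac{e_\bullet+(m-1)e_\circ}{m}.
\end{equation*}
As $m\geq 2$, the right-hand side is nonnegative, and it is strictly positive as soon as $S$ has at least one boundary edge (i.e.\ $e_\bullet+e_\circ\geq 1$). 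Because $M$ is connected and $\emptyset\neq S\subsetneq V$, such a boundary edge always exists, giving $\alpha(S)>|E_S|$ and completing the verification.

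The computation is elementary once the right bookkeeping is set up, so I do not expect a genuine obstacle; the only delicate point is choosing how to relate $\alpha(S)$ to $|E_S|$. The asymmetric weighting of $\alpha$ (value $1/m$ of the degree on black vertices, $(m-1)/m$ on white vertices) is exactly what makes $m\,\alpha(S)-m\,|E_S|$ collapse to the nonnegative combination $e_\bullet+(m-1)e_\circ$ of boundary-edge counts; securing this cancellation is where care is needed, and it is what forces the inequality to be strict precisely when a boundary edge is present.
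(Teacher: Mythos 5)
Your proposal is correct and follows essentially the same route as the paper: the paper writes $\alpha(S)=\big(1-\tfrac1m\big)|\Ew|+\tfrac1m|\Eb|$ with $\Ew,\Eb$ the edge sets having their white (resp.\ black) extremity in $S$, and observes $|\Ew|,|\Eb|\geq|E_S|$ with equality in both only when $S$ has no boundary edge, i.e.\ $S=V$; your identity $\alpha(S)-|E_S|=\frac{e_\bullet+(m-1)e_\circ}{m}$ is exactly this computation made explicit. The only (harmless) difference is that you check $\alpha(V)=|E|$ separately, whereas the paper obtains it as the equality case $S=V$.
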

\begin{proof}
Let $S\subseteq V$, and let $\Sw$ (resp. $\Sb$) be the set of white (resp. black) vertices in $S$. Let $\Ew$ (resp. $\Eb$)
be the set of edges of $M$ whose white extremity is in $\Sw$ (resp. whose black extremity is in $\Sb$). We have
\[
\alpha(S)=\alpha(\Sw)+\alpha(\Sb)=\big(1-\tfrac1{m}\big)|\Ew|+\tfrac1{m}|\Eb|.
\]
We clearly have $|\Ew|\geq|E_S|$ and $|\Eb|\geq|E_S|$, hence $\alpha(S)\geq |E_S|$, and moreover the inequality is tight iff $\Ew=E_S$ and $\Eb=E_S$, which happens iff $S=V$. Hence, if $S$ does not contain $v_0$ the inequality is strict, so that $\alpha$ satisfies the general criteria (as stated in Section~\ref{section:alpha}) ensuring that $\alpha$ is feasible and $v_0$-accessible.  
\end{proof}
In the specific case of $m$-regular bipartite maps, Lemma~\ref{lem:ori_mbip} ensures that these maps admit an orientation where white vertices have outdegree $m-1$ and black vertices have outdegree~$1$. 
Such orientations are called \emph{1-orientations}.

\begin{lem}\label{lem:mbiptree}
Let $M$ be a rooted $m$-regular bipartite map, let $O$ be its minimal 1-orientation, and let $T$ be the spanning tree of $M$ such that $\Phi_M(T)=O$. Then every white vertex has exactly one black child in $T$, and every external edge (edge of $M\backslash T$) is oriented from its white to its black extremity. In addition the unique ingoing edge at the root vertex $v_0$ is the one that precedes the root corner in clockwise order around $v_0$.  
\end{lem}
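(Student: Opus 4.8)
The plan is to read off all three assertions from the two defining features of $\Phi_M(T)$---tree-edges point toward the root, while each external edge has its first half-edge outgoing and its second ingoing---combined with the outdegree constraints of a $1$-orientation (white vertices have outdegree $m-1$, black vertices outdegree $1$). I would first dispose of the claim about external edges. Since the root is white, every black vertex $b$ is a non-root vertex, so it has a parent-edge in $T$; that edge points toward the root and is thus outgoing at $b$. As $b$ has outdegree $1$, this is its only outgoing edge, so every other edge at $b$---in particular every external edge incident to $b$---is ingoing at $b$. An external edge joins a white vertex $w$ to a black vertex $b$, so being ingoing at $b$ means it is oriented from $w$ to $b$, which is exactly the assertion.

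Next I would count half-edges at a white vertex $w$ to obtain the statement about children. The child-edges of $w$ in $T$ point toward $w$ (hence are ingoing), the parent-edge of $w$ (if any) is outgoing, and by the previous paragraph every external edge at $w$ is outgoing. Hence the indegree of $w$ equals its number of children. Since $w$ has degree $m$ and outdegree $m-1$, its indegree is $1$, so $w$ has exactly one child, necessarily black. This argument is uniform: for the root $v_0$ there is no parent-edge, and again the indegree is $m-(m-1)=1$.

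The substantive part is the claim about the root, and this is where I expect the main work. Let $h_0,\dots,h_{m-1}$ be the half-edges around $v_0$ in clockwise order, with $h_0$ the root half-edge and the root corner lying between $h_{m-1}$ and $h_0$; then the edge preceding the root corner is the one carrying $h_{m-1}$, and by the previous paragraph $v_0$ has a unique ingoing edge, so it suffices to show this ingoing edge is the one at $h_{m-1}$, equivalently that $h_0,\dots,h_{m-2}$ are all outgoing in $O$. My strategy is to invoke Lemma~\ref{lem:outroot} with $k=m-1$, for which I must exhibit some $1$-orientation in which $h_0,\dots,h_{m-2}$ are outgoing. I would build it by a cycle-reversal: starting from an arbitrary $1$-orientation $O'$ (which exists and is strongly connected by Lemma~\ref{lem:ori_mbip}), if the edge $e_{m-1}$ carrying $h_{m-1}$ is not already the unique ingoing edge of $v_0$ in $O'$, then $e_{m-1}=(v_0,b_{m-1})$ is outgoing, and strong connectivity provides a simple directed path from $b_{m-1}$ back to $v_0$. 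Its final edge must be the unique ingoing edge $e$ of $v_0$, so together with $e_{m-1}$ it forms a simple directed cycle $C$ through $v_0$ containing both $e$ and $e_{m-1}$. Reversing $C$ preserves all outdegrees, hence yields a $1$-orientation in which $e_{m-1}$ is now ingoing and $e$ outgoing at $v_0$; thus $h_0,\dots,h_{m-2}$ are outgoing. Lemma~\ref{lem:outroot} then forces them to remain outgoing in $O=O_{\rm min}$, and since the indegree of $v_0$ equals $1$, the unique ingoing edge must be $e_{m-1}$, which precedes the root corner. The delicate point, and the main obstacle, is ensuring that the path used in the reversal closes up into a genuine \emph{simple} cycle through $v_0$ whose reversal flips exactly the ingoing edge at $v_0$; this rests on the fact that $v_0$ carries a single ingoing edge in every $1$-orientation, which forces any directed path arriving at $v_0$ to terminate with that edge.
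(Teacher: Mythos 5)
Your proof is correct, and the first two assertions are handled essentially as in the paper, just in the opposite order: the paper first gets ``at most one child'' from the indegree of white vertices, upgrades it to ``exactly one'' by the injectivity of the black-to-parent map together with the equality of the numbers of black and white vertices, and only then deduces the orientation of external edges; you instead read off the external-edge orientation directly at black vertices (parent edge outgoing, outdegree $1$) and then obtain ``exactly one child'' by a local indegree count at each white vertex, which is slightly more economical since it avoids the global counting step. The real divergence is in the third assertion. The paper's argument is a two-line consequence of the definition of $\Phi_M(T)$: the half-edge $h$ preceding the root corner is the \emph{last} half-edge met in the clockwise walk around $T$ starting at the root corner, so if it were the outgoing half-edge of an external edge it would have to be the \emph{first} half-edge of that edge, a contradiction; hence $h$ lies on the unique internal (ingoing) edge at $v_0$. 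You instead construct an auxiliary $1$-orientation in which $h_0,\dots,h_{m-2}$ are outgoing (via strong connectivity from Lemma~\ref{lem:ori_mbip} and a simple-cycle reversal through $v_0$, which is sound since $v_0$ has indegree $1$ in every $1$-orientation) and then invoke Lemma~\ref{lem:outroot} with $k=m-1$. Your route is heavier --- it needs the cut/demand machinery behind Lemma~\ref{lem:outroot} plus an explicit orientation surgery --- but it is the one that transfers verbatim to the marked setting of Section~\ref{sec:morem}, where the paper itself switches to exactly this Lemma~\ref{lem:outroot} argument; the paper's walk-order argument is shorter here but exploits the unmarked situation.
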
 
\begin{proof}
White vertices have outdgree $m-1$, hence indegree $1$ and therefore have at most one child in $T$. Hence the mapping that sends a black vertex to its parent is injective from black vertices to white vertices. Since there is the same number of white vertices and black  vertices ($M$ being bipartite $m$-regular), the mapping is actually one-to-one, hence every white vertex has one child in $T$. This also ensures that all edges ingoing at a white vertex are in $T$, so that all external edges are oriented from their white to their black extremity. Let $e$ be the edge between the root vertex $v_0$ and its unique black child in $T$.  Let $h$ be the half-edge preceding the root corner in clockwise order around the root vertex. If $h$ was not on $e$ it would be outgoing and part of an external edge. But the opposite (ingoing) half-edge of $h$ would come before $h$ in a clockwise walk around $T$ starting at the root corner, giving a contradiction.  
\end{proof}

\begin{figure}
\begin{center}
\includegraphics[width=14cm]{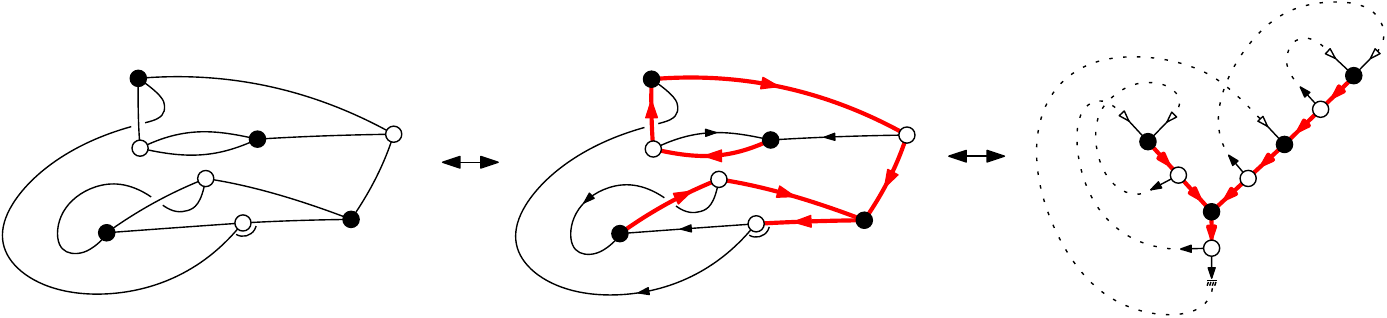}
\end{center}
\caption{Left: a rooted $m$-regular bipartite map $M$ (with $m=3$). Middle: the minimal 1-orientation of $M$ (with the associated spanning tree in red). Right: cutting each external edge in its middle and taking as the root leaf the extremity of the root half-edge, one obtains an $m$-bipartite tree endowed with a forward matching.}
\label{fig:bij_mbip}
\end{figure}

We can now describe a bijection (for any $m\geq 3$) between rooted $m$-regular bipartite maps and $m$-bipartite  trees endowed with a forward matching. For $M$ a rooted $m$-regular bipartite map, with $O$ its minimal 1-orientation and $T$ the spanning tree such that $\Phi_M(T)=O$, we cut each external edge (edge of $M\backslash T$) at its middle, thereby creating two edges, the end of the outgoing (resp. ingoing) half-edge being considered as an opening (resp. closing) leaf. 
The root leaf is taken as the opening leaf resulting from cutting the root edge of $M$ (which has to be external according to the last point in Lemma~\ref{lem:mbiptree}).   
 We clearly obtain an $m$-bipartite tree $T'$ endowed with a forward matching (a matched pair for each cut edge), see Figure~\ref{fig:bij_mbip}. 

Conversely, for $T'$ an $m$-bipartite tree endowed with a forward matching, we orient 
all edges of $T'$ toward the root, except for the edges incident to an opening leaf, which we orient toward the leaf. 
We then merge each matched pair of leaves into an edge. We obtain a rooted $m$-regular bipartite map $M$ endowed with a 1-orientation $O$. In addition, if we let $T$ be the subtree of $T'$ induced by the nodes, then we have $\Phi_M(T)=O$, so that $O$ 
is the minimal 1-orientation of $M$. 

\medskip
To summarize, we obtain:
\begin{theo}
\label{thm:bip}
The following families are in bijection, for any $m\geq 3$:
\begin{itemize}
\item{enriched $m$-bipartite trees,}
\item{$m$-bipartite trees endowed with a forward matching,}
\item{rooted $m$-regular bipartite maps.}
\end{itemize}
The number of black nodes in the first two families is preserved and corresponds to the number of black vertices in the third family.
The three families are
enumerated by $r_1(g)$ where $g$ is conjugate to the number of black nodes (resp. vertices). 
\end{theo}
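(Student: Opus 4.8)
The plan is to prove Theorem~\ref{thm:bip} by establishing that the two maps described in the excerpt---from rooted $m$-regular bipartite maps to $m$-bipartite trees with a forward matching, and back---are mutually inverse, and that the intermediate equivalence with enriched $m$-bipartite trees is exactly the matching-assignment correspondence already set up in Section~\ref{sec:bij_eulerian}. The structure mirrors the Eulerian case (Theorem~\ref{thm:eul}) almost verbatim, so most of the work is checking that the three new ingredients specific to the bipartite setting behave as needed: the $1$-orientation supplied by Lemma~\ref{lem:ori_mbip}, its minimality via Bernardi's Theorem~\ref{thm:bernardi}, and the structural constraints recorded in Lemma~\ref{lem:mbiptree}.

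First I would treat the equivalence between the first two families. Since $z_h$ is specialized to $h$, an enriched $m$-bipartite tree is an $m$-balanced (equivalently $1$-balanced) tree in which each closing leaf $c$ of height $h$ carries a matching-index $\iota(c)\in[0..h-1]$; by the matching-assignment construction recalled before Figure~\ref{fig:bij_matching}, these indices are in canonical bijection with forward matchings of the leaves. This step requires no genus-specific argument and may be invoked directly. What remains is the genuinely new correspondence, between $m$-bipartite trees with a forward matching and rooted $m$-regular bipartite maps.

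For the map-to-tree direction, I would apply Lemma~\ref{lem:ori_mbip} with $d=1$ at every vertex to see that $\alpha$ (white vertices of outdegree $m-1$, black vertices of outdegree $1$) is feasible and root-accessible, so Theorem~\ref{thm:bernardi} furnishes a unique minimal $1$-orientation $O$ with its spanning tree $T$ satisfying $\Phi_M(T)=O$. Lemma~\ref{lem:mbiptree} then does the decisive bookkeeping: every white vertex has exactly one black child, every external edge points from white to black, and the root edge is external with the root half-edge outgoing. Cutting each external edge at its midpoint and declaring ends of outgoing (resp.\ ingoing) half-edges to be opening (resp.\ closing) leaves, one checks directly that $T'$ meets every clause of the definition of an $m$-bipartite tree: all nodes have degree $m$, the root node is white, opening leaves sit at white nodes and closing leaves at black nodes, and each white node has precisely one black child and $m-2$ opening-leaf children. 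The external-edge pairing then reads off as a forward matching.

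For the tree-to-map direction, I would run the stated inverse---orient all tree-edges toward the root except those incident to opening leaves, merge each matched pair---and verify that the result is a rooted $m$-regular bipartite map carrying a $1$-orientation $O$, that the node-induced subtree $T$ satisfies $\Phi_M(T)=O$, and hence (again by Theorem~\ref{thm:bernardi}) that $O$ is \emph{the} minimal $1$-orientation. That last identification is what guarantees the two directions compose to the identity, since it pins down which spanning tree the map-to-tree direction will recover. The main obstacle, as in the Eulerian case, is precisely this minimality check: one must confirm that the spanning tree obtained by forgetting leaves and matchings is the Bernardi tree of the minimal orientation rather than some other $1$-orientation's tree. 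The forward-matching condition is exactly what encodes the ``first half-edge outgoing, second ingoing'' convention defining $\Phi_M$, so the verification reduces to matching the clockwise leaf-ordering of $T'$ with the clockwise walk around $T$ used in Bernardi's construction; once this is seen the preservation of the number of black nodes is immediate, and the enumeration by $r_1(g)=1+q_1(g)$ follows from the interpretation of $r_1$ established in Section~\ref{sec:blossommregular}.
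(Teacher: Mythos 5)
Your proposal follows the paper's own proof essentially verbatim: the same reduction of the first equivalence to the matching-assignment/forward-matching correspondence of Section~\ref{sec:bij_eulerian}, and the same use of Lemma~\ref{lem:ori_mbip}, Bernardi's Theorem~\ref{thm:bernardi}, and Lemma~\ref{lem:mbiptree} to set up the cut-external-edges/merge-matched-pairs bijection, with the identity $\Phi_M(T)=O$ serving as the minimality check that makes the two directions inverse. The only slip is the parenthetical ``$m$-balanced (equivalently $1$-balanced)'': an enriched tree is a balanced, i.e.\ $1$-balanced, tree with matching indices, and $m$-balancedness is a strictly weaker condition, but this does not affect the argument.
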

\vspace{.2cm}

\noindent{\bf Remark.} As in the Eulerian case, the $m$-regular bipartite map associated to an enriched $m$-bipartite  tree is planar iff all matching indices are $0$. In that case our construction coincides with the bijection  by Bousquet-M\'elou and Schaeffer~\cite{BoSc00}, in its reformulation relying on 1-orientations 
 as given in~\cite[Sect.3.2]{AlPo15} (these bijections hold more generally for bipartite maps where white vertices have degree $m$ and black vertices have degrees multiple of $m$).  As in Section~\ref{sec:planar}, our construction can be reformulated as applying the planar case bijection, upon performing leaf-extension operations at closing leaves.

\subsection{More results using blossoming trees}
\label{sec:morem}
Similarly as for Eulerian maps, we can obtain more combinatorial results using the setting of marked maps (for convenience we override the analogous notation used for Eulerian maps). 
A marked $m$-regular bipartite map is called \emph{admissible} if it admits a compatible root-accessible 1-orientation where the unique ingoing edge at the root vertex $v_0$ is the one preceding the root corner in clockwise order around $v_0$. Let $\cM_a$ be the family of admissible marked $m$-regular bipartite maps with $a$ marked edges. 
On the other hand, let $\cU_a$ be the family of marked $m$-bipartite trees with $a$ marked pairs, and such that the unique node-child of the root node is the rightmost child. Note that the bijection of the previous section is from $\cM_0$ to $\cU_0$. 
For $M\in\cM_a$, let $O$ be the canonical 1-orientation of $M$, and let $T$ be its canonical spanning tree. By Lemma~\ref{lem:outroot} the unique ingoing edge at the root vertex $v_0$ is the one preceding the root corner in clockwise order around $v_0$. Let $T'\in\cU_a$ be the $m$-bipartite tree obtained from $M$ by cutting each external edge (edge not in $T$) at its middle, the end of the outgoing (resp. ingoing) half-edge being considered as an opening (resp. closing) leaf. The pairs resulting from marked edges (which have to be external) are declared as marked pairs, 
and the root leaf is taken as the opening leaf resulting from cutting the root edge of $M$ (since it goes out of $v_0$, the root leaf is adjacent to $v_0$, note that the root edge is possibly marked, in which case the pair involving the root leaf is marked). Conversely, for $T'\in\cU_a$, we orient all edges of $T'$ toward the root, except for the edges incident to the opening leaves, which we orient toward the leaf. We then merge each matched pair of leaves into an (oriented) edge, which we consider as a marked edge if the pair is marked. We obtain an admissible 
 marked $m$-regular bipartite map $M\in\cM_a$ endowed with its canonical 1-orientation, and such that the subtree $T$ of $T'$ induced by the nodes is the canonical spanning tree of $M$.

Thus, very similarly as in Section~\ref{sec:marked_maps} (Figure~\ref{fig:bij_eulerian_marked} for Eulerian maps), we obtain:

\begin{prop}
The following families are in bijection, for any $m\geq 3$ and $a\geq 0$:
\begin{itemize}
\item
marked $m$-bipartite trees whose number of marked pairs is {\large$a$}, such that the unique node-child of the root node is the rightmost child,
\item
admissible marked $m$-regular bipartite maps whose number of marked edges is {\large$a$}.
\end{itemize}
The number of black nodes in the first family corresponds to the number of black vertices in the second family.
\end{prop}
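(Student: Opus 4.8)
The plan is to establish the bijection by reusing verbatim the structure of the proof of Theorem~\ref{thm:eul} (and its marked generalization in Section~\ref{sec:marked_maps}), now substituting the minimal Eulerian orientation by the minimal 1-orientation for $m$-regular bipartite maps. The two directions of the correspondence have in fact already been spelled out in the paragraph preceding the statement, so the work of the proof is to verify that the forward map lands in $\cU_a$, that the backward map lands in $\cM_a$, and that the two are mutually inverse. First I would set up the forward direction: given $M\in\cM_a$, take its canonical 1-orientation $O$ and canonical spanning tree $T$ (these exist by the discussion of canonical orientations in Section~\ref{sec:marked_maps}, applied to the function $\alpha$ of Lemma~\ref{lem:ori_mbip}), then cut each external edge at its midpoint, declaring the end of the outgoing (resp. ingoing) half-edge an opening (resp. closing) leaf, and marking the pairs that come from marked edges.

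The key structural facts to check on the output $T'$ are exactly those guaranteed by Lemma~\ref{lem:mbiptree}: every white node has a single black child (so the node-to-parent map is a bijection, giving the $m$-bipartite degree/coloring pattern), every external edge runs from white to black (so opening leaves sit at white nodes and closing leaves at black nodes, as required of an $m$-bipartite tree), and the unique ingoing edge at $v_0$ precedes the root corner. This last point, combined with Lemma~\ref{lem:outroot} applied with $\alpha'$ on the deleted map $M'$, is what forces the root edge to be external and hence guarantees that cutting it produces a legitimate opening root leaf. The additional bookkeeping specific to the marked setting—verifying that the unmatched node-child of the root node ends up as the \emph{rightmost} child—follows from the convention that the ingoing edge at $v_0$ precedes the root corner: in the clockwise labelling of the children of the root node the tree-child (the one leading to the black child) occupies the last slot, matching the defining condition of $\cU_a$. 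The forward matching (or marked matching, for the pairs coming from marked edges) is read off directly from the cut external edges, each cut edge giving one matched pair and each marked edge giving one marked pair, so the count of marked pairs is preserved at $a$.

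For the inverse direction I would start from $T'\in\cU_a$, orient all tree-edges toward the root except those incident to opening leaves (oriented toward the leaf), and merge each matched pair into an oriented edge, marked precisely when the pair is marked. One checks that merging respects the white$\to$black orientation of every external edge and that the resulting orientation $O$ on the merged map $M$ is a compatible root-accessible 1-orientation with the prescribed ingoing edge at $v_0$, so that $M\in\cM_a$; then the identity $\Phi_M(T)=O$ (with $T$ the node-induced subtree of $T'$) certifies that $O$ is the \emph{canonical} 1-orientation and $T$ the canonical spanning tree, which is exactly the data the forward map extracts, closing the loop. Finally the preservation of the number of black nodes is immediate, since cutting and merging never create or destroy nodes and leave the black/white labelling intact.

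I expect the main obstacle to be the verification that the orientation $O$ produced by the inverse map is genuinely the \emph{minimal} (canonical) 1-orientation rather than merely \emph{some} compatible 1-orientation—that is, pinning down the equality $\Phi_M(T)=O$ in the marked case. This is where the subtleties of the cycle-reversal traversal enter, and where one must lean on the fact (established in Section~\ref{sec:marked_maps}) that deleting the marked edges reduces the canonical orientation of $M$ to the minimal orientation of $M'$ via Bernardi's bijection $\Gamma_{M'}$ (Theorem~\ref{thm:bernardi}). Once that reduction is in place, the argument is a routine transcription of the Eulerian case, with the $m$-regular bipartite degree constraints substituting for the even-degree constraints; the genuinely new content is confined to the root-corner/rightmost-child normalization handled by Lemmas~\ref{lem:outroot} and~\ref{lem:mbiptree}.
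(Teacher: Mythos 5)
Your proposal is correct and follows essentially the same route as the paper: take the canonical 1-orientation and canonical spanning tree of an admissible marked map (using Lemma~\ref{lem:outroot} to place the unique ingoing edge at $v_0$ just before the root corner, which is exactly the rightmost-child normalization), cut external edges into matched leaf pairs with marked pairs coming from marked edges, and invert by orienting tree edges toward the root and merging pairs, with Theorem~\ref{thm:bernardi} certifying via $\Phi_{M'}(T)=O'$ that the reconstructed orientation is indeed the canonical one. The only step both you and the paper leave implicit is that the structural conclusions of Lemma~\ref{lem:mbiptree} (each white vertex has exactly one black child; all external, hence all marked, edges run white-to-black) must be re-derived for the canonical spanning tree of the edge-deleted map $M'$ rather than quoted directly, which goes through by the same injectivity/counting argument.
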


If we now consider marked maps with multiplicities, then a construction similar to that of Section~\ref{sec:interpr_ri} (Figure~\ref{fig:ri_map} for Eulerian maps) yields:

\begin{prop}
For $i\geq 1$ and $m\geq 3$, the following families are in bijection:
\begin{itemize}
\item
$i$-enriched $m$-bipartite trees such that the unique node-child of the root node is its rightmost child,
\item
the family, denoted by $\cQ_i$, of admissible marked $m$-regular bipartite maps with multiplicities  adding up to less than $i$. 
\end{itemize}
The number of black nodes in the first family corresponds to the number of black vertices in the second family. The counting series of both families is $q_i(g)$, with $g$ conjugate to the number of black nodes (resp. black vertices). 
\label{prop:bij_qi}
\end{prop}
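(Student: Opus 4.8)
The plan is to transcribe the construction of Section~\ref{sec:interpr_ri} into the $m$-bipartite world and to reduce, at the very end, to the bijection $\cU_a\leftrightarrow\cM_a$ between marked $m$-bipartite trees and admissible marked $m$-regular bipartite maps established in the preceding proposition. First I would record that the left-hand family is counted by $q_i(g)$. Indeed, an $i$-enriched $m$-bipartite tree decorates each closing leaf $c$ of $i$-height $h$ by an index $\iota(c)\in[0..h-1]$, so that such a leaf carries $h$ choices of index, i.e. the weight obtained from $z_h$ by setting $z_h=h$; requiring in addition that the unique node-child of the root node be its rightmost child, and that the tree be non-nodeless (a root node must exist for the condition to make sense), selects precisely the trees enumerated by $\hq_i$. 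Hence the specialization $z_h=h$ shows that this family is counted by $q_i(g)$, matching the claim; the substance of the proposition is therefore the bijection with $\cQ_i$.

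For the forward map, starting from such a tree $T$ I would perform the \emph{root-leaf extension} of Section~\ref{sec:interpr_ri}: replace the root leaf of $T$ by an artificial branch $B$ of length $i$ built from crossing-vertices (the same device as in Sections~\ref{sec:planar} and~\ref{sec:blossommregular}), ending in a new opening root leaf and carrying, at each of its $i-1$ internal positions, one artificial opening leaf on the left and one artificial closing leaf on the right. As crossing-vertices are not genuine nodes, this leaves the black-node count untouched and does not disturb the $m$-bipartite structure of $T$; moreover the extension only lengthens the parent edge of the root node and preserves the cyclic order of its children, so the rightmost-child condition survives. Since a clockwise walk then climbs from height $1$ to height $i$ before entering $T$ and returns from height $i-1$ to $0$ afterwards, the $i$-height in $T$ of every non-artificial closing leaf becomes its height in the resulting balanced tree $T'$.

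The matching steps are then identical to Section~\ref{sec:interpr_ri}: I use the indices $\iota$ to build a partial forward matching of the non-artificial closing leaves as in Section~\ref{sec:bij_eulerian}; letting $r$ be the number of artificial opening leaves that get matched, at positions $0<i_1<\cdots<i_r<i$, and $o_1,\dots,o_r$ the (equally many) unmatched non-artificial opening leaves, I match $o_j$ to the artificial closing leaf at position $i_j$. Erasing all crossing-vertices collapses each pair of paths through position $i_j$ into a single marked pair $o_j\leftrightarrow c_j$, to which I assign the multiplicity $i_{j+1}-i_j$ (with $i_{r+1}=i$); these multiplicities sum to $i-i_1\le i-1<i$. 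What survives is a marked $m$-bipartite tree with $r$ marked pairs, still satisfying the rightmost-child condition, hence an element of $\cU_r$ equipped with multiplicities summing to less than $i$. Applying the bijection $\cU_r\leftrightarrow\cM_r$ (which carries the multiplicities along) produces an admissible marked $m$-regular bipartite map with $r$ marked edges whose multiplicities sum to less than $i$, i.e. an element of $\cQ_i$. Every step is reversible exactly as in the Eulerian case, and the black-node count is preserved throughout, so both families are counted by $q_i(g)$.

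The point requiring the most care is to check that the crossing-vertex scaffold $B$ faithfully encodes the multiplicity data in the bipartite, degree-$m$ setting and, above all, that the root-corner structure is threaded consistently: the rightmost-child condition on the tree must correspond, under $\cU_r\leftrightarrow\cM_r$, precisely to the admissibility requirement defining $\cQ_i$ (namely that the unique ingoing edge at the root vertex precede the root corner), and one must verify that extending and then erasing the root leaf returns to exactly this configuration. Once this bookkeeping at the root is pinned down, the height accounting, the multiplicity assignment, and the reversibility carry over verbatim from Section~\ref{sec:interpr_ri} and introduce no further difficulty.
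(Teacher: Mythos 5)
Your proposal is correct and follows essentially the same route as the paper, which itself only states that ``a construction similar to that of Section~\ref{sec:interpr_ri}'' yields the result: you transpose the root-leaf extension, the partial matching via the indices $\iota$, the multiplicity assignment $i_{j+1}-i_j$, and then invoke the $\cU_a\leftrightarrow\cM_a$ bijection, exactly as intended. Your explicit checks that the rightmost-child condition and the black-node count survive the extension/erasure are the right points to verify and are handled correctly.
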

The interpretation of $q_i(g)$ in terms of marked maps allows us to obtain a combinatorial proof of the identity \eqref{eq:drimbis}, as detailed in 
Appendix \ref{sec:proofdqim}.

\medskip

We now look at the analogue of the results of Section~\ref{sec:face-colored} regarding face-colored maps. Let $\tT(g,N)$ be the counting series of $N$-fully-colored rooted $m$-regular bipartite maps, related to $T(g,N)$ by the relation 
\begin{equation}
T(g,N)=\sum_{a=1}^N\binom{N}{a}\tT(g,a).
\label{TTtildem}
\end{equation}

For $a\geq 1$ we let $u_a(g)$ be the counting series of maps in $\cM_{a-1}$ and let $v_a(g)$ be the counting series for those with no marked edge incident to the root vertex. We also let $U(x):=\sum_{a\geq 1} u_a(g)x^a$ and $V(x):=\sum_{a\geq 1} v_a(g)x^a$.
\begin{lem}\label{lem:em}
Let $M$ be a marked admissible $m$-regular bipartite map, with $e_0,\ldots,e_{m-1}$ the edges incident to the root vertex $v_0$ in clockwise order,  starting from the root corner. Then $e_{m-1}$ can not be marked, and every marked edge of $M$ incident to $v_0$ is oriented out of $v_0$. 

Conversely, if $M$ has no marked edge, let $X$ be an arbitrary subset of $\{e_0,\ldots,e_{m-2}\}$, and let $M'$ be the map obtained from $M$ by additionally marking the edges in $X$ and orienting them out of $v_0$. Then $M'$ is admissible.  
\end{lem}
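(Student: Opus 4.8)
The plan is to read everything off a single admissible orientation, exploiting that since $v_0$ is white, any $1$-orientation gives it outdegree $m-1$ and hence indegree exactly $1$. Throughout I fix a compatible root-accessible $1$-orientation $O$ witnessing admissibility, so that the unique ingoing edge at $v_0$ is $e_{m-1}$ (the edge preceding the root corner in clockwise order) and $e_0,\dots,e_{m-2}$ are all outgoing at $v_0$.

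For the two forward claims I would argue as follows. Because $m\geq 3$, the white vertex $v_0$ has at least one black neighbour, and by root-accessibility that neighbour reaches $v_0$ along an oriented path avoiding the marked edges. The last edge of any such path is ingoing at $v_0$, hence must be $e_{m-1}$; therefore $e_{m-1}$ is unmarked, which is the first claim. For the second claim, compatibility of $O$ forces every marked edge to carry its prescribed orientation; but the marked edges incident to $v_0$ lie among $e_0,\dots,e_{m-2}$ (since $e_{m-1}$ is unmarked), and these are all outgoing in $O$, so each is oriented out of $v_0$.

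For the converse, the map $M$ carries no marks, so it is automatically admissible: by Lemma~\ref{lem:ori_mbip} every $1$-orientation is strongly connected, and by Lemma~\ref{lem:mbiptree} the minimal $1$-orientation $O$ has $e_{m-1}$ as its unique ingoing edge at $v_0$. Since the edges of $X\subseteq\{e_0,\dots,e_{m-2}\}$ are outgoing at $v_0$ in $O$, the same orientation $O$ is compatible with the marking of $M'$, remains a $1$-orientation, and keeps $e_{m-1}$ as the unique ingoing edge at $v_0$; the only remaining point is to verify root-accessibility of $M'$.

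The heart of the argument, and the one step deserving care, is this last verification: marking edges that all point away from $v_0$ cannot obstruct reaching $v_0$. Since $O$ is strongly connected, each vertex admits a \emph{simple} oriented path to $v_0$, and such a path meets $v_0$ only at its endpoint, hence never traverses an edge emanating from $v_0$. As every edge of $X$ is oriented out of $v_0$, these paths avoid $X$, giving root-accessibility of $M'$ and therefore its admissibility. I expect the rest to be routine bookkeeping, with the restriction to simple paths being the only genuinely substantive (though elementary) observation.
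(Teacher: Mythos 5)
Your proof is correct and follows essentially the same route as the paper's: fix the witnessing compatible root-accessible $1$-orientation, observe that the unique ingoing edge at $v_0$ is $e_{m-1}$ so root-accessibility forces it to be unmarked and compatibility forces marked edges among $e_0,\dots,e_{m-2}$ to point out of $v_0$, and for the converse reuse the same orientation after marking $X$. The only difference is that where the paper asserts the preserved root-accessibility is "clear", you supply the (correct) justification via simple paths to $v_0$ never traversing an edge leaving $v_0$.
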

\begin{proof}
By definition $M$ admits a compatible root-accessible 1-orientation $O$ where the only ingoing edge at the root vertex $v_0$ is $e_{m-1}$. 
Since $O$ is root-accessible, the edge $e_{m-1}$ has to be unmarked. 
Moreover, for any $r\in[0..m-2]$ if $e_r$ is marked then its fixed orientation is out of $v_0$, and if $e_r$ is unmarked, then we can declare it as marked (and oriented out of $v_0$) and the orientation will clearly still be root-accessible.  
\end{proof}

 It follows from Lemma~\ref{lem:em} that $U(x)=(1+x)^{m-1}V(x)$. Moreover we have from Proposition~\ref{prop:bij_qi} that $q_i(g)=\sum_{a\geq 1}\binom{i-1}{a-1}u_a(g)$, hence
\[
S(g,N):=\sum_{i=1}^Nq_i(g)=\sum_{a=1}^N\binom{N}{a}u_a(g)=[x^N](1+x)^NU(x)=[x^N](1+x)^{N+m-1}V(x).
\]
Hence, from \eqref{eq:TgN}
\begin{align*}
T(g,N)&=\sum_{r=0}^{m-1}S(g,N-r)\\
&=\sum_{r=0}^{m-1}[x^{N-r}](1+x)^{N-r+m-1}V(x)\\
&=[x^N](1+x)^N\sum_{r=0}^{m-1}x^r(1+x)^{m-1-r}V(x)\\
&=\sum_{a=1}^N\binom{N}{a}\tilde{u}_a(g),\ \ \mathrm{with}\ \tilde{u}_a(g):=[x^a]\sum_{r=0}^{m-1}x^r(1+x)^{m-1-r}V(x).
\end{align*}
With the notation of Lemma~\ref{lem:em}, a marked admissible $m$-regular bipartite map is said to have \emph{root-index} $p\in[1..m]$ if $p$ is the smallest index such that $e_{p-1}$ is unmarked. By Lemma~\ref{lem:em}, for $r\in[0..m-1]$, $[x^a]x^r(1+x)^{m-1-r}V(x)$ is the counting series of maps in $\cM_{a-1}$ whose root-index is larger than $r$. Hence $\tilde{u}_a(g)$ is the counting series of $\cM_{a-1}$ where every map with root-index
$p$ is counted $p$ times. Comparing with \eqref{TTtildem}, we obtain the remarkable identity (analogue of~\eqref{eq:bijtT})
\begin{equation}\label{eq:bijtTm}
\tT(g,N)=\tilde{u}_N(g),\ \ N\geq 1,
\end{equation}
for which a bijection is to be found for $N\geq 2$. In the case where there are just two vertices each of degree $m$, for $a\in[1..m]$ and $r\in[0..m-1]$ the number of maps in $\cM_{a-1}$ with root-index larger than $r$ is clearly $m!\binom{m-1-r}{a-1-r}$ (there are $m!$ possibilities for the underlying rooted map, and with the notation of Lemma~\ref{lem:em}, the edges $e_0,\ldots,e_{r-1}$ are marked, and one has to choose $a-1-r$ marked edges among $e_r,\ldots,e_{m-2}$).  
Hence 
\[
[g]\tilde{u}_a(g)=m!\sum_{r=0}^{m-1}\binom{m-1-r}{a-1-r}=m!\binom{m}{a-1}. 
\]
We thus obtain
\begin{equation*}
[g]T(g,N)=m!\sum_{a=1}^{m}\binom{N}{a}\binom{m}{a-1},
\end{equation*}  
a special case of a counting formula of Goulden and Slofstra~\cite{GoSl10} for face-colored rooted maps with two vertices (not necessarily bipartite), which they prove bijectively. 
 
\section{Other results}
\label{sec:conclusion}
This section starts with a brief discussion on how our bijections between rooted Eulerian maps with unfixed genus and decorated Eulerian trees
can be extended to maps with arbitrary vertex degrees. We then explain how the recursive systems combined with differential identities make it possible to automatically obtain non-linear differential equations for counting series of rooted maps of unfixed genus and bounded vertex degrees. We finally give for small vertex-degrees a unified expression of the counting series as continued fractions.
\subsection{Maps with arbitrary vertex degrees}
\label{sec:3regular}
The case of maps with vertices of arbitrary degrees can also be described in terms of blossoming trees. Here to keep formulas simple,
we focus on the case of $3$-regular maps. Let us start by listing, without proofs, the 
results of the (matrix-)integral formulation of its generating function. Denoting by $M_3(g)$ the generating function for rooted $3$-regular maps 
with a weight $g$ per vertex, we have 
\begin{equation}
M_3(g)=r_1(g)+s_0^2(g)-1
\label{eq:M3formula}
\end{equation}
where $r_1(g)$ and $s_0(g)$ are the first terms of two families of counting series $r_i(g)$, $i\geq 1$ and $s_i(g)$, $i\geq 0$
determined order by order in $g$ by the infinite recursive system
\begin{equation*}
\begin{split}
r_i(g)&=i+g\, r_i(g)\big(s_i(g)+s_{i-1}(g)\big)\ ,\quad i\geq 1\\
s_i(g)&=g\, \big(r_{i+1}(g)+r_i(g)+s_i^2(g)\big)\ , \quad i\geq 0\\
\end{split}
\end{equation*}
with the convention $r_0(g)=0$.
As easily shown from these recursion relations, we have in particular
\begin{equation*}
\begin{split}
3g\frac{d}{dg}{\rm Log}\, r_i(g)&=\big(r_{i+1}(g)-r_{i-1}(g)-2\big)+\big(s_i^2(g)-s_{i-1}^2(g)\big)\ , \quad i\geq 1\\
3g^2\frac{d}{dg}s_i(g)&=\big(r_{i+1}(g)-r_{i}(g)-1\big)-g\, s_i(g)\ , \quad i\geq 0\ . \\
\end{split}
\end{equation*}
The generating function for face-colored rooted $3$-regular maps reads then
\begin{equation*}
\begin{split}
T(g,N)&=N\big(r_1(g)+s_0^2(g)-1\big)+\sum_{i=1}^{N}(N-i)\, 3g\frac{d}{dg}{\rm Log}\, r_i(g)\\
&=(r_N(g)-N)+2\sum_{i=1}^{N-1}(r_i(g)-i)+\sum_{i=0}^{N-1}s_i^2(g)\ .\\
\end{split}
\end{equation*} 

The functions $r_1(t)$ and $s_0(t)$ may be easily interpreted as counting series for appropriate enriched blossoming trees.
As before, we may design a bijection between rooted $3$-regular maps and the same blossoming trees endowed with a forward matching.
This yields a bijective interpretation of \eqref{eq:M3formula}. 
A key ingredient of the bijection from maps to trees consists in doubling the edges so that all vertices now have degree $6$ and we can use the minimal 
Eulerian orientation of the obtained map.
Such an approach was already used in \cite[Sect.3.1]{AlPo15} in the planar case and for arbitrary degrees.

\subsection{Non-linear differential equations and continued fractions}
\label{Contfrac}
In this section we first show that the differential identity~\eqref{drivalue} combined with the equation for $r_1(t)$ makes it possible to  automatically obtain non-linear differential equations for the counting series of rooted Eulerian maps of bounded vertex-degrees. We then show how the strategy can be adapted for $m$-regular bipartite maps, and discuss the occurence of simple continued fraction expansions for small vertex-degrees, where the differential equations are first order, of the Riccati type.
 
The identity~\eqref{drivalue} is equivalent to
\begin{equation*}
r_{i+1}(t)=\frac{2t}{r_i(t)}\frac{d}{dt}r_i(t)+r_{i-1}(t)+2,\ \ i\geq 1,
\end{equation*}
which holds for arbitrary weights $g_k$ per vertex of degree $2k$ (and does not depend on these weights). This identity ensures inductively that for $i\geq 2$, $r_i(t)$ admits a rational expression in terms of $t,r_1(t),\ldots,\frac{d^{i-1}}{dt^{i-1}}r_1(t)$, which we call the $r_1$-expression of $r_i(t)$.

If we consider now, for an arbitrary $b\geq 1$, Eulerian maps with a bound $2b$ on the vertex-degree, and weight $g_k$ per vertex of degree $2k$ for $k\in[1..b]$, then the equation for $r_1(t)$ in~\eqref{eq:recurri} is
\begin{equation}\label{eq:r1}
r_1(t)=1+\sum_{k= 1}^b t^k g_{k} \sum_{\wp \in \mathcal{P}_k^{(1)}} 
\prod_{\hbox{\tiny{descending steps}}\atop h\to h-1\ \hbox{\tiny{of}}\ \wp} r_h(t),
\end{equation}
which gives an algebraic equation relating $r_1(t),\ldots,r_b(t)$ (and also involving the parameters $t,g_1,\ldots,g_b$). If in~\eqref{eq:r1} we replace each of $r_2(t),\ldots,r_b(t)$ by its $r_1$-expression, we obtain a non-linear differential equation of order $b-1$ for $r_1(t)$, which is here the counting series of rooted Eulerian maps with a weight $t$ per edge, weight $g_k$ per vertex of degree $2k$ for $k\in[1..b]$, and no vertex of degree larger than $2b$.  

Let us derive this equation in the
simple cases of $4$-regular maps (i.e., $b=2$ and $g_k=\delta_{k,2}$) and $6$-regular maps (i.e., $b=3$ and $g_k=\delta_{k,3}$).

\medskip
For 4-regular maps the equation~\eqref{eq:r1} is 
\begin{equation*}
r_1(t)=1+t^2\, r_1(t)\big(r_{2}(t)+r_1(t)\big)\ .
\end{equation*}
Replacing $r_2(t)$ by its $r_1$-expression $\frac{2t}{r_1(t)}\frac{d}{dt}r_1(t)+2$
leads to the non-linear first order differential equation
\begin{equation*}
r_1(t)=1+2t^2\, r_1(t)+t^2\, \big(r_1(t)\big)^2 + 2t^3\frac{d}{dt} r_1(t)\ .
\end{equation*}
Introducing the generating function $M_4(g)$ for rooted $4$-regular maps with a weight $g$ per vertex, we have 
$M_4(g)=r_1(t)-1$ with $t=\sqrt{g}$ since $4$-regular maps with $E$ edges have $E/2$ vertices. {}From the above equation,
we deduce immediately that
\begin{equation}
M_4(g)=3g\, +4g\, \left(M_4(g)+g \frac{d}{dg}M_4(g)\right)+g\, \big(M_4(g)\big)^2
\label{eq:M4}
\end{equation}
which determines uniquely $M_4(g)$ as a power series in $g$.
 
\medskip
For $6$-regular maps,  the equation~\eqref{eq:r1} is 
\begin{equation*}
r_1(t)=1+t^3\, r_1(t)\Big(r_{2}(t)\big(r_{3}(t)+r_{2}(t)\big)+r_1(t)\big(2r_{2}(t)+r_1(t)\big)\Big)\ .
\end{equation*}
Replacing $r_2(t)$ and $r_3(t)$ by their $r_1$-expressions and rearranging leads now to the second order non-linear differential equation 
\begin{equation*}
\!\!\!\!\!\!\!\!\! r_1(t)=1+8t^3\, r_1(t)+6t^3\, \big(r_1(t)\big)^2 +t^3\, \big(r_1(t)\big)^3+ 16t^4\frac{d}{dt} r_1(t)
+ 6t^4 r_1(t)\frac{d}{dt} r_1(t)+4t^5\frac{d^2}{dt^2}r_1(t)\ .
\end{equation*}
In terms of the generating function $M_6(g)=r_1\left(g^{1/3}\right)-1$ for rooted $6$-regular maps with a weight $g$ per vertex,
this equation  may be rewritten as
\begin{equation*}
\begin{split}
M_6(g)&=15g\, +23g M_6(g)+9g\, \big(M_6(g)\big)^2+ g\, \big(M_6(g)\big)^3+90g^2 \frac{d}{dg}M_6(g)
\\ & \qquad \qquad \qquad +18g^2\, M_6(g) \frac{d}{dg}M_6(g) +36 g^3\frac{d^2}{dg^2}M_6(g)
\end{split}
\end{equation*}
which determines $M_6(g)$ as a power series in $g$.

We can follow a quite similar strategy for $m$-regular bipartite maps, for any $m\geq 3$. The recursive system~\eqref{eq:sysQ} gives
\[
r_i(g)=i+g\sum_{a=0}^{m-2}\prod_{j=0}^{m-2}r_{i-a+j}(g),\ \ i\geq 1,
\]
which leads to
\[
r_{i+m-2}(g)=\frac{1}{g\prod\limits_{j=0}^{m-3}r_{i+j}(g)}\big(r_i(g)-i-g\sum_{a=1}^{m-2}\prod_{j=0}^{m-2}r_{i-a+j}(g)\big),\ \ i\geq 1.
\]
By induction on $i\geq m-1$ it ensures that $r_i(g)$ admits a rational expression in terms of $g,r_1(g),\ldots,r_{m-2}(g)$,  which we call the \emph{$r$-expression} of $r_i(g)$. Now the differential identity~\eqref{eq:drimbis} gives the $m-2$ equations
\[
m\frac{d}{dg}r_a(g)=\prod_{j=0}^{m-1}r_{a+j}(g)\ \ \mathrm{for}\ a\in[1..m-2].
\]
If in these equations we replace each occurence of $r_j(g)$ (for $j\geq m-1$) by its $r$-expression, then we obtain a system of $m-2$ first order non-linear differential equations on the series $r_1(g),\ldots,r_{m-2}(g)$, of the form
\begin{equation*}\label{eq:dra}
m\frac{d}{dg}r_a(g)=F_a(g,r_1(g),\ldots,r_{m-2}(g))\ \ \mathrm{for}\ a\in[1..m-2],
\end{equation*}
where each $F_a$ is an explicit rational expression. 
For example, for $m=3$, the steps are the following. The $r$-expression of $r_2(g)$  
is extracted from the equation $r_1(g)=1+ g\, r_1(g)\, r_2(g)$, and then the $r$-expression of $r_3(g)$ is extracted from 
 the equation $r_2(g)=2+gr_2(g)\big(r_3(g)+r_1(g)\big)$; then these expressions are substituted in the differential equation $3\frac{d}{dg}r_1(g)=r_1(g)r_2(g)r_3(g)$, which after rearranging 
leads to the first order non-linear differential equation
\begin{equation*}
r_1(g)=1+g\, r_1(g)+g\, \big(r_1(g)\big)^2 + 3 g^2 \frac{d}{dg} r_1(g)\ ,
\end{equation*}
or equivalently, setting $M_{3{\rm b}}(g)=r_1(g)-1$, to 
\begin{equation}
M_{3{\rm b}}(g)=2g+3g\, \Big(M_{3{\rm b}}(g)+g \frac{d}{dg}M_{3{\rm b}}(g)\Big)+g\, \big(M_{3{\rm b}}(g))^2 \ .
\label{eq:M3b}
\end{equation}
hence an equation very similar to \eqref{eq:M4}.

Finally, for rooted $3$-regular maps, the relations of Section~\ref{sec:3regular} lead to
\begin{equation}
M_3(g)=5g^2+6g^2\, \Big(M_3(g)+g^2 \frac{d}{dg^2}M_3(g)\Big)+g^2\, \big(M_3(g))^2
\label{eq:M3}
\end{equation}
(note that $M_3(g)$ is actually a power series in $g^2$).

\medskip
Equations \eqref{eq:M4}, \eqref{eq:M3b} and \eqref{eq:M3} all take the form (of the Riccati type)
\begin{equation*}
M(x)=(p-1)\, x+p\, x\, \Big(M(x)+x\frac{d}{dx}M(x)\Big)+x\, \big(M(x))^2
\end{equation*}
with respectively $M=M_4$, $M_{3\rm{b}}$ and $M_3$, $x=g$, $g$ and $g^2$, and where\footnote{The value $p=2$ is also of interest and gives 
the number of rooted trivalent maps where the edges are colored blue, green red in clockwise order around each vertex
and the root edge is blue.}$p=4$, $3$ and $6$. Note that each of these equations has a unique
solution which is a formal power series in $x$.
Remarkably, the solution of this equation is a simple continued fraction
\begin{equation*}
\begin{split}
1+M(x)=\frac{1}{1-}\overbracket{\frac{(p-1)\, x}{1-}\frac{(p+1)x}{1-}}&\overbracket{\frac{(2p-1)x}{1-}\frac{(2p+1)x}{1-}}\overbracket{\frac{(3p-1)x}{1-}\frac{(3p+1)x}{1-}}\cdots\\
&
\overbracket{\frac{(k\,p-1)x}{1-}\frac{(k\, p+1)x}{1-}}\cdots\\
\end{split}
\end{equation*}
with a regular pattern of length $2$ indexed by the set of increasing integers $k$ as shown. To prove this statement, we follow the approach of \cite{AB00} 
where a similar differential equation was discussed in the context of arbitrary maps enumerated by their number of edges (it would also be possible to use~\cite[Theo.2.2]{me62} which provides a general statement to obtain the continued fraction expansion of a solution to a differential equation of the Riccati type, which the authors apply in~\cite[Theo.3.2]{me62} to an equation similar to ours).   
We start by introducing the (unique) power series $A_k(x)$ solution of the equation 
\begin{equation*}
A_k(x)=(k\, p-1)\, x+p\, x\, \Big((2k-1)A_k(x)+x\frac{d}{dx}A_k(x)\Big)+x\, \big((k-1)\, p+1\big)\big(A_k(x))^2\ .
\end{equation*} 
Then clearly $M(x)=A_1(x)$. Consider then the quantity $B_k(x)$ defined by 
\begin{equation*}
1+A_k(x)=\frac{1}{1-(k\, p-1)\, x\, \big(1+B_k(x)\big)}
\end{equation*} 
which is a power series in $x$. It is easily checked that the above differential equation for $A_k(x)$ implies that $B_k(x)$ is solution
of 
\begin{equation*}
B_k(x)=(k\, p+1)\, x+p\, x\, \Big(2k\, B_k(x)+x\frac{d}{dx}B_k(x)\Big)+x\, \big(k\, p-1\big)\big(B_k(x))^2\ .
\end{equation*} 
Introduce then the quantity $C_k(x)$ defined by 
\begin{equation*}
1+B_k(x)=\frac{1}{1-(k\, p+1)\, x\, \big(1+C_k(x)\big)}
\end{equation*} 
which is a power series in $x$.
Then the equation for $B_k(x)$ implies that $C_k(x)$ is solution of \emph{the same differential equation} as $A_k(x)$ up to a shift $k\to k+1$.
We immediately deduce that $C_k(x)=A_{k+1}(x)$ and therefore
\begin{equation*}
1+A_k(x)=\frac{1}{1-\frac{(k\, p-1)\, x\,}{1-(k\, p+1)\, x\, \big(1+A_{k+1}(x)\big)}}\ .
\end{equation*}
The continued fraction form above for $M(x)=A_1(x)$ follows immediately. It would be nice to have a simple combinatorial explanation for the resulting
simple expressions for $M_4(g)$, $M_{3\rm{b}}(g)$ and $M_3(g)$. These expressions do not seem to be related to our blossoming tree representation of
the maps. On the other hand, the existence of a \emph{first order} differential equation for their generating functions 
 seems to be a crucial ingredient: in particular, no simple continued-fraction-like form seems to exist for $M_6(g)$.

\appendix
\section{ A proof of the identity \eqref{drivalue} by verification}
\label{sec:dri}
In order to prove \eqref{drivalue}, we first reformulate the recursion relations \eqref{eq:recurri}, originally expressed in terms of weighted Dyck paths, 
in the equivalent language of sequences. Define 
\begin{equation*}
\mathcal{P}_k=\left\{(u_1,u_2,\ldots,u_k)\in \mathbb{Z}^k\ , u_{j+1}\geq u_j-1\ , \quad 1\leq j\leq k-1\right\}
\end{equation*}
and, if $\mathcal{C}$ and $\mathcal{C'}$ denote comparison relations for integers, denote by $\mathcal{P}_k^{[\mathcal{C},\mathcal{C'}]}$
the subset of $\mathcal{P}_k$ where $u_1$ satisfies the relation $\mathcal{C}$ and $u_k$ the relation $\mathcal{C'}$. For instance
\begin{equation*}
\mathcal{P}_k^{[\geq i,=j]}=\left\{(u_1,u_2,\ldots,u_k)\in \mathcal{P}_k\ , u_{1}\geq i\ \hbox{and}\  u_k=j\right\}\ .
\end{equation*}
To each sequence $u=(u_1,u_2,\ldots,u_k)\in \mathcal{P}_k$, we associate the weight
\begin{equation*}
w(u)=\prod_{j=1}^k\, r_{u_j}(t)
\end{equation*}
where $r_i(t)$, $i\geq 1$, is defined as in Section \ref{sec:orthpol}, hence satisfies \eqref{eq:recurri}, while we set $r_i(t)=0$ for $i\leq 0$. We finally denote by
\begin{equation*}
P_k^{[\mathcal{C},\mathcal{C'}]}=\sum_{u\in \mathcal{P}_k^{[\mathcal{C},\mathcal{C'}]}} w(u)
\end{equation*}
the partition function for weighted sequences.

\medskip
With these notations, the recursion relations \eqref{eq:recurri} for the $r_i(t)$ may be rewritten as
\begin{equation}
r_i(t)=i+\sum_{k\geq 1} t^k g_{k} P_k^{[\geq i, \leq i]}\ , \quad i \geq 1\ .
\label{eq:recurribis}
\end{equation}
\begin{figure}
\begin{center}
\includegraphics[width=8cm]{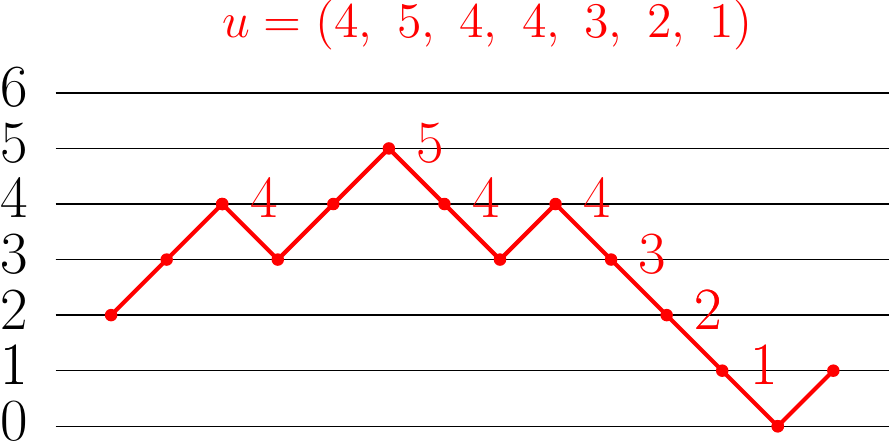}
\end{center}
\caption{A Dyck path of length $2k-1$ (here $k=7$) from height $i$ (here $i=2$) to height $i-1$ and its coding by a sequence $u$ of $\mathcal{P}_k^{[\geq i,\leq i]}$.}
\label{fig:Dycktoseq}
\end{figure}Indeed, a Dyck path in $\mathcal{P}_k^{(i)}$ (the set of Dyck paths with length $2k-1$ from height $i$ to height $i-1$) has $k$ descending steps 
$u_j\to u_{j}-1$, $j=1,\ldots, k$ and is entirely encoded by the sequence $u=(u_1,u_2,\ldots,u_k)$ of these descending steps (see Figure~\ref{fig:Dycktoseq}). 
Clearly this sequence satisfies $u_{j+1}\geq u_j-1$ for all $j$ hence belongs to $\mathcal{P}_k,$ while $u_1\geq i$ (since the Dyck path 
starts at $i$), and $u_k-1 \leq i-1$, hence $u_k\leq i$ (since the Dyck path ends at $i-1$). The sequence is therefore 
an element of $\mathcal{P}_k^{[\geq i, \leq i]}$ and the weight $w(u)$ of a sequence precisely reproduces the desired weight $r_h(t)$
for each  descending step of the Dyck path in  \eqref{eq:recurri}. More precisely, the above encoding provides a bijection between the desired Dyck paths and 
the sequences in $\mathcal{P}_k^{[\geq i, \leq i]}$ where all the elements $u_m$ are positive. This latter positivity constraint can be ignored by noting that configurations where one $u_m$ is negative or zero automatically contribute $0$ to $P_k^{[\geq i, \leq i]}$ since $r_{u_m}(t)=0$.
    
\medskip
Introducing the notation
\begin{equation*}
d_i(t):=2t \frac{d}{dt}\, r_i(t)
\end{equation*}
and differentiating \eqref{eq:recurribis} with respect to $t$ yields the relations
\begin{equation}
d_i(t)=\sum_{k\geq 1}2k\, t^k g_{k} P_k^{[\geq i, \leq i]}+\sum_{k\geq 1}t^k g_{k} P_k^{\bullet[\geq i, \leq i]}\ , \quad i\geq 1\ .
\label{eq:sumP}
\end{equation}
Here $P_k^{\bullet[\geq i, \leq i]}=2t \frac{d}{dt}P_k^{[\geq i, \leq i]}$ is the partition function of sequences $u$ in
$\mathcal{P}_k^{[\geq i, \leq i]}$ \emph{with a marked element $u_m$} and a modified weight
$\prod\limits_{j=1\atop j\neq m}^k\, r_{u_j}(t)\times d_{u_m}(t)$ where the original weight $r_{u_m}(t)$ 
for the marked element was replaced by $d_{u_m}(t)$. We will denote by $u^\bullet$ such marked sequences.
Let us now show that the above equation \eqref{eq:sumP} is satisfied if we set
\begin{equation}
d_i(t)=r_i(t)\big(r_{i+1}(t)+r_{i-1}(t)-2\big)\ , \quad i\in \mathbb{Z}\ .
\label{eq:dival}
\end{equation}
Note in particular that $d_i(t)=0$ for $i\leq 0$.
Inserting the expression \eqref{eq:dival} in \eqref{eq:sumP} yields a right hand side equal to
\begin{equation}
\begin{split}
&\sum_{k\geq 1}2k\, t^k g_{k} P_k^{[\geq i, \leq i]}+\sum_{k\geq 1}t^k g_{k} \Big(P_k^{\uparrow[\geq i, \leq i]}-P_k^{\downarrow[\geq i, \leq i]}-2k\, P_k^{[\geq i, \leq i]}\Big) \\
=&\sum_{k\geq 1}t^k g_{k} \Big(P_k^{\uparrow[\geq i, \leq i]}-P_k^{\downarrow[\geq i, \leq i]}\Big)
\end{split}
\label{eq:PP}
\end{equation}
where $P_k^{\uparrow[\geq i, \leq i]}$ (resp. $P_k^{\downarrow[\geq i, \leq i]}$) enumerates marked sequences $u^\bullet$ 
with a modified weight $r_{u_m+1}(t)r_{u_m}(t)$ (resp. $r_{u_m}(t)r_{u_m-1}(t)$) for the marked element $u_m$. The corresponding total weight for 
the whole sequence $u^\bullet$ will be denoted by $w^\uparrow(u^\bullet)$ (resp. $w^\downarrow(u^\bullet)$), which clearly satisfies
$w^\uparrow(u^\bullet)=r_{u_m+1}(t)w(u)$ (resp. $w^\downarrow(u^\bullet)=r_{u_m-1}(t)w(u)$).

\medskip
We may now easily design a bijection $\Phi$ from $\mathcal{P}_k^\bullet$ to $\mathcal{P}_k^\bullet$ (the set of marked
sequences $u^\bullet$ with $u$ in $\mathcal{P}_k$) such that 
\begin{equation}
w^\uparrow\big(\Phi(u^\bullet)\big)=w^\downarrow(u^\bullet)\ .
\label{eq:eqw}
\end{equation}
\begin{figure}
\begin{center}
\includegraphics[width=8cm]{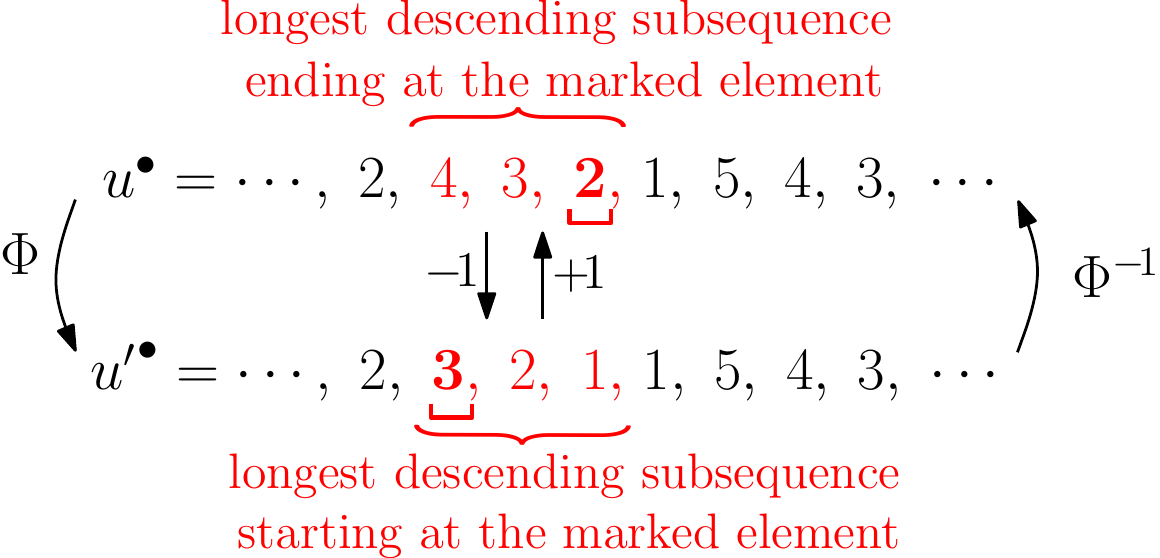}
\end{center}
\caption{An example of the action of the bijection $\Phi$ on a sequence $u^\bullet$ of $\mathcal{P}_k^\bullet$. The resulting
sequence ${u'}^\bullet$ is obtained by shifting by $-1$ the values of the elements of $u$ within the longest descending subsequence 
ending at the marked element (here underlined) and moving the marking at the first element of this modified subsequence, which
is then the the longest descending subsequence 
starting at the marked element in  ${u'}^\bullet$.}
\label{fig:phi}
\end{figure}The bijection is as follows (see Figure~\ref{fig:phi}): to obtain the sequence $u'_m$ associated with $\Phi(u^\bullet)$, we consider the \emph{longest} descending subsequence 
$(u_{m-\ell},u_{m-\ell+1},\ldots, u_m)=(u_m+\ell, u_m+\ell-1, \cdots, u_m)$ ending at the marked element $u_m$ in $u^\bullet$
and replace this subsequence by $({u'}_{m-\ell},\allowbreak{u'}_{m-\ell+1},\ldots, {u'}_m)=({u'}_m+\ell, {u'}_m+\ell-1, \cdots, {u'}_m)$ with ${u'}_m=u_m-1$, keeping
all the other elements unchanged. The new sequence ${u'}^\bullet= \Phi(u^\bullet)$ is now marked at the element ${u'}_{m-\ell}$. In other
words, we obtain $\Phi(u^\bullet)$ from $u^\bullet$ by shifting by $-1$ the elements of the longest descending subsequence ending at the marked
element in $u^\bullet$ and moving the marking at the first element of this subsequence. The sequence $\Phi(u^\bullet)$ is in 
$\mathcal{P}_k^\bullet$ since $u_{m+1}\geq u_m-1$, hence ${u'}_{m+1}=u_{m+1}\geq u_m-1={u'}_m$ and a fortiori ${u'}_{m+1} \geq {u'}_m-1$ 
while $u_{m-\ell-1}\leq u_{m-\ell}=u_{m}+\ell$
(since we chose the longest descending subsequence) hence ${u'}_{m-\ell}=u_{m}+\ell-1\geq u_{m-\ell-1}-1={u'}_{m-\ell-1}-1$.
Clearly, since ${u'}_{m+1}\geq {u'}_m$, the subsequence $({u'}_{m-\ell},{u'}_{m-\ell+1},\ldots, {u'}_m)=({u'}_m+\ell, {u'}_m+\ell-1, \cdots, {u'}_m)$ is the longest descending subsequence starting at the marked element ${u'}_{m-\ell}$ in 
$\Phi(u^\bullet)$ and $\Phi$ is therefore a bijection whose inverse consists in shifting by $+1$ the elements of the longest descending subsequence starting at
its marked element and moving the marking at the end of this subsequence. 
As for the weight $w^\uparrow\big(\Phi(u^\bullet)\big)$, the contribution of the modified subsequence is 
\begin{equation*}
\begin{split}
\big(r_{{u'}_m+\ell+1}(t) r_{{u'}_m+\ell}(t)\big)\prod_{j=0}^{\ell-1} r_{{u'}_m+j}(t)&=
\big(r_{u_m+\ell}(t) r_{u_m+\ell-1}(t)\big)\prod_{j=0}^{\ell-1} r_{u_m+j-1}(t)\\
&=\prod_{j=1}^{\ell} r_{u_m+j}(t)\big(r_{u_m}(t) r_{u_m-1}(t)\big)
\end{split}
\end{equation*}
which matches precisely the contribution of the original subsequence in the weight $w^\downarrow(u^\bullet)$, hence \eqref{eq:eqw}.

If we now restrict the set of marked sequences $u^\bullet$ to the subset $\mathcal{P}_k^{\bullet[\geq i,\leq i]}$ of $\mathcal{P}_k^\bullet$, the image of this subset
by $\Phi$
contains sequences which are not necessarily in $\mathcal{P}_k^{\bullet[\geq i,\leq i]}$. This occurs if (and only if) the sequence $u^\bullet$ starts
with $u_1=i$ and is marked at an element $u_m$ such that $u_1$ is part of its preceding longest descending subsequence. Then
the marked element of ${u'}^\bullet$ is ${u'}_1=i-1$ so that ${u'}^\bullet$ is no longer in $\mathcal{P}_k^{\bullet[\geq i,\leq i]}$. 
Otherwise stated, we have
\begin{equation*}
\begin{split}
\mathcal{A}_k^{(i)}&:=\Phi\big(\mathcal{P}_k^{\bullet[\geq i,\leq i]}\big)\setminus \mathcal{P}_k^{\bullet[\geq i,\leq i]}
\\&=\left\{{u'}^\bullet \in \mathcal{P}_k^\bullet  \ , {u'}_{1}=i-1\ , \  {u'}_k\leq i\  \hbox{and}\ {u'}_1\ \hbox{is the marked element}
\right\}\ .
\end{split}
\end{equation*}
Any sequence in $\mathcal{A}_k^{(i)}$ has a weight $w^\uparrow\big({u'}^\bullet\big)=r_i(t) w(u')$ in terms of the weight $w(u')$ of
the corresponding unmarked sequence $u'$, which is an arbitrary sequence of  $\mathcal{P}_k^{[=i-1,\leq i]}$. We deduce that 
the contribution of the pre-image by $\Phi$ of these sequences to $P_k^{\downarrow[\geq i, \leq i]}$ 
(which de facto has no compensation from $P_k^{\uparrow[\geq i, \leq i]}$) is given by 
\begin{equation*}
A_k^{(i)}=r_i(t) \times  P_k^{[=i-1,\leq i]}\ .
\end{equation*} 
A similar argument shows that the pre-image of $\mathcal{P}_k^{\bullet[\geq i,\leq i]}$
by $\Phi$
contains sequences not necessarily in $\mathcal{P}_k^{\bullet[\geq i,\leq i]}$, with 
\begin{equation*}
\begin{split}
\mathcal{B}_k^{(i)}&:=\Phi^{-1}\big(\mathcal{P}_k^{\bullet[\geq i,\leq i]}\big)\setminus \mathcal{P}_k^{\bullet[\geq i,\leq i]}
\\&=\left\{u^\bullet \in \mathcal{P}_k^\bullet  \ , u_1\geq i\ , \  u_k=i+1\  \hbox{and}\ u_k\ \hbox{is the marked element}
\right\}\ .
\end{split}
\end{equation*}
Any sequence in $\mathcal{B}_k^{(i)}$has a weight $w^\downarrow\big(u^\bullet\big)=r_i(t) w(u)$ in terms of the weight $w(u)$ of
the corresponding unmarked sequence $u$, which is an arbitrary sequence of  $\mathcal{P}_k^{[\geq i,=i+1]}$. We again deduce that 
the contribution of the image of these sequences by $\Phi$ to $P_k^{\uparrow[\geq i, \leq i]}$ (with no compensation from $P_k^{\downarrow[\geq i, \leq i]}$) 
is given by 
\begin{equation*}
B_k^{(i)}=r_i(t) \times  P_k^{[\geq i,=i+1]}\ .
\end{equation*}
\begin{figure}
\begin{center}
\includegraphics[width=8cm]{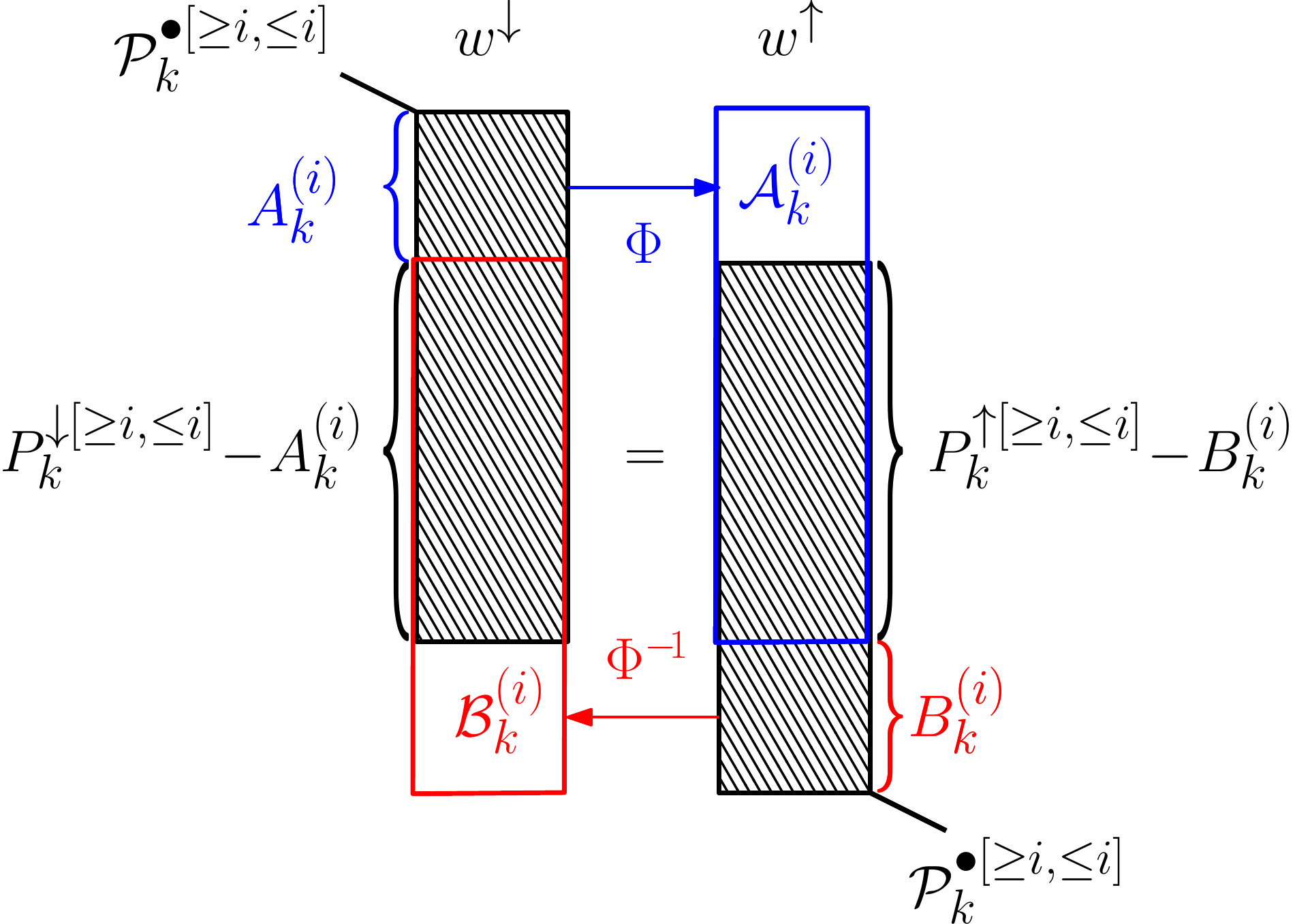}
\end{center}
\caption{A schematic representation of the relation $P_k^{\uparrow[\geq i, \leq i]}-B_k^{(i)}=P_k^{\downarrow[\geq i, \leq i]}-A_k^{(i)}$. Domains facing
each other are image of each other by the bijection $\Phi$ from $\mathcal{P}_k^{\bullet}$ into itself and have the same weight when evaluated 
with $w^\downarrow$ on the left and with $w^\uparrow$ on the right. The shaded region corresponds to the set $\mathcal{P}_k^{\bullet[\geq i,\leq i]}$.}
\label{fig:diffsym}
\end{figure}The resulting relation $P_k^{\uparrow[\geq i, \leq i]}-B_k^{(i)}=P_k^{\downarrow[\geq i, \leq i]}-A_k^{(i)}$ (see Figure~\ref{fig:diffsym}) leads, from \eqref{eq:PP},  to a right hand side in \eqref{eq:sumP} equal to
\begin{equation*}
\begin{split}
&\sum_{k\geq 1}t^k g_{k} \Big(B_k^{(i)}-A_k^{(i)}\Big) \\
=&r_i(t)\times \sum_{k\geq 1}t^k g_{k} \Big(P_k^{[\geq i,=i+1]}-P_k^{[=i-1,\leq i]}\Big)
\end{split}
\end{equation*}
\begin{figure}
\begin{center}
\includegraphics[width=10cm]{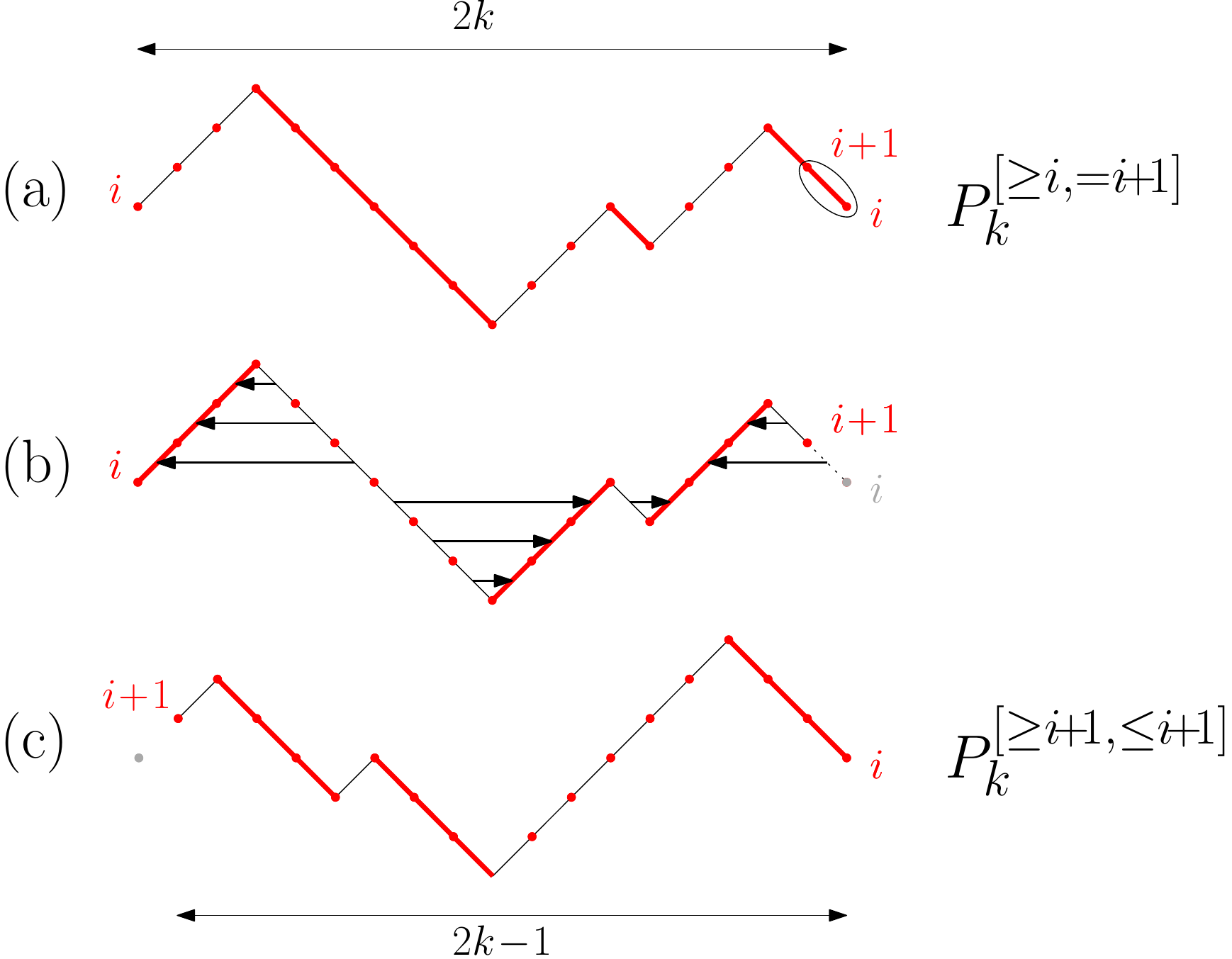}
\end{center}
\caption{A schematic picture of the identification $P_k^{[\geq i,=i+1]}=P_k^{[\geq i+1,\leq i+1]}$ (see text). Steps 
which receive a non-trivial weight $r_h(t)$ for some $h$ are indicated by thick red lines.}
\label{fig:PkPk}
\end{figure}Let us now show the identities
\begin{equation*}
P_k^{[\geq i,=i+1]}=P_k^{[\geq i+1,\leq i+1]}\ , \quad P_k^{[=i-1,\leq i]}=P_k^{[\geq i-1,\leq i-1]}\ .
\end{equation*}
To prove these identities, it is simpler to return to the Dyck path interpretation of the involved generating functions.
For instance,  from the sequence encoding of Dyck paths, $P_k^{[\geq i,=i+1]}$ is the generating function of Dyck 
paths\footnote{Again a sequence in $\mathcal{P}_k^{[\geq i,=i+1]}$ encoding for a path with negative 
heights automatically contributes $0$ to $P_k^{[\geq i,=i+1]}$ since $r_h(t)=0$ for $h\leq 0$.} starting at the height $i$ and ending with a down step $i+1\to i$, with a weight $r_h(t)$ per descending step $h\to h-1$, and with $k$ descending steps, 
hence a total length $2k$ (see Figure~\ref{fig:PkPk}(a)). 
These paths from height $i$ to height $i$
have the same number of ascending steps $h-1\to h$ as that of descending steps  $h\to h-1$ \emph{for each value of $h$}.
The quantity $P_k^{[\geq i,=i+1]}$ is therefore also the generating function of Dyck paths of length $2k$, 
starting at the height $i$ and ending with a down step $i+1\to i$, with a weight $r_h(t)$ per ascending step $h-1\to h$
or, by removing the last step (now with weight $1$ since it is descending),  
as the generating function of Dyck paths of length $2k-1$
starting at the height $i$ and ending at height $i+1$, with a weight $r_h(t)$ per ascending step $h-1\to h$ (see Figure~\ref{fig:PkPk}(b)). If we now reverse
these paths (from left to right),  $P_k^{[\geq i,=i+1]}$ is the generating function of Dyck paths 
of length $2k-1$ starting at the height $i+1$ and ending at height $i$, with a weight $r_h(t)$ per descending step $h\to h-1$, which is precisely 
the Dyck path interpretation of $P_k^{[\geq i+1,\leq i+1]}$ (see Figure~\ref{fig:PkPk}(c)). This proves the first identity above, while, as may easily be checked
by the reader, 
the second identity is proved along fully similar lines.

\medskip
The right hand side in \eqref{eq:sumP} eventually reads
\begin{equation*}
\begin{split}
&r_i(t)\times \sum_{k\geq 1}t^k g_{k} \Big(P_k^{[\geq i+1,\leq i+1]}-P_k^{[\geq i-1,\leq i-1]}\Big)\\
&= r_i(t)\Big(r_{i+1}(t)-(i+1)-\big(r_{i-1}(t)-(i-1)\big)\Big)\\
&=r_i(t)\big(r_{i+1}(t)-r_{i-1}(t)-2\big)
\end{split}
\end{equation*}
where we used \eqref{eq:recurribis} for $i\to i+1$ and $i\to i-1$, with $i\geq 1$ (note that the relation \eqref{eq:recurribis}, valid
for $i\geq 1$ may in practice be extended to $i=0$ as it yields $0=0$). This precisely matches the expected value $d_i(t)$ for the left hand side of 
\eqref{eq:sumP} with our Ansatz \eqref{eq:dival}. The expression \eqref{eq:dival} for $d_i(t)$ is therefore a solution
of the system \eqref{eq:sumP}, and is a power series in the variable~$t$. Since \eqref{eq:sumP} determines $d_i(t)$ uniquely
(order by order) as a formal power series in $t$, we deduce that the desired relation \eqref{eq:dival} is indeed satisfied. This is clearly equivalent to \eqref{drivalue}.

\section{A combinatorial proof of the identity \eqref{drivalue}}
\label{sec:driproof}
For $i\geq 1$, we let $\cR_i'$ be the family of maps in $\cR_{i}$ with a secondary marked half-edge $h$; note that the counting series of $\cR_i'$ is $2t\frac{d}{dt}r_i(t)$.  
We define $\cF_i$ as the subfamily of maps $M$ in $\cR_i$ where the root vertex has degree~$4$, and such that if we let $e_0,e_1,e_2,e_3$ be the edges containing the $4$ incident half-edges in clockwise order around the root vertex $v'$ (starting from the root corner), then $e_1$ is not a loop nor marked, and $M$ admits a compatible Eulerian orientation where $e_0$ and $e_1$ are going out of $v'$. 

\begin{figure}
\begin{center}
\includegraphics[width=3cm]{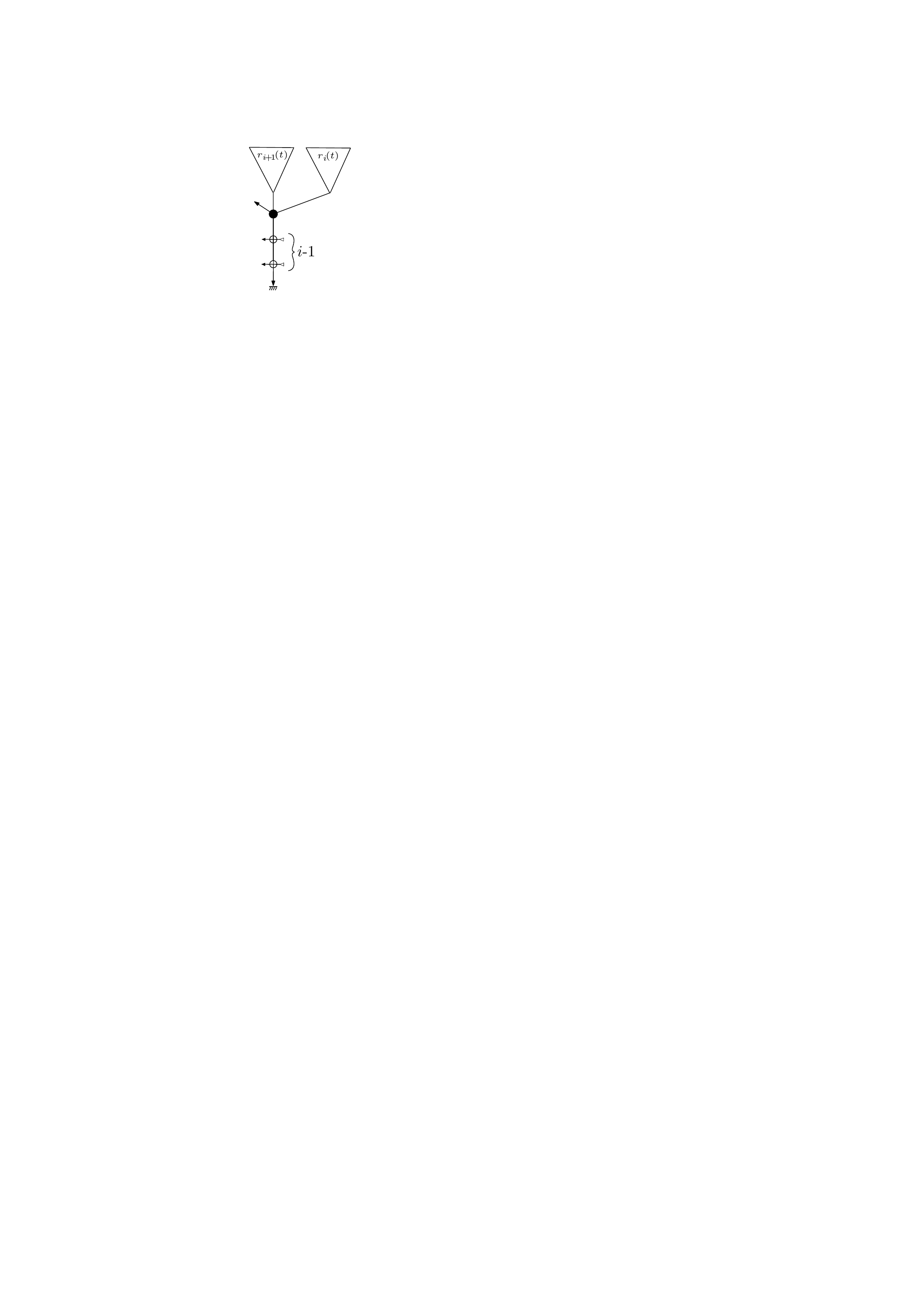}
\end{center}
\caption{The configuration of an $i$-enriched Eulerian tree corresponding to a map in $\overline{\cF}_i$ (with its root leaf extended into
a branch of length $i$).}
\label{fig:tree_riprime}
\end{figure}

\begin{lem}\label{lem:counting_fi}
The counting series of $\cF_i$ is $t^2g_2 r_i(t)(r_{i+1}(t)-r_{i-1}(t)-2)$.   
\end{lem}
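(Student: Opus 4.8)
The plan is to run the map $M\in\cF_i$ through the bijection of Proposition~\ref{prop:ri} and read off the series of $\cF_i$ from the resulting family of $i$-enriched Eulerian trees. First I would note that the defining condition of $\cF_i$ — that $M$ admits \emph{some} compatible Eulerian orientation with $e_0,e_1$ outgoing at the root vertex $v'$ — is by Lemma~\ref{lem:outroot} equivalent to $e_0,e_1$ being outgoing in the canonical Eulerian orientation that underlies the bijection. Hence, under Proposition~\ref{prop:ri}, the maps of $\cF_i$ correspond exactly to the $i$-enriched Eulerian trees whose root node $v'$ has degree $4$ and whose (shifted) root path has profile $\mathrm{up}$-$\mathrm{down}$-$\mathrm{down}$: the edge $e_1$ becomes an opening leaf, while $e_2,e_3$ subtend subtrees $T_1,T_2$ that are respectively $(i+1)$-balanced and $i$-balanced. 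The root node carries weight $t^2g_2$, so before imposing "$e_1$ not a loop nor marked" these trees are enumerated by $t^2g_2\,r_{i+1}(t)\,r_i(t)$.

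It then remains to subtract the contribution of the excluded trees, which I would collect into the family $\overline{\cF}_i$ depicted in Figure~\ref{fig:tree_riprime}. Using the identification of matching-assignments with forward matchings (Section~\ref{sec:bij_eulerian}) together with the branch-extension closure of Section~\ref{sec:interpr_ri}, I would translate the two forbidden situations into tree language: $e_1$ yields a \emph{loop} exactly when its opening leaf is matched to a closing leaf incident to $v'$, i.e. to the direct closing leaf sitting at $e_2$ or at $e_3$ (so that $T_1$, resp. $T_2$, is nodeless); and $e_1$ yields a \emph{marked} edge exactly when its opening leaf is matched through the artificial branch $B$ rather than to a genuine closing leaf of $T$. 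The goal is to show that the series of $\overline{\cF}_i$ is $t^2g_2\,r_i(t)\,(r_{i-1}(t)+2)$, since subtracting this from $t^2g_2\,r_{i+1}(t)\,r_i(t)$ leaves exactly $t^2g_2\,r_i(t)\,(r_{i+1}(t)-r_{i-1}(t)-2)$, as claimed.

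To obtain the series of $\overline{\cF}_i$ I would set up a weight-preserving local surgery at $v'$ that reroutes the matched partner of $e_1$ and lowers by one the heights of the two subtrees hanging below it. Generically this identifies the bad trees with the degree-$4$ configurations whose root path has profile $\mathrm{down}$-$\mathrm{down}$-$\mathrm{up}$ — whose subtrees are $i$-balanced and $(i-1)$-balanced, hence enumerated by $t^2g_2\,r_i(t)\,r_{i-1}(t)$ — under the height shift $r_{i+1}\mapsto r_i$ and $r_i\mapsto r_{i-1}$. The two extreme matchings of the $e_1$-leaf that cannot be rerouted in this way (the innermost loop and the boundary branch-matching) survive as two exceptional subfamilies, each forcing all data except a single free $i$-balanced subtree and therefore each enumerated by $t^2g_2\,r_i(t)$. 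Adding the three contributions gives $t^2g_2\,r_i(t)\,(r_{i-1}(t)+2)$.

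The main obstacle is this last step: making the surgery precise and, above all, correctly isolating the "$-2$", i.e. verifying that exactly two degenerate matchings of the $e_1$-leaf escape the correspondence with the $\mathrm{down}$-$\mathrm{down}$-$\mathrm{up}$ configurations and that each leaves behind precisely one free $i$-balanced subtree (everything else being forced). Care is also needed to check that the height shifts $i+1\to i$ and $i\to i-1$ remain compatible with the multiplicity-budget constraint (total multiplicity $<i$) defining $\cR_i$, so that the rerouted object is again admissible with multiplicities in range; I expect this to follow from the observation that the surgery removes exactly one unit of vertical room from the branch $B$ and from the affected subtree simultaneously.
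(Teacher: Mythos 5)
Your setup is exactly the paper's: by Lemma~\ref{lem:outroot} the condition defining $\cF_i$ is equivalent to the opening leaf $\ell$ at the root node being its left child, so the relaxed family (allowing $e_1$ to be a loop or marked) has series $t^2g_2\,r_{i+1}(t)r_i(t)$, and you correctly translate the two excluded situations into tree language ($e_1$ is a loop iff $\ell$ is matched to the closing leaf sitting directly at $e_2$ or at $e_3$; $e_1$ is marked iff $\ell$ survives all non-artificial closing leaves and gets matched along the branch $B$). Up to that point you are in step with the paper's argument.

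The gap is in your final count of the excluded family. Your partition into ``generic surgery cases'' of weight $t^2g_2\,r_i(t)r_{i-1}(t)$ plus two exceptions of weight $t^2g_2\,r_i(t)$ each, the exceptions being ``the innermost loop and the boundary branch-matching'', cannot be correct as stated: the marked configurations \emph{by themselves} are already enumerated by $t^2g_2\,r_i(t)r_{i-1}(t)$ (the requirement that $\ell$ is never popped keeps the stack nonempty, and deleting $\ell$ from the stack turns $T_1$ from $(i+1)$-balanced into $i$-balanced and $T_2$ from $i$-balanced into $(i-1)$-balanced, reducing each closing leaf's index range by one — this is the paper's ``decrease $i$ by $1$'' step), so there is no room for your surgery to absorb any loop configurations in addition; moreover the subfamily of marked configurations in which $\ell$ lands on a \emph{specific} artificial closing leaf is not a clean $t^2g_2\,r_i(t)$ block, since which artificial leaf receives $\ell$ is determined by the rest of the data rather than being a free parameter. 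The two $t^2g_2\,r_i(t)$ terms come instead from the two \emph{loop} positions: $\ell$ matched to the closing leaf at $e_2$ (index forced to $0$, $T_2$ free and $i$-balanced), and $\ell$ matched to the closing leaf at $e_3$ ($T_2$ nodeless with forced index, $T_1$ free but forbidden to pop $\ell$, hence counted by $r_i(t)$ via the same stack-deletion argument). Counting the three cases separately — marked, loop at $e_2$, loop at $e_3$ — yields $t^2g_2\,r_i(t)\bigl(r_{i-1}(t)+2\bigr)$ directly and dispenses with the detour through down-down-up configurations; this is what the paper does.
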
 
\begin{proof}
The counting series of $i$-enriched Eulerian trees where the root node has degree $4$ is $t^2 g_2r_i(g)(r_{i+1}(g)+r_i(g)+r_{i-1}(g))$. The root node has $3$ children one of which is an opening leaf $\ell$, and the $3$ terms correspond to $\ell$ being the left child, middle child, and right child respectively. By Lemma~\ref{lem:outroot}, $\ell$ is the left child iff the corresponding map in $\cR_i$ (whose root vertex has degree $4$) has a compatible root-accessible Eulerian orientation such that the edges $e_0$ and $e_1$ are outgoing. Hence, if we let $\overline{\cF}_i$ be the family defined as $\cF_i$ but allowing $e_1$ to be a loop or to be marked, then the counting series of $\overline{\cF}_i$ is   
$t^2g_2 r_{i+1}(t)r_{i}(t)$, see Figure~\ref{fig:tree_riprime}. The contribution where $e_1$ is marked 
corresponds to having $\ell$ unmatched in the first step of the construction of Section~\ref{sec:interpr_ri} (2nd drawing in Figure~\ref{fig:ri_map}). This amounts to decreasing~$i$ by~$1$ (we can replace $\ell$ by the first artificial opening leaf along the extended branch), hence the counting series for those maps is $t^2g_2 r_i(t)r_{i-1}(t)$. Finally, if $e_1$ is an unmarked loop, then it means that $\ell$ is matched with a closing leaf that is adjacent to the root node. This closing leaf can be the middle or right child, and in both cases the counting series is easily seen to be $t^2g_2r_i(t)$. By subtraction we conclude that the counting series of $\cF_i$ is $t^2g_2 r_i(t)(r_{i+1}(t)-r_{i-1}(t)-2)$. 
\end{proof}

\begin{figure}
\begin{center}
\includegraphics[width=14cm]{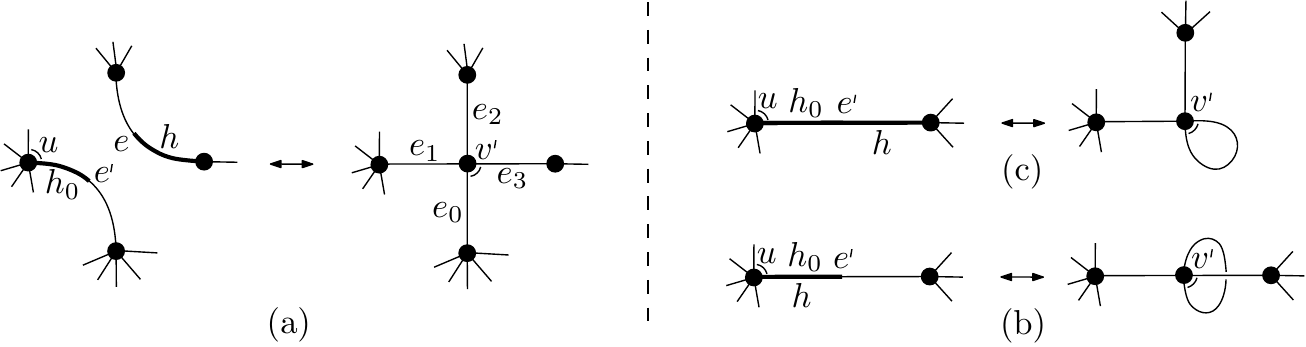}
\end{center}
\caption{The rules (in each type $\mathfrak{a}$, $\mathfrak{b}$ or $\mathfrak{c}$) in the bijection from $\cR_i'$ to $\cF_{i}$.}
\label{fig:rifi}
\end{figure}

We now describe a bijection between $\cR_i'$ and $\cF_i$ such that from $\cR_i'$ to $\cF_i$ the number of edges is increased by $2$, the number of vertices of degree $4$ is increased by~$1$ and the number of vertices of degree $2k$ is preserved for $k\neq 2$ (given Lemma~\ref{lem:counting_fi}, this will give a bijective proof of~\eqref{drivalue}). Moreover the number of marked edges and their multiplicities are also preserved. We will distinguish three types ($\mathfrak{a},\mathfrak{b},\mathfrak{c}$) in the two families, and describe a bijection in each type. 

A map in $\cR_i'$ is said to be of type $\mathfrak{a}$ if the secondary marked half-edge $h$ is not on the same edge as the root half-edge $h_0$, of type $\mathfrak{b}$ if $h=h_0$, and type $\mathfrak{c}$ if $h$ is opposite to $h_0$ on the root edge. 
A map in $\cF_i$ is said to be of type $\mathfrak{a}$ if there is no loop at $v'$. Otherwise if there is a loop at $v'$ it means that either $e_0=e_2$ or $e_0=e_3$ (we can not have $e_2=e_3$, since this would make impossible to have an Eulerian orientation with both $e_0$ and $e_1$ outgoing). In the first (resp. second) case, the map is said to be of type $\mathfrak{b}$ (resp. of type $\mathfrak{c}$).   

\begin{figure}
\begin{center}
\includegraphics[width=14cm]{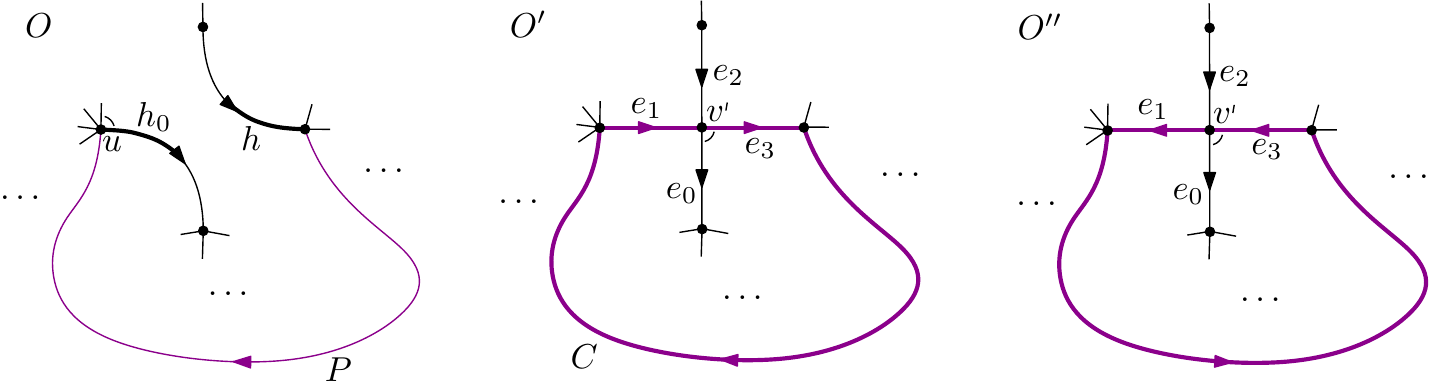}
\end{center}
\caption{In type $\mathfrak{a}$, the existence of a path from the vertex at $h$ to the root vertex guarantees that $M'$ is in $\cF_i$.}
\label{fig:reverse}
\end{figure}

In type $\mathfrak{a}$, the bijection from $\cR_i'$ to $\cF_i$ is as shown in Figure~\ref{fig:rifi}(a). 
Let $M\in\cR_i'$, and let $e'$ be the root edge and $e$ the edge of $h$. We join $e'$ and $e$ at their respective middles, so as to create a new vertex $v'$ of degree $4$ taken as the new root vertex, and so that the edges $(e_0,e_1,e_2,e_3)$ around $v'$ respectively arise from $(\mathrm{opp}(h_0),h_0,\mathrm{opp}(h),h)$. If $e'$ is marked, we mark $e_0$ (with same multiplicity as $e'$) and orient it out of $v'$ (and keep $e_1$ unmarked). If $e$ is marked then we mark exactly one of $\{e_2,e_3\}$ (with same  multiplicity as $e$) and orient it toward $v'$: we mark $e_2$ if $h$ is ingoing and mark $e_3$ if $h$ is outgoing. 
The obtained map $M'$ has root vertex $v'$ of degree $4$ and no loop at $v'$. It has the same number of marked edges as in $M$, and with same multiplicities.

We now show that $M'$ is in $\cF_i$, i.e., that it admits a compatible root-accessible Eulerian orientation such that $e_0$ and $e_1$ are going out of $v'$. By definition of $\cR_i$, $M$ admits a compatible root-accessible Eulerian orientation $O$ such that $e'$ is going out of the root vertex $u$ of $M$. 
Let $O'$ be the induced compatible Eulerian orientation of $M'$ (at $v'$, $e_0$ is outgoing, $e_1$ is ingoing, and exactly one of $\{e_2,e_3\}$ is outgoing, which is $e_3$ if $h$ is ingoing and $e_2$ if $h$ is outgoing). Clearly $O'$ is root-accessible. It remains to show that $M'$ admits a compatible Eulerian orientation where both $e_0$ and $e_1$ are outgoing. Since $O$ is root-accessible, in $M$ there is an oriented path $P$ of unmarked edges from the end of $e$ to
the root vertex $u$. In $O'$ this path extends into a directed cycle $C$ of unmarked edges passing at $v'$ by $e_1$ and by one of $\{e_2,e_3\}$, see Figure~\ref{fig:reverse}. Reversing this cycle we obtain a compatible Eulerian orientation $O''$ of $M'$ (which is also root-accessible) where both $e_0,e_1$ are outgoing at $v'$. Hence $M'$ is in $\cF_i$
of type $\mathfrak{a}$.

The inverse mapping, from a map $M'\in\cF_i$, applies the operation of Figure~\ref{fig:rifi}(a) in the reverse direction.  
Concerning marked edges, if $e_0$ is marked it is going out of $v'$, then we mark the new root edge $e'$ (with same multiplicity as $e_0$) and orient it out of $u$. If $e_2$ or $e_3$ is marked (not both can be marked otherwise any Eulerian orientation where $e_0,e_1$ are outgoing would not be root-accessible) it has to be ingoing at $v'$ (since there is an Eulerian orientation with $e_0,e_1$ outgoing at $v'$), then we mark $e$ (with same multiplicity) and orient it with the same direction. We have to prove that the obtained map $M$ is admissible. By definition $M'$ admits a compatible root-accessible Eulerian orientation $O''$ where $e_0,e_1$ are going out of $v'$. By accessibility there exists an oriented path $P'$ of unmarked edges from $u$ (the extremity $\neq v'$ of $e_1$) to $v'$; note that the last edge of $P$ is $e_2$ or $e_3$. Let $C$ be the directed cycle made of $P'$ plus the edge $e_1$. Returning $C$ we obtain a new compatible Eulerian orientation $O'$.  
 This orientation clearly induces a compatible Eulerian orientation $O$ of $M$, where the root edge $e'$ is going out of $u$. Moreover the presence of the directed cycle $C$ in $O'$ easily ensures that $O$ is root-accessible. Hence $M$ is in $\cR_i'$ of type $\mathfrak{a}$.

In type $\mathfrak{b}$ the bijection is as shown in Figure~\ref{fig:rifi}(b): we create a new vertex $v'$ in the middle of $e'$ and attach a new loop at it, so that $v'$ is the new root vertex and the loop-edge is $e_0=e_2$. The edges $e_1,e_3$ remain unmarked, and if $e'$ is marked then we mark the loop-edge (with same multiplicity) and orient it so that the new root half-edge is outgoing. The obtained map $M'$ clearly admits a compatible root-accessible Eulerian orientation $O'$ with $e_0$ outgoing and $e_1$ ingoing at $v'$. By definition $M$ admits a compatible root-accessible Eulerian orientation where $e'$ is outgoing at the root vertex $u$ of $M$, which induces a compatible Eulerian orientation of $M'$ with $e_1$ ingoing (and $e_3$ outgoing) at $v'$. As before the existence in $O$ of a directed path of unmarked edges from the end of $e'$ to its origin ensures that in $O'$ there is a directed cycle passing at $v'$ by $e_1$ and $e_3$, and reversing this cycle yields a compatible Eulerian orientation $O''$ of $M'$ with $e_1$ (and the root half-edge) outgoing at $v'$, so that $M'\in\cF_i$. The inverse bijection, from a map $M'\in\cF_i$ of type $\mathfrak{b}$, applies the reverse operation of Figure~\ref{fig:rifi}(b), and the verification that one obtains a map in $\cR_i'$ relies again on similar arguments as in type $\mathfrak{a}$. Finally, in type $\mathfrak{c}$ the bijection is completely analogous to type $\mathfrak{b}$ up to exchanging the roles of $e_2$ and $e_3$, see Figure~\ref{fig:rifi}(c).

\section{Derivation of \eqref{eq:sysrq} from bi-orthogonal polynomials}
\label{sec:mregularint}
As in Section \eqref{sec:intrep}, we may obtain an integral representation for the generating function of $m$-regular bipartite maps with a weight $g$
per black vertex. Here we start by setting, for $p,q\in \mathbb N$
\begin{equation}
\int dx\, dy\, {\rm e}^{-x\, y}\, x^p\, y^q := p!\,  \delta_{p,q} 
\label{eq:defint}
\end{equation}
which, by linearity, defines an integral $\int dx dy\, F(x,y)$ for any polynomial in the variables $x$ and $y$. This definition is moreover compatible with the
notion of integration by parts, as it allows to set
\begin{equation*}
\begin{split}
\int dx\, dy\frac{\partial}{\partial y}\left(\, {\rm e}^{-x\, y}\, y^q\right)\, x^p\, y^s &=
\int dx\, dy\, {\rm e}^{-x\, y}\, (-x\, y^q+q\, y^{q-1})x^p\, y^s\\
&=-(p+1)!\, \delta_{p+1,q+s}+q\, p!\, \delta_{p,q+s-1}\\
&=-(p+1-q)\, p!\, \delta_{p,q+s-1}=- s\, p!\delta_{p,q+s-1}\\
&=-\int dx\, dy\, {\rm e}^{-x\, y}\, y^q x^p\, (s\, y^{s-1})\\
&=-\int dx\, dy\, {\rm e}^{-x\, y}\, y^q x^p\, \frac{\partial}{\partial y}y^s\ .\\
\end{split}
\end{equation*}
We may then consider the integral  
\begin{equation}
h_0(g)=\int dx\, dy\,  {\rm e}^{-x\, y}\, {\rm e}^{V(g,x,y)}\ ,
\quad 
V(g,x,y)=\sqrt{g}\left(\frac{x^{m}}{m}+\frac{y^{m}}{m}\right)
\label{eq:defh0c}
\end{equation}
understood as a formal power series in $g$. Note that, since the integral of a monomial in $x$ and $y$ is non-zero only if the degree in $x$
matches that in $y$, only terms with integer powers in $g$ survive in the expansion in $\sqrt{g}$. {}From the definition \eqref{eq:defint}, where $p!\,  \delta_{p,q}$ 
corresponds to the number of pairings between a set of $p$ objects and a set of $q$ objects, $h_0(g)$ 
may be identified in terms of maps as $h_0(g)=1+\sum_{V_\bullet\geq 1}g^{V_\bullet} Z_{V_\bullet}$ where $Z_{V_\bullet}$ denotes the generating 
function for possibly disconnected $m$-regular bipartite maps
with a total of $V_\bullet$ black vertices, and with suitable symmetry factors. As before, we may instead consider the associated
generating functions $W_{V_\bullet}$ for connected rooted $m$-regular bipartite maps with $V_\bullet$ black vertices, with power series
\begin{equation}
\sum_{V_\bullet\geq 1}g^{V_\bullet} W_{V_\bullet}=m\, g\frac{d}{dg}{\rm Log}\,  h_0(g)\ .
\label{eq:genrm}
\end{equation}  
To get a recursive system determining this counting series, we may introduce bi-orthogonal polynomials 
$p_i:=p_i(g,x)$, $i\in \mathbb{N}$ and $\tilde{p}_j:=\tilde{p}_j(g,y)$, $j\in \mathbb{N}$ satisfying
\begin{equation*}
\langle p_i | \tilde{p}_j \rangle =h_i(g) \, \delta_{i,j}\ ,  \qquad p_i(g,x)=x^i+\sum_{k<i} a_{i,k}(g) x^k, \ \tilde{p}_j(g,y)=y^j+\sum_{k<j} \tilde{a}_{j,k}(g) y^k
\end{equation*}
with respect to the ``scalar product''
\begin{equation*}
\langle F | G \rangle:=\int dx\, dy\,  {\rm e}^{-x\, y}\, {\rm e}^{V(g,x,y)}\  F(g,x)\, G(g,y)\ .
\end{equation*}
Note that $h_0(g)$ matches its definition in \eqref{eq:defh0c} since $p_0(g,x)=1$ and $\tilde{p}_0(g,y)=1$. {}From the obvious $x\leftrightarrow y$ 
symmetry, we also have $\tilde{p}_j(g,y)=p_j(g,y)$ for all $j$ so we have to deal in practice with a single family of polynomials. We also have the property
that $\langle x^i| y^j\rangle=0$ if $i\neq j\mod[m]$ (since $V(g,x,y)$ is a polynomial in $x^m$ and $y^m$), from which we deduce that $p_i(g,x)$ contains only powers $x^k$ of $x$ such that 
$k=i\mod[m]$. We may then,
as was done in Section~\ref{sec:orthpol}, write the decomposition
\begin{equation}
x\, p_i(g,x)=p_{i+1}(g,x)+\frac{1}{\sqrt{g}}q_{i-m+2}(g)\, p_{i-m+1}(g,x)
\label{eq:xpim}
\end{equation}  
(the choice of the index for $q_i$ and the normalization by $\frac{1}{\sqrt{g}}$ are for future convenience) where $q_i(g)=0$ for $i\leq 0$. 
The absence in the right hand side of terms proportional to $p_{i-k\, m+1}(g,x)$ with $k > 1$ is a consequence of the
identity 
\begin{equation*}
\!\!\!\! \frac{\partial}{\partial y} w(g,x,y)+\left(x-\sqrt{g}\, y^{m-1}\right) w(g,x,y)=0\ ,
\quad w(g,x,y):= {\rm e}^{-x\, y+V(g,x,y)}
\end{equation*}
which, for $j\leq i+1$, implies the relation
\begin{equation*}
\begin{split}
\!\!\!\!\!\!\!\!\!\!\!\!\!\!\!\!\!\langle x p_{i} | p_j \rangle&= \int dx\, dy\,  w(g,x,y)\, x\, p_i(g,x)p_j(g,y)\\
&= \int dx\, dy\, \left(-\frac{\partial}{\partial y} w(g,x,y)+\sqrt{g}\, y^{m-1}w(g,x,y)\right)  p_i(g,x)p_j(g,y)\\
&=\langle p_i | \frac{\partial}{\partial y}p_j \rangle+\sqrt{g}\langle p_i |  y^{m-1}\, p_j\rangle\ .
\end{split}
\end{equation*}
This immediately gives $\langle x p_{i} | p_j \rangle=0$ for $j< i+1-m$, hence the absence of terms $p_{i-k\, m+1}(g,x)$ with $k > 1$ in \eqref{eq:xpim}.
For $j=i-m+1$,  it leads to
\begin{equation}
\frac{1}{\sqrt{g}}q_{i-m+2}(g)\, h_{i-m+1}(g)=\sqrt{g}\langle p_i |  y^{m-1}\, p_{i-m+1}\rangle=\sqrt{g}\, h_i(g)
\label{eq:Qmsys}
\end{equation}
while, for $j=i+1$, we get
\begin{equation}
\begin{split}
h_{i+1}(g)&=(i+1)\, h_i(g)+\sqrt{g}\langle p_i |  y^{m-1}\, p_{i+1}\rangle\\
&=h_i(g)\left((i+1)+\sqrt{g}\sum_{a=0}^{m-2}\frac{1}{\sqrt{g}}q_{i+1-a}(g)\right)\\
\end{split}
\label{eq:hmsys}
\end{equation}
where the last equation was obtained by repeated actions of \eqref{eq:xpim}.

\medskip
Defining $r_i(g)=h_i(g)/h_{i-1}(g)$ as before, we obtain the desired system of equations
\begin{equation}
r_i(g)=i+ \sum_{a=0}^{m-2}q_{i-a}(g)\ , \quad
q_i(g)=g\, \prod_{a=0}^{m-2}r_{i+a}(g)\ , \quad i\geq 1\ .
\label{eq:sysQ}
\end{equation}
In the first equation, it is implicitly assumed that $q_j(g)=0$ for $j\leq 0$ and in particular
\begin{equation*}
r_1(g)=1+q_{1}(g)\ .
\end{equation*}
The system \eqref{eq:sysQ} is nothing but \eqref{eq:sysrq}, and determines recursively all the $r_i(g)$ as power series in $g$.

\medskip
Let us finally make the connection between $r_1(g)$ and the desired map generating function \eqref{eq:genrm}.
We have
\begingroup
\allowdisplaybreaks
\begin{align*}
m\, g\frac{d}{dg}{\rm Log}\,  h_0(g)
&= \frac{1}{h_0(g)}
\int dx\, dy\,  {\rm e}^{-x\, y}\, m\, g\, \frac{\partial}{\partial g} {\rm e}^{V(g,x,y)}\\
&= \frac{1}{h_0(g)} \int dx\, dy\,  {\rm e}^{-x\, y}\, \frac{1}{2}\left(x \frac{\partial}{\partial x}
+y\frac{\partial}{\partial y}\right){\rm e}^{V(g,x,y)}
\\
&= \frac{1}{h_0(g)} \int dx\, dy\,  {\rm e}^{-x\, y}\, y \frac{\partial}{\partial y}
{\rm e}^{V(g,x,y)}
\\
&= -\frac{1}{h_0(g)}
\int dx\, dy\,  \frac{\partial}{\partial y}\left( y\, {\rm e}^{-x\, y}\right) {\rm e}^{V(g,x,y)}\\
\\
&= -\frac{1}{h_0(g)}
\int dx\, dy\, \left(1-x\,y\right) w(g,x,y)\\
&= -1+\frac{1}{h_0(g)}\langle x p_0|y p_0\rangle\\
&= -1+\frac{h_1(g)}{h_0(g)}\\
\end{align*}
\endgroup
since $p_1(g,x)=x$ (which is the only power $x^k$ with $k\leq 1$ and $k=1\mod[m]$), hence $\langle x p_0|y p_0\rangle =\langle p_1|p_1\rangle=h_1(g)$.
We end up with 
\begin{equation*}
r_1(g)=1+m\, g\frac{d}{dg}{\rm Log}\,  h_0(g)=1+\sum_{V_\bullet \geq 1}g^{V_\bullet} W_{V_\bullet}
\end{equation*} 
which again identifies the first term $r_1(g)$ of our recursive system \eqref{eq:sysQ} with the desired map generating function.

\section{ A proof of the identity \eqref{eq:drimbis} by verification}
\label{sec:drim}
To prove the relation \eqref{eq:drimbis},
we start with the system \eqref{eq:sysQ} which we differentiate w.r.t. $g$, giving
\begin{equation}
dr_i(g)=\sum_{a=0}^{m-2} dq_{i-a}(g)\ , \qquad 
dq_i(g)=q_i(g)\left(1+ \sum_{a=0}^{m-2} \frac{dr_{i+a}(g)}{r_{i+a}(g)}\right)
\label{eq:sysQder}
\end{equation}
with
\begin{equation*}
dr_i(g):= g\frac{d}{dg}r_i(g)\, \quad dq_i(g):=g\frac{d}{dg}q_i(g)\ .
\end{equation*}
Let us show that \eqref{eq:sysQder} is satisfied if we set
\begin{equation}
dr_i(g)=\frac{1}{m}r_i(g)\big(q_{i+1}(g)-q_{i-m+1}(g)\big)\ .
\label{eq:dimsolu}
\end{equation}
We first compute the resulting value of $dq_i(g)$ from the second equation in \eqref{eq:sysQder}.
Inserting \eqref{eq:dimsolu} in \eqref{eq:sysQder} yields
\begin{equation*}
\begin{split}
dq_i(g)&=q_i(g)\left(1+ \frac{1}{m}\sum_{a=0}^{m-2} \big(q_{i+a+1}(g)-q_{i+a-m+1}(g)\big) \right)\\
&=q_i(g)\left(1+ \frac{1}{m} \Big(r_{i+m-1}(g)-(i+m-1)-\big(r_{i-1}(g)-(i-1)\big)\Big)\right)\\
&=\frac{1}{m}q_i(g)\big( r_{i+m-1}(g)-r_{i-1}(g)\big)\ .\\
\end{split}
\end{equation*}
{}From the expression \eqref{eq:sysQ} of $q_i(g)$, we deduce
\begin{equation*}
dq_i(g)=\frac{g}{m}\big(\pi_{i}(g)-\pi_{i-1}(g)\big)\ , \qquad \pi_i(g)= \prod_{a=0}^{m-1}r_{i+a}(g)\ .
\end{equation*}
Plugging this value in the right hand side of the first equation in \eqref{eq:sysQder}, the corresponding sum is therefore
telescopic, with value
\begin{equation*}
\begin{split}
\frac{g}{m}\big(\pi_{i}(g)-\pi_{i-m+1}(g)\big)&=\frac{g}{m}\, \prod_{a=0}^{m-1}r_{i+a}(g)-\frac{g}{m}\, \prod_{a=0}^{m-1}r_{i+a-m+1}(g)\\
&=\frac{1}{m}\, r_i(g)\big(q_{i+1}(g)-q_{i-m+1}(g)\big)\\
\end{split}
\end{equation*}
which matches precisely the left hand side $dr_i(g)$ for our Ansatz \eqref{eq:dimsolu}. 
Since \eqref{eq:sysQder} determines $dr_i(g)$ and $dq_i(g)$ uniquely as power series in $g$, this proves \eqref{eq:dimsolu}
which is clearly equivalent to \eqref{eq:drimbis}. 

\section{ A combinatorial proof of the identity \eqref{eq:drimbis}}
\label{sec:proofdqim}
We give here a bijective proof of the identity
\begin{equation}\label{eq:dqi}
m\frac{d}{dg}q_i(g)=\pi_i(g)-\pi_{i-1}(g),\ \ \mathrm{where}\ \pi_i(g):=\prod_{a=0}^{m-1}r_{i+a}(g),
\end{equation}
which itself implies~\eqref{eq:drimbis} since 
\[
\!\!\!\!\!\!\!\!\!\!\!\! m\, g\, \frac{d}{dg}r_i(g)=\sum_{a=0}^{m-2}m\, g\, \frac{d}{dg}q_{i-a}(g)=g\big(\pi_i(g)-\pi_{i-m+1}(g)\big)=r_i(g)\big(q_{i+1}(g)-q_{i-m+1}(g)\big).
\]

We let $\cQ_i'$ be the family of maps in $\cQ_i$ with a secondary marked corner at a black vertex. Note that $m\frac{d}{dg} q_i(g)$ is the counting series of $\cQ_i'$ with $g$ conjugate to the number of black vertices minus $1$. For a map $M\in\cQ_i'$, we denote by $c_{\circ},c_{\bullet}$ the root corner and secondary marked corner, and by $v_{\circ},v_{\bullet}$  their respective incident vertices (so $v_{\circ}$ is the root vertex). The edges incident to $v_{\circ}$ (resp. $v_{\bullet}$) in clockwise order starting from $c_{\circ}$ (resp. $c_{\bullet}$) are denoted $e_0,\ldots,e_{m-1}$ (resp. $e_m,\ldots,e_{2m-1}$).  
Let $\tau(M)$ be the rooted map obtained by merging $v_{\circ}$ and $v_{\bullet}$ at their corners $c_{\circ},c_{\bullet}$, thereby creating a new vertex $v'$ of degree $2m$. Note that the edges are preserved by this operation, if any of them is marked it is kept marked with same orientation and multiplicity. As the root corner of $\tau(M)$ we take the one at $v'$ between $e_{2m-1}$ and $e_0$, so that in clockwise order around $v'$ (starting at the root corner) the incident edges are $e_0,\ldots,e_{m-1},e_m,\ldots,e_{2m-1}$, see Figure~\ref{fig:MtotauM}. 

\begin{figure}
\begin{center}
\includegraphics[width=12cm]{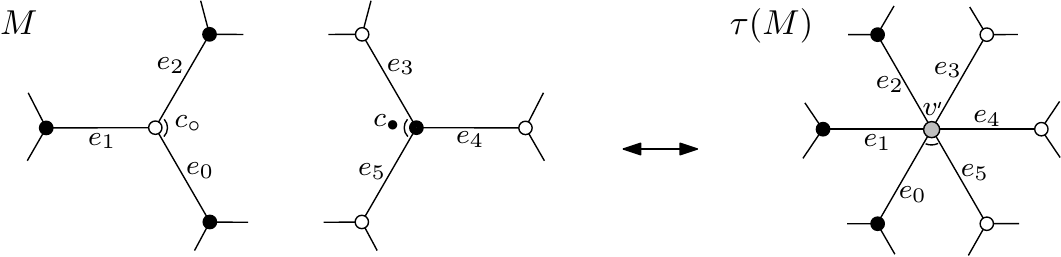}
\end{center}
\caption{The merging operation to obtain $\tau(M)$ from $M\in\cQ_i'$ (case $m=3$ here).}
\label{fig:MtotauM}
\end{figure}

We define a \emph{quasi-$m$-bipartite map} as a rooted map $M$ with a root vertex $v'$ of degree $2m$, which is considered gray, while the other vertices, all of degree $m$, are either black or white, such that there is no white-white edge nor black-black edge, and if we let $e_0,\ldots,e_{2m-1}$ be the edges incident to $v'$ in clockwise order starting from the root corner, then $e_0,\ldots,e_{m-1}$ lead to black vertices while $e_m,\ldots,e_{2m-1}$ lead to white vertices. 
A \emph{1-orientation} of $M$ is an orientation where the white vertices have outdegree $m-1$, the black vertices have outdegree~$1$, and the gray vertex has outdegree $m$. A marked quasi-$m$-bipartite map is called \emph{admissible} if 
it admits a compatible 1-orientation that is root-accessible, and such that the $m$ outgoing edges at the root vertex are $e_0,\ldots,e_{m-1}$, with $e_{m-1}$ unmarked. We let $\cF_i$ be the family of admissible marked quasi-$m$-bipartite maps with multiplicities adding up to less than $i$. 

\begin{figure}
\begin{center}
\includegraphics[width=5cm]{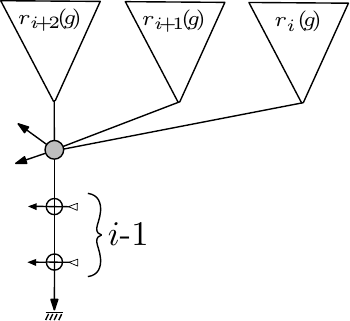}
\end{center}
\caption{The configuration of an $i$-enriched tree corresponding to a map in $\overline{\cF}_i$ (case $m=3$ here).}
\label{fig:tree_qiprime}
\end{figure}

\begin{lem}\label{lem:Qiprime}
Let $M'$ be a marked quasi-$m$-bipartite map with multiplicities adding up to less than $i$. Then $M'$ is of the form $\tau(M)$ for some (necessarily unique) $M\in\cQ_i'$ iff $M'\in\cF_i$. Hence $\tau$ is a bijection from $\cQ_i'$ to $\cF_i$ under which the number of black vertices is decreased by $1$. 
The counting series of $\cF_i$ is $\pi_i(g)-\pi_{i-1}(g)$, with $g$ conjugate to the number of black vertices.  
\end{lem}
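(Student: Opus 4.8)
The claim splits into the image characterization of $\tau$ (the ``iff''), the ensuing bijectivity with the black-vertex count dropping by one, and the enumeration of $\cF_i$. The structural part is immediate. For $M\in\cQ_i'$, merging the white root vertex $v_{\circ}$ and the marked black vertex $v_{\bullet}$ creates a degree-$2m$ vertex $v'$ whose incident edges $e_0,\ldots,e_{m-1}$ (coming from $v_{\circ}$) lead to black vertices and whose edges $e_m,\ldots,e_{2m-1}$ (coming from $v_{\bullet}$) lead to white vertices, so $\tau(M)$ is a marked quasi-$m$-bipartite map; one white and one black vertex disappear and $v'$ appears, so the number of black vertices drops by one, while the marked edges and their multiplicities are untouched. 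Conversely, any quasi-$m$-bipartite map is split back at $v'$ along its two seam corners into a white vertex carrying $e_0,\ldots,e_{m-1}$ and a black vertex carrying $e_m,\ldots,e_{2m-1}$, with the root corner recovered between $e_{m-1}$ and $e_0$ and the secondary corner between $e_{2m-1}$ and $e_m$. This makes $\tau$ injective with an explicit inverse, so the whole statement reduces to reconciling two admissibility conditions: that $M$ carries a compatible root-accessible $1$-orientation with $e_{m-1}$ (the edge preceding the root corner) ingoing at $v_{\circ}$, and that $\tau(M)$ carries a compatible root-accessible $1$-orientation whose outgoing edges at $v'$ are exactly $e_0,\ldots,e_{m-1}$, with $e_{m-1}$ unmarked.

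I plan to prove this equivalence by cycle reversal, in the spirit of Appendix~\ref{sec:driproof}. In the forward direction, take a compatible root-accessible $1$-orientation $O$ of $M$ with $e_{m-1}$ ingoing at $v_{\circ}$; by Lemma~\ref{lem:em} the edge $e_{m-1}$ is unmarked, and since, by construction, marked edges run from their white to their black endpoint, the unique outgoing edge $e_j$ of the black vertex $v_{\bullet}$ is unmarked as well. The induced orientation of $\tau(M)$ has $v'$ outgoing along $e_0,\ldots,e_{m-2}$ and $e_j$, so it suffices to flip $e_{m-1}$ and $e_j$. The decisive point is that $v_{\circ}$, being white, has a single ingoing edge, namely $e_{m-1}$ (Lemma~\ref{lem:mbiptree}); hence every unmarked oriented path reaching $v_{\circ}$ arrives through $e_{m-1}$, that is, through its black endpoint $b$. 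Root-accessibility then yields an unmarked path from the head $w$ of $e_j$ to $b$ avoiding $v'$ (using that $v_{\bullet}$ is left only through $e_j$), which closes with $e_j$ and $e_{m-1}$ into an unmarked directed cycle through $v'$; reversing it produces the desired orientation, and cycle reversal preserves root-accessibility. The converse is symmetric: starting from the orientation of $\tau(M)$ with $e_0,\ldots,e_{m-1}$ outgoing, a root-accessible path from $b$ enters $v'$ through some unmarked black-side edge $e_{j^{*}}$, and reversing the cycle $e_{m-1}\,(b\rightsquigarrow w^{*})\,e_{j^{*}}$ gives a $1$-orientation that splits into a compatible $1$-orientation of $M$ with $e_{m-1}$ ingoing at $v_{\circ}$; this same reversed cycle provides a directed path from $v_{\bullet}$ to $v_{\circ}$ in $M$, which promotes the split orientation to a root-accessible one, so $M$ is admissible and, its multiplicities being inherited, lies in $\cQ_i'$.

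To enumerate $\cF_i$ I follow the template of Lemma~\ref{lem:counting_fi}. Cutting the external edges of a map of $\cF_i$ endowed with its minimal $1$-orientation (the one with $e_0,\ldots,e_{m-1}$ outgoing at $v'$) turns $v'$ into a gray root node of degree $2m$: its $m$ outgoing edges $e_0,\ldots,e_{m-1}$ become opening leaves (with $e_0$ the root leaf) and its $m$ ingoing edges $e_m,\ldots,e_{2m-1}$ become the roots of $m$ subtrees, each rooted at a white node, which in an $i$-balanced tree are $h$-balanced for $h=i+m-1,\ldots,i$ and hence counted by $r_{i+m-1}(g),\ldots,r_i(g)$. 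Relaxing the conditions so as to allow $e_{m-1}$ to be marked or to be a loop at $v'$ defines the family $\overline{\cF}_i$, which is thereby identified with the set of all $i$-enriched gray-rooted trees and so has counting series $\pi_i(g)=\prod_{a=0}^{m-1}r_{i+a}(g)$ (the gray node carrying no weight). It then remains to check that the excluded configurations, where the opening leaf $e_{m-1}$ is marked or is matched with a closing leaf incident to $v'$ (the loop case), are counted by $\pi_{i-1}(g)$: analysing the matching index of $e_{m-1}$ exactly as in Section~\ref{sec:interpr_ri} and Lemma~\ref{lem:counting_fi}, these cases realise the substitution $\pi_i\to\pi_{i-1}$. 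Subtracting yields the counting series $\pi_i(g)-\pi_{i-1}(g)$ of $\cF_i$.

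The main obstacle is the admissibility equivalence: the orientation surgery must stay within unmarked edges, and root-accessibility must be shown to survive both the cycle reversal and the merge/split. The enabling structural facts are that, in any $1$-orientation, a white vertex of an $m$-regular bipartite map has a unique ingoing edge and a black vertex a unique outgoing edge (Lemmas~\ref{lem:ori_mbip} and~\ref{lem:mbiptree}); these force every path toward $v_{\circ}$ through $b$ and every path out of $v_{\bullet}$ through $e_j$, which is exactly what makes the relevant cycles well defined. The bookkeeping of the marked/loop configurations in the enumeration is the other delicate point, but it runs parallel to Lemma~\ref{lem:counting_fi}.
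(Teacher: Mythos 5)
Your proof follows essentially the same route as the paper's: the merge/split surgery at $v'$, the transfer of admissibility in both directions by reversing an unmarked directed cycle through $v'$ supplied by root-accessibility (with the first/last edges of the accessibility path being forced by the outdegree-$1$ and indegree-$1$ constraints at $v_{\bullet}$ and $v_{\circ}$), and the enumeration of $\cF_i$ by identifying $\overline{\cF}_i$ with gray-rooted $i$-enriched trees counted by $\pi_i(g)$ and subtracting the marked-$e_{m-1}$ contribution $\pi_{i-1}(g)$. The only (harmless) deviation is the extra ``loop case'' you import from the Eulerian analogue of Lemma~\ref{lem:counting_fi}: it is vacuous here, since a quasi-$m$-bipartite map has no loop at the gray root vertex (and the gray node carries no closing leaves), so only the marked-$e_{m-1}$ configurations actually account for $\pi_{i-1}(g)$.
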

\begin{proof}
Let $M\in\cQ_i'$. By definition, $M$ admits a compatible 1-orientation $O$ where the unique ingoing edge at $v_{\circ}$ is $e_{m-1}$. This edge has to be unmarked, otherwise $v_{\circ}$ could not be reached from any other vertex using a directed path of unmarked edges. The orientation $O$ induces a compatible 1-orientation $O'$ of $\tau(M)$, such that $e_{0},\ldots,e_{m-2}$ are outgoing and $e_{m-1}$ is ingoing at the gray vertex $v'$ of $M'$. Clearly this orientation is root-accessible. By accessibility, in $O$ there is an oriented path $P$ of unmarked edges from $v_{\bullet}$ to $v_{\circ}$. In $\tau(M)$ this path becomes a directed cycle $C$ of $O'$ that can be reversed, yielding a compatible (and also root-accessible) 1-orientation $O''$ of $\tau(M)$ such that the $m$ outgoing edges at the gray vertex are $e_0,\ldots,e_{m-1}$. Hence $\tau(M)\in\cF_i$.

Conversely, let $M'\in\cF_i$. By definition $M'$ admits a compatible root-accessible 1-orientation $O''$ such that the $m$ outgoing edges at the root vertex $v'$ are $e_0,\ldots,e_{m-1}$, and $e_{m-1}$ is unmarked. Letting $u$ be the extremity of $e_{m-1}$, by accessibility there exists a directed path $P'$ of unmarked edges from $u$ to $v'$. Hence $e_{m-1}\cup P'$ forms a directed cycle $C$ of unmarked edges. Reversing the orientation of $C$ and splitting the root vertex $v'$ using the operation in Figure~\ref{fig:MtotauM} from right to left, we obtain a marked $m$-regular bipartite map $M$ with multiplicities adding up to less than~$i$, and  endowed with a compatible 1-orientation $O$ such that the unique ingoing edge at the root vertex $v_{\circ}$ is $e_{m-1}$. In addition  $O$ is root-accessible (indeed the presence of the directed cycle $C$ in $O''$ ensures that, in $O$, $v_{\circ}$ can be accessed from $v_{\bullet}$ by a directed path of unmarked edges). Hence $M\in\cQ_i'$ and $M'=\tau(M)$.  
 
We now show that the counting series of $\cF_i$ is $\pi_i(g)-\pi_{i-1}(g)$. We let $\overline{\cF}_i$ be the family defined as $\cF_i$ but allowing $e_{m-1}$ to be marked. Let $M'\in\overline{\cF}_i$, and let $O_{\mathrm{min}}$ be the canonical 1-orientation of $M'$, and $T$ its canonical spanning tree. By Lemma~\ref{lem:outroot} the $m$ outgoing edges at the root vertex $v'$ of $M'$ are $e_0,\ldots,e_{m-1}$, hence are external edges. We obtain the corresponding marked blossoming tree (with
multiplicities adding up to less than $i$) by cutting the external edges at their middles, and transform this marked tree into an $i$-enriched tree  of the form shown in Figure~\ref{fig:tree_qiprime} by using a construction similar to that of Section~\ref{sec:interpr_ri}. Thus the counting series of $\overline{\cF}_i$ is $\pi_i(g)$, with $g$ conjugate to the number of black vertices (black nodes in the blossoming tree). Similarly as in Lemma~\ref{lem:counting_fi}, requiring that $e_{m-1}$ is marked is the same as requiring that the opening leaf arising from $e_{m-1}$ is not matched to a non-artificial closing leaf.  This amounts to decreasing $i$ by $1$ (we can replace the
leaf arising from $e_{m-1}$ by the first artificial opening leaf along the extended branch). Hence the counting series for maps in $\overline{\cF}_i$ where $e_{m-1}$ is marked is $\pi_{i-1}(g)$. By subtraction we conclude that the counting series of $\cF_i$ is $\pi_i(g)-\pi_{i-1}(g)$, with $g$ conjugate to the number of black vertices. 
\end{proof}

Lemma~\ref{lem:Qiprime} then directly yields the identity~\eqref{eq:dqi}, since it ensures that the counting series of $\cQ_i'$, which is 
$m\frac{d}{dg}q_i(g)$ (with $g$ conjugate to the number of black vertices minus $1$) is equal to the counting series of $\cF_i$, which is $\pi_i(g)-\pi_{i-1}(g)$ (with $g$ conjugate to the number of black vertices).

\section*{Acknowledgements} 
We thank S\'everin Charbonnier and Philippe Di Francesco for interesting discussions. \'EF is partially supported by the  project ANR-16-CE40-0009-01 (GATO) and the project ANR-19-CE48-011-01 (COMBIN\'E).

\bibliographystyle{plain}
\bibliography{Allgenus}

\end{document}